\theoremstyle{plain}
	\newtheorem{theorem}{Theorem}[section]
	\newtheorem{lemma}[theorem]{Lemma}
	\newtheorem{corollary}[theorem]{Corollary}
	\newtheorem{proposition}[theorem]{Proposition}
	\newtheorem{remark}[theorem]{Remark}
\theoremstyle{plain}
\def\R{\mathbb{R}}
\def\calC{\mathcal{C}}
\def\calZ{\mathcal{Z}}
\def\e{\varepsilon}
\def\6{\partial}
\def\8{\infty}
\def\tu{\tilde{u}}
\def\tz{\tilde{z}}
\def\tw{\tilde{w}}
\def\ttn{\tilde{t}_n}
\def\tz{\tilde{z}}
\def\bu{\bar{u}}
\newcommand{\xRightarrow}[2][]{%
\ext@arrow 0055{\Rightarrowfill@}{#1}{#2}%
}
\def\Rightarrowfill@{\arrowfill@\Relbar\Relbar\Rightarrow}
\newcommand{\xLeftarrow}[2][]{%
\ext@arrow 0055{\Leftarrowfill@}{#1}{#2}%
}
\def\Leftarrowfill@{\arrowfill@\Leftarrow\Relbar\Relbar}
\newcommand{\xLongleftrightarrow}[2][]{%
\ext@arrow 0055{\llrafill@}{#1}{#2}%
}
\def\llrafill@{\arrowfill@\Leftarrow\Relbar\Rightarrow}
\begin{document}

\title[Single point rupture solutions]{Radial single point rupture solutions\\ for a general MEMS model\\}

\author{Marius Ghergu}
\thanks{}
\address{School of Mathematics and Statistics, University College Dublin, Belfield, Dublin 4, Ireland}
\address{Institute of Mathematics Simion Stoilow of the Romanian Academy, 21 Calea Grivitei St., 010702 Bucharest, Romania}
\email{marius.ghergu@ucd.ie}

\author{Yasuhito Miyamoto}
\thanks{The second author was supported by JSPS KAKENHI Grant Numbers 19H01797, 19H05599.}
\address{Graduate School of Mathematical Sciences, The University of Tokyo, 3-8-1 Komaba, Meguro-ku, Tokyo 153-8914, Japan}
\email{miyamoto@ms.u-tokyo.ac.jp}

\begin{abstract}
We study the initial value problem
$$
\begin{cases}
r^{-(\gamma-1)}\left(r^{\alpha}|u'|^{\beta-1}u'\right)'=\frac{1}{f(u)} & \textrm{for}\ 0<r<r_0,\\
u(r)>0 & \textrm{for}\ 0<r<r_0,\\
u(0)=0,
\end{cases}
$$
for $\gamma>\alpha>\beta\geq 1$ and  $f\in C[0,\bar u)\cap C^2(0,\bar u)$, $f(0)=0$, $f(u)>0$ on $(0, \bar u)$ and $f$ satisfies certain assumptions which include the standard case of pure power nonlinearities encountered in the study of Micro-Electromechanical Systems (MEMS).
We obtain the existence and uniqueness of a solution $u^*$ to the above problem, the rate at which it approaches the value zero at the origin and the intersection number of points with the corresponding regular solutions $u(\,\cdot\,,a)$ (with $u(0,a)=a$) as $a\to 0$.

In particular, these results yield the uniqueness of a radial single point rupture solution and other qualitative properties for MEMS models. The bifurcation diagram is also investigated.

\end{abstract}

\date{\today}
\subjclass{}
\subjclass[2010]{primary 34A12, 35B40,  secondary 35B32, 35J62.}
\keywords{}
\keywords{MEMS equation; Rupture solution; Uniqueness; Intersection number; Infinitely many turning points}
\maketitle

\noindent
\section{Introduction and Main results}
In this paper we are concerned with the following initial value problem
\begin{equation}\label{S1E1}
\begin{cases}
r^{-(\gamma-1)}\left(r^{\alpha}|u'|^{\beta-1}u'\right)'=\frac{1}{f(u)} & \textrm{for}\ 0<r<r_0,\\
u(r)>0 & \textrm{for}\ 0<r<r_0,\\
u(0)=0,
\end{cases}
\end{equation}
where $\gamma>\alpha>\beta\geq 1$ and for some $\bar u>0$ we have
$$
f\in C^2(0,\bar u)\cap C[0,\bar u),\quad f(0)=0\quad \textrm{and}\ \ f>0\ \textrm{on}\ (0,\bar u).
$$
We also assume that $g(u)=f(u)^{1/\beta}$ satisfies:
\begin{enumerate}
\item[(G1)] $
g'(u)>0,\ \ g''(u)>0\ \ \textrm{for all}\ u\in(0,\bu).$

\item[(G2)] There exists $\displaystyle q:=\lim_{u\to 0}\frac{g'(u)^2}{g(u)g''(u)}$.
\item[(G3)] If $q=1$, then $\displaystyle\frac{g'(u)^2}{g(u)g''(u)}\ge 1$ for small $u>0$.
\end{enumerate}
We shall see that $q\ge 1$ and there exists
\begin{equation}\label{S1E3}
L:=\lim_{u\to 0}\frac{g'(u)G(u)}{g(u)^2}\quad\textrm{and}\quad L\in (\frac{1}{2},1],
\end{equation}
where $G(u)=\int_0^ug(s)ds$.

Note that the following choice of $\alpha$, $\beta$, $\gamma$ cover the standard setting of Laplace, $p$-Laplace and $k$-Hessian operators as follows:
\medskip

\begin{center}
\begin{tabular}{lccc}
 & $\alpha$ & $\beta$ & $\gamma$\\
 \hline\hline
        Laplace $\quad\qquad$& $N-1$\; & $1$ & \;\;$N$\\
     
$p$-Laplace $\quad\qquad$ & $N-1$\; & \;$p-1$\; & \;\;$N$\\

$k$-Hessian $\quad\qquad$ & $N-k$\; & $k$ & \;\;$N$\\
\hline\hline
\end{tabular}
\end{center}
\medskip

Our study of \eqref{S1E1} is motivated by the elliptic problem
\begin{equation}\label{S1Em}
\begin{cases}
-\Delta v=\frac{\lambda|x|^\sigma}{(1-v)^m} & \textrm{in}\ \Omega,\\
v=0 & \textrm{on}\ \partial \Omega,\\
0<v\leq 1 & \textrm{in}\ \Omega,
\end{cases}
\end{equation}
which arises in the mathematical modelling of Micro-Electromechanical Systems (MEMS), a technology that designs various types of microscopic devices by combining electronic units with micro-size mechanical components. In a simplified setting, such devices consist of two parallel plates of which one is fixed at level $v=1$ and has zero voltage while the other plate is a thin dielectric membrane with fixed boundary at level $v=0$. A voltage directly proportional to $\lambda>0$ is applied  and the membrane deflects towards the rigid plate. If the voltage exceeds a certain critical value $\bar{\lambda}$ called a pull-in voltage, then an instability occurs in the process. For an introduction into MEMS modelling, the reader is referred to the monograph \cite{PB03}.  A mathematical account on this topic is provided in \cite{EGG10}. Similar problems to \eqref{S1Em} which involve different differential operators are discused in \cite{CES08} (for the $p$-Laplace operator) and in \cite{CFT11, LW14, LW17} (for the biharmonic operator). It is easy to see that if $v$ is a radial solution of \eqref{S1Em} in $\Omega=B_1$, then $u(\sqrt{\lambda}r):=1-v(r)$ satisfies (\ref{S1E1}) with $f(u)=u^m$, $\alpha=N-1$, $\beta=1$, $\gamma=N+\sigma$ and $r_0=\sqrt{\lambda}$.  

One key research topic in relation to \eqref{S1Em} is the study of {\it rupture solutions}, that is, positive solutions of \eqref{S1Em} for which the rupture set defined as $\{v=1\}$ is nonempty. Rupture solutions for \eqref{S1Em} are investigated in \cite{DW12, DWW16,GW08b, GW08c, GW14, LGG05}. In \cite{DWW16} the authors study the rupture set and its Hausdorf dimension for MEMS equation in dimension $N\geq 2$. The rupture solutions of a MEMS equation with fringing field is investigated in \cite{DW12}. The authors in \cite{LGG05} obtained the existence of a single point rupture solution for
$$
\begin{cases}
\Delta u =\frac{1}{f(u)}\,,\ u>0 & \quad\mbox{ in }B_R\setminus\{0\},\\
u(0)=0,&
\end{cases}
$$
where $f\in C^1(0, \infty)$ satisfies
\begin{equation}\label{S1Ee}
\int_u^\infty \frac{ds}{f(s)}\leq \frac{c u}{f(u)}\quad\mbox{ for all }0<u<\delta,
\end{equation}
where $c, \delta>0$. Let us observe that \eqref{S1Ee} is stronger than conditions $(G1)$-$(G3)$ above; for instance $f(u)=e^{-1/u^m}$, $m>0$, and $f(u)=\exp\{-e^{1/u}\}$ satisfy $(G1)$-$(G3)$ (see Examples 4 and 5 in the next section) but not \eqref{S1Ee}.

The parabolic counterpart of \eqref{S1Em} namely
\begin{equation}\label{S1Ep}
\begin{cases}
\partial_t v-\Delta v=\frac{\lambda|x|^\sigma}{(1-v)^m} & \textrm{in }\ \Omega\times (0, T),\\
v=0 & \textrm{on }\ \partial \Omega\times (0, T),\\
v(x,0)=0&  \textrm{in }\ \Omega,
\end{cases}
\end{equation}
has been considered in \cite{ES18, ES19, GS15, GPW05}. In such a setting, solutions $v$ of \eqref{S1Ep} which take the value 1 at some point inside of $\Omega$ are called {\it touchdown solutions}.

For any $a>0$ denote by $u(r,a)$ the solution of
\begin{equation}\label{S1E4}
\begin{cases}
r^{-(\gamma-1)}\left(r^{\alpha}|u'|^{\beta-1}u'\right)'=\frac{1}{f(u)} & \textrm{for}\ 0<r<r_0,\\
u(r)>0 & \textrm{for}\ 0<r<r_0,\\
u(0)=a
\end{cases}
\end{equation}
and we call $u(r,a)$ a regular solution of (\ref{S1E4}). The existence of regular solutions $u(\,\cdot\,, a)\in C^2(0, r_0)\cap C[0, r_0)$, for $r_0>0$ small, can be achieved using the same techniques as in \cite[Proposition 2.1]{M16}. 

In the present work we shall be interested in qualitative properties of solutions to \eqref{S1E1}. We obtain that for some small $r_0>0$, problem \eqref{S1E1} has a unique solution $u^*$ which is the  limit of regular solutions $u(\,\cdot\,, a)$ of \eqref{S1E4} as $a\to 0$. Uniqueness, regularity and behavior  at the origin of the solution $u^*$ as well as the intersection number of points between $u^*$ and $u(\,\cdot\,, a)$ are also discussed. In particular, for specific values of exponents $\alpha, \beta$ and $\gamma$ we deduce the uniqueness of single point rupture solution to \eqref{S1Em} as well as to its $p$-Laplace counterpart (see Section 2 below for more relevant examples). As remarked in \cite{EG08}, uniqueness is not an easy task even for regular (i.e., non-rupture) solutions to \eqref{S1Em}.

For a continuous function $h(r)$ defined on the interval $I\subset \R$, we introduce the zero number of $h$ on $I$ by
\[
\calZ_I[h(\,\cdot\,)]=\sharp\{r\in I\ |\ h(r)=0\}.
\]
Our main result is stated below.
\begin{theorem}\label{THA}
Assume $\gamma>\alpha>\beta\ge 1$.
Then, the following hold: 
\begin{enumerate}
\item[(i)] There exists a unique solution $u^*$ of (\ref{S1E1}).
Moreover, $u^*$ satisfies:
\begin{equation}\label{E5}
u^*(r)=G^{-1}\left[\frac{r^{\theta}}{A}(1+o(1))\right]\ \ \textrm{as}\ \ r\to 0,
\end{equation}
where
\begin{equation}\label{E6}
\theta=1+\frac{\gamma-\alpha}{\beta}>1,
\end{equation}
and
\begin{equation}\label{E7}
A=\theta\left\{\gamma-L(\gamma-\alpha+\beta)\right\}^{1/\beta}>0.
\end{equation}
\item[(ii)] $u^*$ is of class $C^1$ at origin if and only if $\lim_{s\to 0}\frac{ s^{\theta}}{G(s)}$ is finite.
In this case,
\begin{equation}\label{S1E8}
u'(0)=\left[\frac{1}{A} \lim_{s\to 0}\frac{ s^{\theta}}{G(s)}\right]^{1/\theta}.
\end{equation}
In particular, if 
\begin{equation}\label{S1E9}
\frac{\gamma-\alpha}{\beta}<
\begin{cases}
\frac{q}{q-1} & \textrm{if}\ q>1,\\
\infty & \textrm{if}\ q=1,
\end{cases}
\end{equation}
then, $u^*$ is not of class $C^1$ at the origin. 
\item[(iii)] We have
\begin{equation}\label{S1Econv}
u(r,a)\to u^*\ \ \textrm{in}\ \ C^2_{loc}(0,r_0)\cap C_{loc}[0,r_0)\ \ \textrm{as}\ \ a\to 0,
\end{equation}
where $u(r,a)$ is the regular solution of (\ref{S1E4}).
\item[(iv)] 
Let
\begin{equation}\label{qc}
q_c:=\frac{1}{2}+\frac{\theta(\beta+1)^2}{4(\sqrt{\tau}-\sqrt{\theta})^2-2\theta(\beta+1)^2},
\end{equation}
where $\tau:=\beta\gamma+\alpha-\beta>1$.
If $q<q_c$, then, for each $\rho\in(0,r_0)$, we have
$$
\calZ_{(0,\rho)}\left[u(\,\cdot\,,a)-u^*(\,\cdot\,)\right]\to\infty\ \ \textrm{as}\ \ a\to 0.
$$
\end{enumerate}
\end{theorem}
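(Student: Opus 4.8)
The plan is to reduce \eqref{S1E1} to an integral equation, construct $u^*$ as a monotone limit of the regular solutions (which simultaneously yields part (iii)), pin down its rate at the origin by a self-consistency computation, and then analyze the oscillatory approach of the regular solutions to $u^*$ through the linearized equation. Since $u>0$ with $u(0)=0$ forces $u'>0$ near the origin, we may write $|u'|^{\beta-1}u'=(u')^\beta$ and integrate once to get $r^\alpha(u')^\beta=\int_0^r s^{\gamma-1}/f(u(s))\,ds$, i.e. $u'=\big(r^{-\alpha}\int_0^r s^{\gamma-1}/f(u)\,ds\big)^{1/\beta}$. Comparison in the initial value shows $a\mapsto u(\,\cdot\,,a)$ is ordered and the family is locally bounded, so $u(\,\cdot\,,a)$ converges as $a\to0$; standard ODE regularity upgrades this to convergence in $C^2_{loc}(0,r_0)\cap C_{loc}[0,r_0)$, which is \eqref{S1Econv}, and the limit solves \eqref{S1E1}.

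For the rate \eqref{E5} I would insert the ansatz $G(u^*(r))=\frac{r^{\theta}}{A}(1+o(1))$, with $\theta$ as in \eqref{E6}, into the integral equation. Changing the integration variable from $s$ to $v=u^*(s)$ turns $\int_0^r s^{\gamma-1}/f(u^*(s))\,ds$ into $\frac{A^{\gamma/\theta}}{\theta}\int_0^w G(v)^{\gamma/\theta-1}g(v)^{-(\beta-1)}\,dv$ with $w=u^*(r)$, and one application of L'Hopital's rule (using $g'G/g^2\to L$ from \eqref{S1E3}) evaluates the last integral as $\frac{1}{\theta(\gamma/\theta-\beta L)}\,G(w)^{\gamma/\theta}g(w)^{-\beta}(1+o(1))$. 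Matching this against $r^\alpha(u')^\beta$ forces $A^\beta=\theta^{\beta+1}(\gamma/\theta-\beta L)=\theta^\beta\{\gamma-L(\gamma-\alpha+\beta)\}$, which is \eqref{E7}. To make the ansatz rigorous I would exhibit ordered sub- and supersolutions of the form $G^{-1}[(1\pm\varepsilon)r^{\theta}/A]$ on a small interval and invoke the comparison principle for the integral operator; the same ordering forces every solution into this asymptotic class, giving the uniqueness in part (i). Part (ii) is then immediate: from $u'(r)=\big(r^{-\alpha}\int_0^r s^{\gamma-1}/f\,ds\big)^{1/\beta}$ together with \eqref{E5}, the limit $u'(0^+)$ exists iff $\lim_{s\to0}s^{\theta}/G(s)$ is finite, which gives \eqref{S1E8}; since the growth of $G$ relative to $s^{\theta}$ is controlled by $q$ (via $L=q/(2q-1)$), a direct comparison shows \eqref{S1E9} is equivalent to $s^{\theta}/G(s)\to\infty$, so $u^*\notin C^1$.

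The heart of the theorem is part (iv). Setting $w_a:=u(\,\cdot\,,a)-u^*$ and linearizing, $w_a$ solves a linear second-order equation $(r^\alpha p_a(r)w_a')'+r^{\gamma-1}c_a(r)w_a=0$, where $p_a\to\beta(u^{*\prime})^{\beta-1}$ and $c_a\to f'(u^*)/f(u^*)^2$ locally uniformly on $(0,\rho]$ as $a\to0$. Using \eqref{E5} for $u^*$, the limiting operator becomes, after $t=\log r$, asymptotically a constant-coefficient (Euler) equation whose indicial polynomial is a quadratic in the exponent; a computation using $L=q/(2q-1)$ shows its discriminant is negative precisely when $q<q_c$, with $q_c$ as in \eqref{qc} (the radicands $\sqrt{\tau}$ and $\sqrt{\theta}$ recording the exponents attached to the flux and the source terms, and $\tau=\beta\gamma+\alpha-\beta$). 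Complex indicial roots mean the limiting equation is oscillatory at $r=0$, so each of its nontrivial solutions has infinitely many zeros accumulating at the origin.

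The main obstacle is to transfer this oscillation to $w_a$ uniformly in $a$. For fixed $a>0$ the regular solution is smooth at $r=0$, so $w_a$ has only finitely many zeros in $(0,\rho)$; what must be shown is that this finite count tends to $\infty$. I would argue by Sturm comparison: on each interval $[\delta,\rho]$ the coefficients of the $w_a$-equation converge to those of the oscillatory limit equation, so the number of zeros of $w_a$ in $[\delta,\rho]$ is at least the (finite) number for the limit equation minus a fixed error, once $a$ is small enough; letting $\delta\to0$ the limit equation contributes arbitrarily many zeros, and a diagonal choice $a=a(\delta)\to0$ then produces at least $N$ zeros of $w_a$ in $(0,\rho)$ for every prescribed $N$. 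The delicate points are that $p_a$ and $c_a$ both degenerate as $r\to0$ and depend on $a$, so one must quantify the shadowing of $u^*$ by $u(\,\cdot\,,a)$ over a $t$-interval whose length grows without bound as $a\to0$, and control the nonautonomous error carefully enough to guarantee genuine sign changes, rather than merely small oscillations, of $w_a$ on that interval.
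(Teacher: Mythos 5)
Your proposal contains genuine gaps at the two hardest points of the theorem, and one of its claims actually contradicts the theorem it is trying to prove. First, the assertion that ``comparison in the initial value shows $a\mapsto u(\,\cdot\,,a)$ is ordered'' is false in general: if the family were ordered, then letting $a\to 0$ would give $u(\,\cdot\,,a)\ge u^*$ for every $a$, so any zero $r_*$ of $u(\,\cdot\,,a)-u^*$ would be an interior tangency with $u'(r_*,a)={u^*}'(r_*)$; since at such a point the equation is a regular second--order ODE (there $u>0$, $u'>0$, $f\in C^2$), Cauchy--Lipschitz uniqueness would force $u(\,\cdot\,,a)\equiv u^*$, i.e.\ $\calZ_{(0,\rho)}[u(\,\cdot\,,a)-u^*]=0$. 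That is incompatible with part (iv), which asserts this number tends to infinity when $q<q_c$. The paper avoids monotonicity altogether: existence comes from Arzel\`a--Ascoli and a diagonal argument, and the full convergence \eqref{S1Econv} in (iii) is recovered from uniqueness, not from ordering. Second, your uniqueness argument is a non sequitur: knowing that every solution satisfies \eqref{E5} only pins down the leading term, and two solutions can share the same leading asymptotics. The paper needs two substantial ingredients you do not supply: (a) the rate \eqref{E5} itself is proved not by barrier matching but by the Emden-type change of variables \eqref{S4E26} and a full phase-plane analysis of \eqref{S4L4E29} (Lyapunov functionals and Poincar\'e--Bendixson, Lemmas \ref{S4L6}--\ref{S4L10}), whose whole point is to exclude that $G(u(r))/r^{\theta}$ oscillates without converging --- a scenario your sub/supersolution sketch does not rule out, since at $r=0$ both barriers and the solution vanish and there is no anchor point for comparison; (b) uniqueness then requires showing that the difference $z=z_2-z_1$ of the Emden variables of two solutions satisfies $z_{tt}-C(t)z_t+D(t)z=0$ with $C(t),D(t)$ converging to positive constants, and invoking Proposition~\ref{P1} (from \cite{LLD00}) that bounded solutions of such equations vanish identically (Theorem~\ref{S4T1}).

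For part (iv) you correctly identify, but do not resolve, the crux: the zeros of $u(\,\cdot\,,a)-u^*$ do not sit in a fixed annulus $[\delta,\rho]$, they accumulate near the scale $r\sim\lambda=(G[a]/G_q[1])^{1/\theta}\to 0$, and on any fixed $[\delta,\rho]$ the limiting linearized (Euler-type) equation has only finitely many zeros, so Sturm comparison on compacta plus a diagonal choice $a=a(\delta)$ cannot by itself produce arbitrarily many \emph{genuine sign changes}; one would need uniform-in-$a$ shadowing of $u^*$ by $u(\,\cdot\,,a)$ over $t$-intervals of diverging length, precisely the step you list as an obstacle. The paper's resolution is structurally different and sidesteps this: the transformation \eqref{quasi} rescales $u(\,\cdot\,,a)$ and $u^*$ simultaneously, so that $\calZ_{(0,\rho)}[u(\,\cdot\,,a)-u^*]=\calZ_{(0,\rho/\lambda)}[\tu-\tu^*]$, and by Lemmas \ref{S5L1} and \ref{S5L2} both rescaled functions converge, locally uniformly on \emph{fixed} compact $s$-intervals, to solutions $v(\,\cdot\,,1)$ and $v^*$ of the scale-invariant limit problem \eqref{S1E10}--\eqref{S1E11}; Proposition~\ref{S1P1} supplies $\calZ_{(0,\infty)}[v(\,\cdot\,,1)-v^*]=\infty$ when $q<q_c$, and these intersections persist under locally uniform convergence and transfer back. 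In short, the oscillation is counted in the blow-up variable where only compact sets are needed, rather than in the original variable where the window grows; without this (or an equivalent quantitative shadowing estimate), your outline for (iv) remains incomplete.
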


\begin{remark}
(i) Since $q\geq 1$ (see Lemma \ref{S3L1}~(i)) we see that condition $q_c>q$ in Theorem~\ref{THA}~(iv) implies in particular $q_c>1$ which is equivalent to $\alpha-\beta<4\theta$.\\
(ii) In the Laplace case, that is $\alpha=N-1$, $\beta=1$, $\gamma=N$, we have
$$q_c=\frac{1}{2}+\frac{1}{N-2-2\sqrt{N-1}}.$$
Hence, $\gamma>\alpha>\beta\ge 1$ and $q_c>1$ hold if and only if $2<N<10$.
\end{remark}

We define $g_q(u)$ and $G_q(u)$ as follows:
$$
g_q(u)=
\begin{cases}
e^u & \textrm{if}\ q=1,\\
u^p,\ \frac{1}{p}+\frac{1}{q}=1, & \textrm{if}\ q>1,
\end{cases}
\qquad
G_q(u)=
\begin{cases}
e^u & \textrm{if}\ q=1,\\
\frac{u^{p+1}}{p+1},\ \frac{1}{p}+\frac{1}{q}=1, & \textrm{if}\ q>1.
\end{cases}
$$
Let us consider the problem
\begin{equation}\label{S1E10}
\begin{cases}
s^{-(\gamma-1)}\left(s^{\alpha}|v'|^{\beta-1}v'\right)'=\frac{1}{g_q(v)^{\beta}}, & s>0,\\
v(0)=b>0,\\
v'(0)=0.
\end{cases}
\end{equation}
Then
\begin{equation}\label{S1E11}
v^*(s)=G_q^{-1}\left[A^{-1}s^{\theta}\right]
\end{equation}
is an exact solution of the equation in (\ref{S1E10}), where $A=\theta\{\gamma-L(\gamma-\alpha+\beta)\}^{1/\theta}$, $\theta=1+(\gamma-\alpha)/\beta$ and $L=q/(2q-1)$.
Note that $v^*(s)$ vanishes at the origin only if $q>1$.
If $q=1$, then $v^*(0)=-\infty$ and we will use
$g_1(v)$ and $G_1(v)$ in our approach to prove Theorem~\ref{THA}~(iv).

As an application of Theorem~\ref{THA} we consider the following bifurcation problem:
\begin{equation}\label{v}
\begin{cases}
s^{-(\gamma-1)}(s^{\alpha}|v'|^{\beta-1}v')'+\frac{\lambda}{f(1-v)}=0 & \textrm{for}\ 0<s<1,\\
v(s)>0 & \textrm{for}\ 0<s<1,\\
v(1)=0,
\end{cases}
\end{equation}
where $\lambda>0$.
Let $g=f^{1/\beta}$.
Here, in addition to (G1)--(G3) we assume the following:
\medskip

\begin{enumerate}
\item[(G4)] $g'(u)>0\ \ \textrm{for}\ \ 0<u<\infty$.
\end{enumerate}
\medskip

The nonlinear term $\lambda/f(1-v)$ has a singularity at $v=1$.
If $0\le v<1$ in $[0,1]$, then the solution $v$ is regular.
It is known that the set of the regular solutions $\calC:=\{(\lambda,v)\}$ can be parametrized by $\tau:=v(0)$, and that $\calC$ emanates from $(0,0)$.
By the results in \cite{K97}, $\calC$ can be expressed as $\calC:=\{(\lambda(\tau),v(s,\tau))\}$.
Using Theorem \ref{THA} we obtain the existence and uniqueness of a pair $(\bar{\lambda}, \bar{v})$ such that $\bar{v}$ is the (unique) solution of \eqref{v} with $\lambda=\bar{\lambda}$ and $\bar{v}(0)=1$. We shall call $\bar{v}$ a degenerate solution of \eqref{v}.  More, precisely, we have:
\begin{corollary}\label{C0}
The following hold:
\begin{enumerate}
\item[(i)] The problem (\ref{v}) has a unique degenerate solution $(\bar{\lambda},\bar{v})$.
Let 
$$
\calC:=\{(\lambda(\tau),v(s,\tau))|\ v(0, \tau)=\tau,\ 0<\tau<1\}
$$ 
be the bifurcation curve.
Then, as $\tau\to 1$ one has
\begin{equation}\label{C0E0}
\lambda(\tau)\to\bar{\lambda}\quad\textrm{and}\quad v(s,\tau)\to\bar{v}(s)\ \textrm{in}\ C[0,1].
\end{equation}
\item[(ii)] Let $q_c$ be defined by (\ref{qc}).
If $q<q_c$, then the curve $\calC$ has infinitely many turning points around $\bar{\lambda}$, and hence (\ref{v}) with $\lambda=\bar{\lambda}$ has infinitely many regular solutions.
\end{enumerate}
\end{corollary}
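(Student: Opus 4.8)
The plan is to reduce both parts to the exact scaling relation between \eqref{v} and the initial value problem \eqref{S1E4}, and then to feed Theorem~\ref{THA} into it. First I would record the homogeneity: if $u(\,\cdot\,,a)$ is the regular solution of \eqref{S1E4} and $\mu>0$, a direct computation shows that $w(s):=u(\mu s,a)$ satisfies $s^{-(\gamma-1)}(s^{\alpha}|w'|^{\beta-1}w')'=\mu^{\beta\theta}/f(w)$, since the operator scales with weight $\gamma+\beta-\alpha=\beta\theta$ under $s\mapsto\mu s$. Hence $v(s):=1-u(\mu s,a)$ solves the equation in \eqref{v} with $\lambda=\mu^{\beta\theta}$, it meets the boundary condition $v(1)=0$ exactly when $u(\mu,a)=1$, and $v(0)=1-a$. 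Because $\gamma>\alpha$ forces $r^{\alpha}|u'|^{\beta-1}u'$ to be positive and increasing, each $u(\,\cdot\,,a)$ is strictly increasing and, under the standing hypotheses, attains the value $1$ at a unique radius $\mu(a)$; the same holds for $u^*$ at a unique $\bar\mu$. This identifies the branch as $\lambda(\tau)=\mu(a)^{\beta\theta}$, $v(s,\tau)=1-u(\mu(a)s,a)$ with $a=1-\tau$, and singles out the candidate degenerate solution $\bar\lambda=\bar\mu^{\beta\theta}$, $\bar v(s)=1-u^*(\bar\mu s)$.

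For part (i), that $\bar v$ is a degenerate solution is immediate from the scaling together with $u^*(0)=0$. For uniqueness, any degenerate solution $v$ produces, after undoing the scaling by $\mu=\lambda^{1/(\beta\theta)}$, a solution of \eqref{S1E1} vanishing at the origin; by Theorem~\ref{THA}(i) this must be $u^*$, and strict monotonicity of $u^*$ then forces $\mu=\bar\mu$, whence $\lambda=\bar\lambda$ and $v=\bar v$. The convergence \eqref{C0E0} follows from Theorem~\ref{THA}(iii): as $\tau\to1$ (that is $a\to0$) we have $u(\,\cdot\,,a)\to u^*$ in $C_{loc}[0,r_0)$, and a standard monotonicity plus uniform-convergence argument, comparing $u(\,\cdot\,,a)$ with $u^*$ on $[\bar\mu-\e,\bar\mu+\e]$, gives $\mu(a)\to\bar\mu$; therefore $\lambda(\tau)=\mu(a)^{\beta\theta}\to\bar\lambda$ and $v(s,\tau)=1-u(\mu(a)s,a)\to\bar v(s)$ uniformly on $[0,1]$.

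The substance is part (ii). A regular solution of \eqref{v} at $\lambda=\bar\lambda$ corresponds, via $v(s)=1-u(\bar\mu s,a)$, exactly to a value $a$ with $u(\bar\mu,a)=u^*(\bar\mu)$, i.e.\ to a zero in $a$ of $D(\bar\mu,a):=u(\bar\mu,a)-u^*(\bar\mu)$; moreover $\operatorname{sign}(\lambda(\tau)-\bar\lambda)=\operatorname{sign}(\mu(a)-\bar\mu)=-\operatorname{sign}D(\bar\mu,a)$ by monotonicity of $u(\,\cdot\,,a)$. Thus it suffices to show that $D(\bar\mu,\,\cdot\,)$ changes sign infinitely often as $a\to0$: each sign change yields a distinct regular solution at $\lambda=\bar\lambda$, and between consecutive sign changes $\lambda(\tau)$ has a local extremum, hence a turning point, and these accumulate at $\bar\lambda$ since $\lambda(\tau)\to\bar\lambda$. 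The bridge to Theorem~\ref{THA}(iv), which instead counts the zeros in $r$ of $D(\,\cdot\,,a)$, is the self-similar structure: for the model nonlinearity $g_q$ the dilation $(r,a)\mapsto(\kappa r,\kappa^{\theta/(p+1)}a)$ leaves the zero set of $D$ invariant, so in logarithmic coordinates it is a union of translates of one curve along a fixed direction; Theorem~\ref{THA}(iv) says the horizontal slice $\{a=\text{const}\}$ meets more and more of these curves as $a\to0$, and invariance then forces the vertical slice $\{r=\bar\mu\}$ to meet infinitely many of them, i.e.\ $D(\bar\mu,\,\cdot\,)$ has infinitely many zeros.

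I expect the main obstacle to be making this last bridge rigorous for general $f$, where the exact dilation symmetry holds only for the model profile $v^*$ of \eqref{S1E10}; the work is to transfer the oscillation counted near the origin, where $u(\,\cdot\,,a)$ and $u^*$ are asymptotically self-similar and governed by $g_q,G_q$ through \eqref{S1E3} and (G1)--(G3), out to the fixed observation radius $\bar\mu$. Concretely, I would introduce a Pr\"ufer-type (generalized Emden--Fowler) transformation centred at $u^*$, in which $q<q_c$ guarantees that the linearization about the singular profile is of focus type, so that the trajectory of $u(\,\cdot\,,a)$ winds about $u^*$ a number of times tending to $\infty$ as $a\to0$; each half-turn accounts on one side for an intersection counted by Theorem~\ref{THA}(iv) and on the other for one passage of $\mu(a)$ across $\bar\mu$. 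Checking that the winding accumulated on the fixed window $(0,\bar\mu)$ is genuinely unbounded, and that none is lost when matching the near-origin self-similar regime to the outer radius (and separately handling the borderline $q=1$ case through $g_1,G_1$), is where the real difficulty lies.
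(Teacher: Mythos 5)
Your part (i) is correct and follows essentially the paper's own route: the scaling $\lambda=\mu^{\gamma-\alpha+\beta}$ (your $\beta\theta$ equals $\gamma-\alpha+\beta$), existence of a unique radius $\bar\mu$ with $u^*(\bar\mu)=1$ (which, as the paper notes, uses (G4) and the lower bound of Lemma~\ref{S4L1}~(ii)), uniqueness of the degenerate pair via Theorem~\ref{THA}~(i), and the convergence \eqref{C0E0} via \eqref{S1Econv}. Part (ii), however, has a genuine gap, and it sits exactly where you yourself located it. Your reduction is fine: it suffices to show that $D(\bar\mu,a)=u(\bar\mu,a)-u^*(\bar\mu)$ changes sign along a sequence $a\to 0$, since each sign change forces a crossing of $\lambda(\tau)$ through $\bar\lambda$ and a turning point in between. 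But the bridge you propose from Theorem~\ref{THA}~(iv) (which counts zeros of $r\mapsto u(r,a)-u^*(r)$ at \emph{fixed} $a$) to sign changes of $D(\bar\mu,\cdot)$ in $a$ at a \emph{fixed} radius is not a proof: the exact dilation invariance of the zero set holds only for the pure model nonlinearity $g_q$ of \eqref{S1E10}, not for general $f$ satisfying (G1)--(G3), where one has only asymptotic self-similarity near $r=0$; and the key claim that each half-turn of the winding corresponds to one passage of $\mu(a)$ across $\bar\mu$ is precisely the assertion to be established. Nothing in the sketch excludes the possibility that all the oscillation guaranteed by Theorem~\ref{THA}~(iv) is confined to $r\ll\bar\mu$ and never registers in the sign of $D$ at $\bar\mu$.

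The paper closes this gap with a different, softer device (adapted from \cite[Lemma~8.1]{M18a}): a zero-number continuation argument on a moving interval. Set $U(r,a)=1-u(r,a)$, $U^*(r)=1-u^*(r)$, let $r_0(a)$, $r_0^*$ be their first zeros, and put $z(a):=\calZ_{I}[U(\,\cdot\,,a)-U^*(\,\cdot\,)]$ with $I=[0,\min\{r_0(a),r_0^*\}]$. Every zero of $U(\,\cdot\,,a)-U^*$ is simple (by uniqueness for the second-order ODE satisfied by the difference, valid away from $r=0$ because $u'>0$ and $(u^*)'>0$ there), so zeros depend continuously on $a$ and $z(a)$ can change only when a zero crosses an endpoint of $I$. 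At $r=0$ the difference equals $-a<0$, while at the right endpoint its sign equals the sign of $r_0(a)-r_0^*$, i.e.\ of $\lambda(a)-\bar\lambda$. Hence, if $\lambda(a)$ stayed strictly on one side of $\bar\lambda$ for all small $a$, no zero could enter or leave $I$, so $z(a)$ would remain bounded as $a\to0$ --- contradicting Theorem~\ref{THA}~(iv). Therefore $\lambda(a)-\bar\lambda$ must change sign infinitely often as $a\to0$, which yields both the infinitely many turning points and, by the intermediate value theorem, infinitely many regular solutions of \eqref{v} at $\lambda=\bar\lambda$. This continuation argument (simplicity of zeros, continuous dependence, and sign control at the moving endpoint) is the idea your proposal is missing; without it, or an equivalent rigorous transfer, part (ii) remains unproved.
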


The authors in \cite{GW08a} obtain the existence of infinitely many turning points for \eqref{S1Em} by using a technique developed in \cite{D00} (see also \cite{GLWZ11}). The $p$-Laplace version of \eqref{S1Em} was discused in \cite{K18} where a similar conclusion to that in our Corollary~\ref{C0} above was obtained except the uniqueness of the rupture solution. 
We also refer the interested reader to \cite{LW11} for the detailed asymptotic behavior of the bifurcation curve near $\tau=1$ and numerical experiments.
Quasilinear equations including the $k$-Hessian operator were considered in \cite{dd17}.

Turning to our study of (\ref{S1E1}), let us briefly describe the main tools of our approach.
The solution $u^*$ of (\ref{S1E1}) is constructed as the limit of the regular solutions $u(r,a)$ of (\ref{S1E4}) for $a\to 0$.
The uniqueness of solution $u^*$ requires a detailed ODE analysis of (\ref{S1E1}) near the origin, in which a major role is played by the change of variables
\[
e^{z(t)}=\frac{G[u^*(r)]}{G_q[v^*(r)]}\ \ \textrm{and}\ \ t:=-\log r,
\]
where $v^*$ is given by \eqref{S1E11}. This is a generalization of the Emden transformation that corresponds to the case $f(u)=u^p$.
The construction of the solution to \eqref{S1E1} does not rely on the contraction mapping theorem as mentioned above but rather on a limiting process.
The detailed ODE analysis we develop in Section~4 yields $z(t)\to 0$ and $z_t(t)\to 0$ as $t\to \infty$ which further allows us to derive the uniqueness of solution $u^*$ of \eqref{S1E1} and the leading term of its expansion around the origin.

Our method is applicable to the study of the singular solution $u^*(r)$ of supercritical problems of type $-\Delta u=f(u)$, where $u^*(r)\to\infty$ as $r\to 0$.
See \cite{GG20,M18a,MN18,MN20} for the counterpart of Theorem~\ref{THA} in such a setting.

Finally, in the study of the number of intersection points between the solution $u^*$ of \eqref{S1E1} and regular solutions $u(\cdot, a)$ of \eqref{S1E4}, 
the following transformation plays a key role:
\begin{equation}\label{quasi}
G_q[\tu(s)]=\lambda^{-\theta}G[u(r,a)]\ \ \textrm{where}\ \ s:=\frac{r}{\lambda}\ \ \textrm{ and }\ \ \lambda:=\left(\frac{G[a]}{G_q[1]}\right)^{1/\theta}. 
\end{equation}
Although the original equation (\ref{S1E1}) does not have a scaling invariance, through the limiting process given by (\ref{quasi}) we are led to (\ref{S1E10}) whose structure of solution set is known. In this way, the analysis of (\ref{S1E1}) can be reduced to that of (\ref{S1E10}).

The remaining of the paper is organised as follows. In order to show the full strength of Theorem \ref{THA} we provide some relevant examples of nonlinearities $f(u)$ and determine the explicit behavior around the origin of the  solution $u^*$. This is done in Section~2. In Section~3 we prove the existence of the solution to (\ref{S1E1}) while in Section~4 we prove the uniqueness and the convergence property (\ref{S1Econv}).
In Section~5 we study the intersection properties using a blow-up argument and prove Theorem \ref{THA} (iv).
In Section~6 we prove Corollary~\ref{C0}.


\section{Examples}

In this section we illustrate the results in Theorem \ref{THA} for some specific nonlinearities $f(u)$. 
For two functions $h_1(u)$, $h_2(u)$ defined in a positive neighborhood of the origin, by $h_1(u)\asymp h_2(u)$ we understand that $h_1(u)/h_2(u)\to 1$ as $u\to 0^+$.
\subsection{Example 1}
Consider problem \eqref{S1E1} with $\gamma>\alpha>\beta\geq 1$ and let $f(u)$ be given by
\begin{equation}\label{f1}
f(u):=
\left\{
\begin{aligned}
&u^m|\log u-C|^d &&\quad\mbox{ if } u>0,\\
&0&&\quad\mbox{ if } u=0,
\end{aligned}
\right.
\end{equation}
where $m>\beta$, $d\geq 0$ and $C\in\R$. We have $g(u)=f(u)^{1/\beta}$ and $q=m/(m-\beta)>1$. To compute the asymptotic behavior in \eqref{E5} we use L'Hospital's rule and find
$$
G(u)\asymp \frac{\beta}{m+\beta} u^{\frac{m+\beta}{\beta}} |\log u|^{\frac{d}{\beta}}.
$$
Using the above asymptotic behavior of $G(u)$ we have
$$
\lim_{u\to 0}\frac{G^{-1}(u)}{\Big( \frac{u^\beta}{|\log u|^{d}}\Big)^{\frac{1}{m+\beta}}}=\lim_{v=G^{-1}(u)\to 0} \frac{v}{\Big( \frac{G(v)^\beta}{|\log G(v)|^{d}}\Big)^{\frac{1}{m+\beta}}}=
\Big(\frac{m+\beta}{\beta}\Big)^{\frac{d+\beta}{m+\beta}}.
$$
Hence,
$$
G^{-1}(u)\asymp \left[ C\frac{u^\beta}{|\log u|^d}\right]^{\frac{1}{m+\beta}}\quad\mbox{ where }C=
\Big(\frac{m+\beta}{\beta}\Big)^{d+\beta}.
$$
Using Theorem~\ref{THA} we find:

\begin{proposition}\label{p1} 
Let $\gamma>\alpha>\beta\geq 1$ and let $f$ be defined by \eqref{f1} where $m>\beta$, $d\geq 0$ and $C\in \R$. Then, there exists $r_0>0$ and a unique solution $u^*$ of
\eqref{S1E1}.

Furthermore, 
$$
u^*(r)=\kappa \left[\frac{r^{\gamma-\alpha+\beta}}{|\log r|^d}  \right]^{\frac{1}{m+\beta}}\big(1+o(1)\big)\quad\mbox{ as }r\to 0,
$$
where
$$
\kappa=\left[ \frac{m+\beta}{\beta\gamma+m(\alpha-\beta)} \Big(\frac{m+\beta}{\gamma-\alpha+\beta}\Big)^{d+\beta}\right]^{\frac{1}{m+\beta}}>0.
$$
Also, $u^*$ is of class $C^1$ at the origin if and only if $\gamma\geq m+\alpha$.

If $u(\,\cdot\,, a)$ denotes the regular solution of \eqref{S1E4} with $f(u)$ defined in \eqref{f1}, then:
\begin{enumerate}
\item[(i)] $u(\,\cdot\,, a)\to u^*$ in $C^2_{\rm{loc}}(0,r_0)\cap C_{\rm{loc}}[0, r_0)$ as $a\to 0$;
\item[(ii)] Let $q_c$ be defined by (\ref{qc}), i.e.,
$$
q_c=\frac{1}{2}+\frac{(\gamma-\alpha+\beta)(\beta+1)^2}{4(\sqrt{\beta(\beta\gamma+\alpha-\beta)}-\sqrt{\gamma-\alpha+\beta})^2-2(\gamma-\alpha+\beta)(\beta+1)^2}.
$$
If $\frac{m}{m-\beta}<q_c$, then for any $\rho\in(0,r_0)$ one has
$$
\calZ_{(0,\rho)}\left[ u(\,\cdot\,,a)-u^*(\,\cdot\,)\right]\to\infty\ \ \textrm{as}\ \ a\to 0.
$$

\end{enumerate}
\end{proposition}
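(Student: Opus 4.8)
The plan is to verify that the nonlinearity $f$ in \eqref{f1} satisfies all the hypotheses of Theorem~\ref{THA}, and then to obtain each of the five assertions of Proposition~\ref{p1} by specialising that theorem, reducing everything to the explicit evaluation of the constants $q$, $L$, $\theta$, $A$ and $\kappa$. For small $u>0$ one has $\log u-C<0$, so $g(u)=f(u)^{1/\beta}=u^{m/\beta}(C-\log u)^{d/\beta}$. Writing $s=m/\beta>1$ and $k=d/\beta\ge 0$, I would factor $g'(u)=u^{s-1}(C-\log u)^{k-1}\bigl[s(C-\log u)-k\bigr]$, which is positive for $u$ small since $C-\log u\to+\infty$; the analogous computation shows $g''(u)>0$ for small $u$, its dominant term being $s(s-1)u^{s-2}(C-\log u)^{k}>0$. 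This gives (G1) on a sufficiently small interval $(0,\bar u)$. Since the logarithmic factor $\ell(u)=(C-\log u)^{k}$ contributes only lower-order corrections (indeed $u\,\ell'/\ell=-k/(C-\log u)\to 0$), the limit in (G2) is governed by the pure power $u^{s}$ and equals $q=s/(s-1)=m/(m-\beta)>1$; in particular (G3) is vacuous. Finally, from $q>1$ and the relation $L=q/(2q-1)$ recorded after \eqref{S1E11}, together with \eqref{S1E3}, one gets $L=m/(m+\beta)$.

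With the hypotheses confirmed, Theorem~\ref{THA}~(i) yields the existence of $r_0>0$ and of a unique solution $u^*$ of \eqref{S1E1}, while part (iii) gives assertion (i) of the proposition, namely $u(\,\cdot\,,a)\to u^*$ in $C^2_{\rm loc}(0,r_0)\cap C_{\rm loc}[0,r_0)$. For the asymptotic profile I would substitute the expansion $G^{-1}(w)\asymp[\,C\,w^{\beta}/|\log w|^{d}\,]^{1/(m+\beta)}$, with $C=((m+\beta)/\beta)^{d+\beta}$ already derived before the proposition, into \eqref{E5} with $w=r^{\theta}/A$. Since $\log w=\theta\log r-\log A\sim\theta\log r$, one has $|\log w|^{d}\sim\theta^{d}|\log r|^{d}$ and $w^{\beta}=r^{\theta\beta}/A^{\beta}=r^{\gamma-\alpha+\beta}/A^{\beta}$, because $\theta\beta=\gamma-\alpha+\beta$ by \eqref{E6}. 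Evaluating \eqref{E7} with $L=m/(m+\beta)$ gives $\gamma-L(\gamma-\alpha+\beta)=(\beta\gamma+m(\alpha-\beta))/(m+\beta)$, hence $A^{\beta}=\theta^{\beta}(\beta\gamma+m(\alpha-\beta))/(m+\beta)$. Collecting the constants, $\kappa=[\,C/(A^{\beta}\theta^{d})\,]^{1/(m+\beta)}$; using the identity $\beta^{d+\beta}\theta^{\beta+d}=(\gamma-\alpha+\beta)^{\beta+d}$ this simplifies to the stated value.

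For the $C^1$ statement I would apply Theorem~\ref{THA}~(ii): using $G(s)\asymp\frac{\beta}{m+\beta}s^{(m+\beta)/\beta}|\log s|^{d/\beta}$ one computes $s^{\theta}/G(s)\asymp\frac{m+\beta}{\beta}\,s^{(\gamma-\alpha-m)/\beta}/|\log s|^{d/\beta}$, which has a finite limit as $s\to 0$ precisely when $\gamma-\alpha-m\ge 0$ (the borderline case $\gamma=\alpha+m$ is included, since the logarithm keeps the expression bounded); this is the criterion $\gamma\ge m+\alpha$. Assertion (ii) of the proposition is then Theorem~\ref{THA}~(iv) with $q=m/(m-\beta)$: the formula displayed for $q_c$ is exactly \eqref{qc} after substituting $\theta=(\gamma-\alpha+\beta)/\beta$ and $\tau=\beta\gamma+\alpha-\beta$ and clearing the common factor $\beta$ from numerator and denominator, so the hypothesis $q<q_c$ reads $m/(m-\beta)<q_c$ and the conclusion $\calZ_{(0,\rho)}[u(\,\cdot\,,a)-u^*]\to\infty$ follows verbatim.

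The argument is a specialisation of Theorem~\ref{THA}, so no serious obstacle arises. The only genuinely delicate points are the verification of the convexity requirement $g''>0$ in (G1) in the presence of the logarithmic factor---which is why one restricts to a small interval $(0,\bar u)$ and tracks the signs of the subleading terms---and the bookkeeping in the algebraic simplification of $A$ and $\kappa$; both become routine once the factorisations above are in place.
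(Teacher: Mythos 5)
Your proposal is correct and follows essentially the same route as the paper: verify (G1)--(G3) for $f$ in \eqref{f1}, compute $q=m/(m-\beta)$ and $L=m/(m+\beta)$ together with the asymptotics of $G$ and $G^{-1}$, and then specialise Theorem~\ref{THA} part by part. Your explicit check of $g'>0$, $g''>0$ and the constant bookkeeping for $A$ and $\kappa$ are in fact somewhat more detailed than the paper's own treatment, which records only the asymptotics of $G$ and $G^{-1}$ before invoking the theorem.
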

\subsection{Example 2.} As a particular case of Proposition \ref{p1} let us consider the following MEMS problem for the $p$-Laplace operator:
\begin{equation}\label{MEMSp}
\begin{cases}
\; -\Delta_p v=\frac{|x|^\sigma}{(1-v)^m}\,, v<1  \textrm{ in } B_{r_0}\setminus\{0\}\subset \R^N,\\
\; v(0)=1,
\end{cases}
\end{equation}

where $N> p\geq 2$, $m>p-1$, $\sigma\ge 0$. If $v$ is any radially symmetric solution of \eqref{MEMSp}, then $u=1-v$ is a solution of \eqref{S1E1} with 
$$
\gamma=N+\sigma\,,\; \alpha=N-1\,,\;\beta=p-1\,\mbox{ and }f(u)=u^m,
$$
so Proposition \ref{p1} applies in this case. By direct calculations we also have that
\begin{equation}\label{vstar}
v^*(r)=1-\left[ \frac{m+p-1}{(N+\sigma)(p-1)+(N-p)m} \Big(\frac{m+p-1}{p+\sigma}   \Big)^{p-1} \right]^{\frac{1}{m+p-1}} r^{\frac{p+\sigma}{m+p-1}} 
\end{equation}
is a solution of \eqref{MEMSp}.  Our result regarding \eqref{MEMSp} is stated below.

\begin{proposition}
Let $N>p\geq 2$, $m>p-1$ and $\sigma\ge 0$. 
\begin{enumerate}
\item[(i)] There exists $r_0>0$ such that $v^*$ defined by \eqref{vstar} is the unique radial solution of \eqref{MEMSp};
\item[(ii)] For any $a\in (0,1)$ let $v(\,\cdot\,, a)$ be the radial solution of  
$$
\begin{cases}
\; -\Delta_p v(\,\cdot\,, a)=\frac{|x|^\sigma}{(1-v(\,\cdot\,, a))^m}\;,\;\; 0<v<1  \;\; \mbox{ in } B_{r_0},\\
\; v(0, a)=a.
\end{cases}
$$
Then:
\begin{enumerate}
\item[(ii1)] $v(\,\cdot\,, a)\to v^*$ in $C^2_{\rm{loc}}(0,r_0)\cap C_{\rm{loc}}[0, r_0)$ as $a\to 0$;
\item[(ii2)] Let
$$
q_c=\frac{1}{2}+\frac{p^2(p+\sigma)}{4(\sqrt{(p-1)\{\sigma(p-1)+p(N-1)\}}-\sqrt{p+\sigma})^2-2p^2(p+\sigma)}.
$$
If $\frac{m}{m-p+1}<q_c$, then for any $\rho\in(0,r_0)$ one has
$$
\calZ_{(0,\rho)}\left[ v(\,\cdot\,,a)-v^*(\,\cdot\,)\right]\to\infty\ \ \textrm{as}\ \ a\to 0.
$$
\end{enumerate}
\end{enumerate}
\end{proposition}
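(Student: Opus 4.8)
The plan is to reduce the statement to Proposition~\ref{p1} (equivalently Theorem~\ref{THA}) through the substitution $u=1-v$ recorded in the text just before the statement. First I would verify that this substitution turns the radial form of \eqref{MEMSp} into \eqref{S1E1}. For a radial function $\Delta_p v=r^{-(N-1)}\bigl(r^{N-1}|v'|^{p-2}v'\bigr)'$, and since $u=1-v$ gives $|v'|^{p-2}v'=-|u'|^{p-2}u'$, the equation $-\Delta_p v=r^\sigma/(1-v)^m$ becomes, after multiplying by $r^{-\sigma}$,
\[
r^{-(N-1+\sigma)}\bigl(r^{N-1}|u'|^{p-2}u'\bigr)'=\frac{1}{u^m}.
\]
This is exactly \eqref{S1E1} with $\gamma=N+\sigma$, $\alpha=N-1$, $\beta=p-1$ and $f(u)=u^m$, while $N>p\ge 2$ and $\sigma\ge 0$ give $\gamma>\alpha>\beta\ge 1$. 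As $f(u)=u^m$ is the case $d=0$ of \eqref{f1}, conditions (G1)--(G3) hold and $q=m/(m-p+1)>1$, so Proposition~\ref{p1} applies verbatim.

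For part (i) I would combine existence with uniqueness. A direct computation (the one recorded in the text) shows that the explicit profile \eqref{vstar} solves the equation in \eqref{MEMSp} with $v^*(0)=1$, so $u:=1-v^*$ solves \eqref{S1E1}; its exponent and coefficient coincide with the leading term $\kappa\,r^{(\gamma-\alpha+\beta)/(m+\beta)}$ produced by Proposition~\ref{p1} for $d=0$, the $o(1)$ being identically zero because the nonlinearity is a pure power. By Theorem~\ref{THA}~(i) the solution $u^*$ of \eqref{S1E1} is unique, whence $u^*=1-v^*$ and therefore $v^*$ is the unique radial solution of \eqref{MEMSp} on a sufficiently small ball.

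For (ii1) and (ii2) I would use that, under $u=1-v$, each regular solution $v(\,\cdot\,,a)$ of the MEMS problem corresponds to a regular solution of \eqref{S1E4}, and that the approach to the rupture solution $v^*$ (that is, $v(0)\to 1$, equivalently $u(0)\to 0$) is precisely the regime of Theorem~\ref{THA}~(iii)--(iv). Convergence (ii1) is then immediate from part (iii). For (ii2), the identity $v(\,\cdot\,,a)-v^*=-\bigl[(1-v(\,\cdot\,,a))-(1-v^*)\bigr]$ shows the two differences share the same zero set, so the intersection number is invariant under $u=1-v$ and part (iv) yields the blow-up once $q<q_c$. It then remains to specialize \eqref{qc}: with $\beta+1=p$, $\theta=(p+\sigma)/(p-1)$ and $\tau=\beta\gamma+\alpha-\beta=p(N-1)+(p-1)\sigma$, multiplying numerator and denominator of \eqref{qc} by $(p-1)$ and using $(p-1)\theta=p+\sigma$ together with $(p-1)\tau=(p-1)\{p(N-1)+\sigma(p-1)\}$ converts \eqref{qc} into the displayed formula for $q_c$, and the hypothesis $m/(m-p+1)<q_c$ is exactly $q<q_c$.

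The main obstacle is not conceptual, since the proposition is a corollary of Theorem~\ref{THA}; it lies in the bookkeeping. The points requiring care are the sign and exponent matching in the reduction $u=1-v$, the fact that \eqref{vstar} is an \emph{exact} (not merely asymptotic) solution so that Theorem~\ref{THA}~(i) pins it down, the invariance of $\calZ_{(0,\rho)}$ under the affine change $u=1-v$, and the algebraic simplification of $q_c$ to the form stated in (ii2).
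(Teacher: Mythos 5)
Your proposal is correct and takes essentially the same route as the paper: the paper also obtains this proposition purely as a specialization of Proposition~\ref{p1} (hence of Theorem~\ref{THA}) via the substitution $u=1-v$ with $\gamma=N+\sigma$, $\alpha=N-1$, $\beta=p-1$, $f(u)=u^m$, together with the direct verification that \eqref{vstar} is an exact solution (so uniqueness pins it down) and the same multiplication-by-$(p-1)$ simplification of $q_c$. Your reading of the parametrization (the rupture regime $v(0)\to 1$, equivalently $u(0)\to 0$, so that the regular solutions correspond to those of \eqref{S1E4} with central value tending to $0$) is also the intended one.
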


\subsection{Example 3} Let $1\leq k<N/2$ be a positive integer and denote by 
$S_k(D^2 u)$ the $k$-Hessian operator of a radially symmetric function $u=u(r)$, that is,  
$$
S_k(D^2 u)=\frac{1}{k}\binom{N-1}{k-1} r^{-(N-1)}\Big[ r^{N-k} (u')^{k} \Big]'.
$$
Consider  the problem
\begin{equation}\label{khes}
\begin{cases}
\; c_{N,k}S_k(D^2 u) =\frac{1}{u^m}\,,\; u>0  \;\textrm{ in } B_{r_0}\setminus\{0\}\subset \R^N,\\
\; u(0)=0,
\end{cases}
\end{equation}
where $m>k$ and $c_{N,k}=k\binom{N-1}{k-1}^{-1}>0$. 
As a consequence of Proposition \ref{p1} we find:

\begin{proposition}
Let $1\leq k<\min\{m, N/2\}$ be a positive integer.
\begin{enumerate}
\item[(i)] There exists $r_0>0$ and a unique increasing radially symmetric solution $u^*$ of \eqref{khes}. Furthermore,
$$
u^*(r)=\left[ \frac{m+k}{Nk + m(N-2k)} \Big(\frac{m+k}{2k}\Big)^{k}\right]^{\frac{1}{m+k}}
r^{\frac{2k}{m+k}}.
$$
\item[(ii)] For any $a\in (0,1)$ let $u(\,\cdot\,, a)$ be the radial solution of  
$$
\begin{cases}
\; c_{N,k}S_k(D^2 u(\,\cdot\,, a)) =\frac{1}{u(\,\cdot\,, a)^m}\,,\; u(\,\cdot\,, a)>0  \;\textrm{ in } B_{r_0},\\
\; u(0, a)=a.
\end{cases}
$$
Then:
\begin{enumerate}
\item[(ii1)] $u(\,\cdot\,, a)\to v^*$ in $C^2_{\rm{loc}}(0,r_0)\cap C_{\rm{loc}}[0, r_0)$ as $a\to 0$;
\item[(ii2)] Let
$$
q_c=\frac{1}{2}+\frac{(k+1)^2}{2(\sqrt{Nk+N-2k}-\sqrt{2})^2-2(k+1)^2}.
$$
If $\frac{m}{m-k}<q_c$, then for any $\rho\in(0,r_0)$ one has
$$
\calZ_{(0,\rho)}\left[ u(\,\cdot\,,a)-u^*(\,\cdot\,)\right]\to\infty\ \ \textrm{as}\ \ a\to 0.
$$
\end{enumerate}
\end{enumerate}
\end{proposition}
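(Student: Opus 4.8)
The plan is to recognise that the $k$-Hessian problem \eqref{khes} is, after identifying the correct exponents, a special instance of Example 1, and then to invoke Proposition \ref{p1}. First I would observe that for a radially symmetric \emph{increasing} function $u=u(r)$ one has $u'\ge 0$, so that $|u'|^{\beta-1}u'=(u')^{\beta}$, and the normalisation $c_{N,k}=k\binom{N-1}{k-1}^{-1}$ turns the operator into divergence form,
\[
c_{N,k}S_k(D^2u)=r^{-(N-1)}\bigl(r^{N-k}(u')^{k}\bigr)'.
\]
Comparing this with the left-hand side of \eqref{S1E1} I read off the dictionary $\gamma=N$, $\alpha=N-k$, $\beta=k$, $f(u)=u^{m}$, which is the nonlinearity \eqref{f1} with $d=0$. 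With these values the structural condition $\gamma>\alpha>\beta\ge 1$ is equivalent to $1\le k<N/2$, while the requirement $m>\beta$ of Proposition \ref{p1} is exactly $m>k$; both are guaranteed by the hypothesis $1\le k<\min\{m,N/2\}$. Thus \eqref{khes} is the problem \eqref{S1E1} for a pure power nonlinearity, and Proposition \ref{p1} applies, yielding at once existence and uniqueness of $u^{*}$ (within the class of increasing solutions, for which the reduction is exact) together with the convergence statement (ii1).

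For the explicit form in (i) the key remark is that $f(u)=u^{m}$ is scale invariant, so the machinery collapses onto the model \eqref{S1E10}: here $g(u)=u^{m/k}=g_q(u)$ with $p=m/k$ and $q=m/(m-k)>1$ (note $1/p+1/q=1$), and $G(u)=G_q(u)=u^{p+1}/(p+1)$. Consequently the exact solution $v^{*}$ of \eqref{S1E11} is itself a solution of \eqref{S1E1} vanishing at the origin (as $q>1$), and by the uniqueness in Theorem \ref{THA}(i) we must have $u^{*}\equiv v^{*}$; in particular the remainder $1+o(1)$ in \eqref{E5} is identically $1$. It then remains to evaluate $v^{*}(s)=G_q^{-1}[A^{-1}s^{\theta}]=\bigl[(p+1)A^{-1}s^{\theta}\bigr]^{1/(p+1)}$ at the present exponents. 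A short computation gives $\theta=1+(\gamma-\alpha)/\beta=2$ and $L=q/(2q-1)=m/(m+k)$, whence $\gamma-L(\gamma-\alpha+\beta)=\{Nk+m(N-2k)\}/(m+k)$ and $A=2\bigl[\{Nk+m(N-2k)\}/(m+k)\bigr]^{1/k}$ by \eqref{E7}. Substituting and collecting exponents produces precisely
\[
u^{*}(r)=\left[\frac{m+k}{Nk+m(N-2k)}\Bigl(\frac{m+k}{2k}\Bigr)^{k}\right]^{\frac{1}{m+k}}r^{\frac{2k}{m+k}}.
\]

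Finally, for the intersection statement (ii2) I would transcribe Theorem \ref{THA}(iv), i.e.\ Proposition \ref{p1}(ii), at these exponents. With $\theta=2$, $\beta=k$ and $\tau=\beta\gamma+\alpha-\beta=Nk+N-2k$, the definition \eqref{qc} of $q_c$ becomes, after dividing the numerator and denominator of the second term by $2$,
\[
q_c=\frac12+\frac{(k+1)^{2}}{2\bigl(\sqrt{Nk+N-2k}-\sqrt2\bigr)^{2}-2(k+1)^{2}},
\]
while $q=m/(m-k)$, so the hypothesis $q<q_c$ reads $m/(m-k)<q_c$ and Theorem \ref{THA}(iv) gives the blow-up $\calZ_{(0,\rho)}[u(\,\cdot\,,a)-u^{*}(\,\cdot\,)]\to\infty$ as $a\to 0$. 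The only genuinely delicate points are the two algebraic verifications---that the constant in front of $v^{*}$ collapses to the stated expression and that the abstract formula \eqref{qc} reduces to the displayed $q_c$---together with the preliminary observation that one must restrict to \emph{increasing} solutions for the divergence identity for $S_k$, and hence the reduction to \eqref{S1E1}, to hold; everything else is a direct appeal to Proposition \ref{p1}.
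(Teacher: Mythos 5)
Your proposal is correct and follows essentially the same route as the paper: the paper states this proposition as a direct consequence of Proposition \ref{p1} via the dictionary $\gamma=N$, $\alpha=N-k$, $\beta=k$, $f(u)=u^{m}$ (the $k$-Hessian row of the table in Section 1), and your algebra for $\theta=2$, $L=m/(m+k)$, $A$, $\kappa$ and $q_c$ reproduces exactly the stated constants. Your additional observations --- that the reduction requires $u'\ge 0$ so that $(u')^{k}=|u'|^{k-1}u'$, and that for the pure power $g=g_q$ the exact solution $v^{*}$ of \eqref{S1E11} solves \eqref{S1E1} so uniqueness removes the $1+o(1)$ factor --- are precisely the points the paper leaves implicit (it handles the analogous exactness in Example 2 by ``direct calculations'' plus uniqueness), so they are welcome details rather than a different method.
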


\subsection{Example 4}
Let
\begin{equation}\label{f2}
f(u):=
\left\{
\begin{aligned}
&e^{-\frac{1}{u^m}} &&\quad\mbox{ if } u>0,\\
&0&&\quad\mbox{ if } u=0,
\end{aligned}
\right.
\end{equation}
where $m>0$. We have $q=1$ and $g:=f^{1/\beta}$ satisfies $g'(u)^2/g(u)g''(u)\ge 1$ for small $u>0$. 
Thus, by Lemma~\ref{S3L1}~(iii) condition (G3) holds.
By L'Hospital's rule we obtain
$$
G(u)\asymp \frac{\beta}{m} u^{m+1} e^{-\frac{1}{\beta u^m}}.
$$
From here we deduce
$$
\lim_{u\to 0}\frac{G^{-1}(u)}{\big(\log \frac{1}{u}\big)^{-1/m}}=
\lim_{v=G^{-1}(u)\to 0} \frac{v}{\big( \log \frac{1}{G(v)}\big)^{-1/m}}=\beta^{-\frac{1}{m}}.
$$
Thus, $G^{-1}(u)\asymp |\beta \log u|^{-{1}/{m}}$.
Using Theorem~\ref{THA} we find:

\begin{proposition} Let $\gamma>\alpha>\beta\geq 1$ and let $f$ be defined by \eqref{f2} where $m>0$.  Then, there exists $r_0>0$ and a unique solution $u^*$ of
\eqref{S1E1}.

Furthermore, 
$$
u^*(r)=|(\gamma-\alpha+\beta)\log r|^{-\frac{1}{m}} \big(1+o(1)\big)\ \ \mbox{as}\ \ r\to 0.
$$
If $u(\,\cdot\,, a)$ denotes the regular solution of \eqref{S1E4} with $f(u)$ defined in \eqref{f2}, then
\begin{enumerate}
\item[(i)] $u(\,\cdot\,, a)\to u^*$ in $C^2_{\rm{loc}}(0,r_0)\cap C_{\rm{loc}}[0, r_0)$ as $a\to 0$;
\item[(ii)] If $4\gamma>(\alpha-\beta)(\beta+4)$, then any $\rho\in (0,r_0)$ we have 
$$
{\mathcal Z}_{(0,\rho)}[u(\,\cdot\,, a)-u^*(\,\cdot\,)]\to \infty\ \ \mbox{as}\ \ a\to 0.
$$
\end{enumerate}
\end{proposition}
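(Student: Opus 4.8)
The plan is to read off every assertion from Theorem~\ref{THA}, so the first task is to check that $g=f^{1/\beta}=e^{-1/(\beta u^{m})}$ verifies (G1)--(G3). Differentiating gives $g'(u)=\frac{m}{\beta}u^{-m-1}g(u)>0$ and
\[
g''(u)=\frac{m}{\beta}u^{-m-2}g(u)\Big[\tfrac{m}{\beta}u^{-m}-(m+1)\Big],
\]
which is positive for small $u$, so (G1) holds near the origin. The same computation yields
\[
\frac{g'(u)^2}{g(u)g''(u)}=\frac{1}{1-(m+1)\tfrac{\beta}{m}u^{m}}\longrightarrow 1\qquad\textrm{as}\ u\to 0,
\]
so (G2) holds with $q=1$; since the displayed quantity exceeds $1$ for small $u>0$, condition (G3) follows (as already noted via Lemma~\ref{S3L1}~(iii)). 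Consequently Theorem~\ref{THA} applies, and the existence and uniqueness of $u^*$, together with the convergence $u(\,\cdot\,,a)\to u^*$ in $C^2_{\rm loc}(0,r_0)\cap C_{\rm loc}[0,r_0)$ asserted in item~(i), are immediate from Theorem~\ref{THA}~(i) and (iii).

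For the asymptotic profile I would feed the precomputed estimate $G^{-1}(u)\asymp|\beta\log u|^{-1/m}$ into the expansion \eqref{E5}. Because $q=1$ forces $L=q/(2q-1)=1$, the constant in \eqref{E7} becomes the positive number $A=\theta(\alpha-\beta)^{1/\beta}$, with $\theta=(\gamma-\alpha+\beta)/\beta$; only its positivity will matter below. Writing $w(r):=A^{-1}r^{\theta}(1+o(1))$, we have $w(r)\to 0$ as $r\to 0$ since $\theta>1$, so the $\asymp$ relation may be composed with $w$ to give $u^*(r)=|\beta\log w(r)|^{-1/m}(1+o(1))$. The point is then that
\[
\log w(r)=\theta\log r-\log A+o(1)=\theta\log r\,(1+o(1)),
\]
because the divergent factor $\theta\log r\to-\infty$ absorbs the bounded terms; hence $\beta\log w(r)=\beta\theta\log r\,(1+o(1))$, and using $\beta\theta=\gamma-\alpha+\beta$ and applying the exponent $-1/m$ produces exactly the stated formula $u^*(r)=|(\gamma-\alpha+\beta)\log r|^{-1/m}(1+o(1))$.

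Finally, for the intersection statement I would invoke Theorem~\ref{THA}~(iv): since $q=1$, the hypothesis $q<q_c$ reduces to $q_c>1$, and by the Remark following Theorem~\ref{THA} this is equivalent to $\alpha-\beta<4\theta$. Substituting $\theta=(\gamma-\alpha+\beta)/\beta$ and clearing denominators turns $\beta(\alpha-\beta)<4(\gamma-\alpha+\beta)$ into $4\gamma>(\alpha-\beta)(\beta+4)$, which is precisely the assumed condition; Theorem~\ref{THA}~(iv) then delivers $\calZ_{(0,\rho)}[u(\,\cdot\,,a)-u^*(\,\cdot\,)]\to\infty$ as $a\to 0$ for every $\rho\in(0,r_0)$. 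There is no genuinely hard step, the proposition being a specialization of Theorem~\ref{THA}; the only place demanding care is the logarithmic asymptotics, where one must verify that the additive constant $-\log A$ and the inner $o(1)$ are swallowed by the divergent factor $\theta\log r$ before the exponent $-1/m$ is applied.
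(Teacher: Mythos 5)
Your proposal is correct and follows essentially the same route as the paper: verify (G1)--(G3) for $g=e^{-1/(\beta u^m)}$ with $q=1$ (hence $L=1$ and $A=\theta(\alpha-\beta)^{1/\beta}$), then specialize Theorem~\ref{THA}, reducing part (ii) via the equivalence $q<q_c\Leftrightarrow q_c>1\Leftrightarrow\alpha-\beta<4\theta\Leftrightarrow 4\gamma>(\alpha-\beta)(\beta+4)$ noted in the Remark. The one ingredient you import as ``precomputed'', namely $G^{-1}(u)\asymp|\beta\log u|^{-1/m}$, is precisely the computation the paper carries out (L'Hospital gives $G(u)\asymp\frac{\beta}{m}u^{m+1}e^{-1/(\beta u^m)}$, whence the inverse asymptotics), so a self-contained write-up should include that short step; conversely, your explicit check that $-\log A$ and the inner $o(1)$ are absorbed by the divergent factor $\theta\log r$ before applying the exponent $-1/m$ supplies a detail the paper leaves implicit.
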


\subsection{Example 5}
Let
\begin{equation}\label{f3}
f(u):=
\left\{
\begin{aligned}
&\exp\{-e^{1/u}\} &&\quad\mbox{ if } u>0,\\
&0&&\quad\mbox{ if } u=0,
\end{aligned}
\right.
\end{equation}
where $\exp\{v\}=e^v$. An easy calculation yields $q=1$ and the function $g:=f^{1/\beta}$ satisfies $g'(u)^2/(g(u)g''(u))\ge 1$ for small $u>0$.
By Lemma~\ref{S3L1}~(iii), condition (G3) holds.
By L'Hospital's rule we find
$$
G(u)\asymp \beta u^2 e^{-1/u}g(u).
$$
Further, we compute 
$$
\lim_{u\to 0}\frac{G^{-1}(u)}{\frac{1}{\log(|\log u|)}} =
\lim_{v=G^{-1}(u)\to 0} \frac{v}{\frac{1}{\log(|\log G(v)|)}}=1.
$$
Hence, $G^{-1}(u)\asymp \frac{1}{\log(|\log u|)}$.
Using Theorem~\ref{THA} we find:

\begin{proposition} Let $\gamma>\alpha>\beta\geq 1$ and let $f$ be defined by \eqref{f3}.  Then, there exists $r_0>0$ and a unique solution $u^*$ of
\eqref{S1E1}.

Furthermore, 
$$
u^*(r)=\frac{1+o(1)}{\log(|\log r|)}\ \ \mbox{as}\ \ r\to 0.
$$
If $u(\,\cdot\,, a)$ denotes the regular solution of \eqref{S1E4}  with $f(u)$ defined in \eqref{f3}, then
\begin{enumerate}
\item[(i)] $u(\,\cdot\,, a)\to u^*$ in $C^2_{\rm{loc}}(0,r_0)\cap C_{\rm{loc}}[0, r_0)$ as $a\to 0$;
\item[(ii)] If $4\gamma>(\alpha-\beta)(\beta+4)$, then and any $\rho\in (0,r_0)$  we have 
$$
{\mathcal Z}_{(0,\rho)}[u(\,\cdot\,, a)-u^*(\,\cdot\,)]\to \infty\ \ \mbox{as}\ \  a\to 0.
$$
\end{enumerate}
\end{proposition}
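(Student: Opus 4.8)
The plan is to verify that the nonlinearity $f$ in \eqref{f3} satisfies the hypotheses (G1)--(G3) of Theorem~\ref{THA} and then to read off all four conclusions from that theorem after computing the relevant asymptotics of $G$ and $G^{-1}$. First I would record that $g=f^{1/\beta}=\exp\{-e^{1/u}/\beta\}$ is smooth, positive and increasing on a punctured neighborhood of the origin, so that (G1) holds, and that a direct differentiation gives $q=\lim_{u\to 0}g'(u)^2/(g(u)g''(u))=1$, so (G2) holds with $q=1$, together with $g'(u)^2/(g(u)g''(u))\ge 1$ for small $u>0$; by Lemma~\ref{S3L1}~(iii) this last inequality is exactly what is needed to secure (G3). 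With the hypotheses in force, Theorem~\ref{THA}~(i) already yields the existence of $r_0>0$ and a unique solution $u^*$ of \eqref{S1E1}, and Theorem~\ref{THA}~(iii) yields the convergence statement (i).

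Next I would establish the two asymptotic relations $G(u)\asymp\beta u^2 e^{-1/u}g(u)$ and $G^{-1}(u)\asymp 1/\log(|\log u|)$ as $u\to 0$, both via L'Hospital's rule as indicated before the statement. For the first, differentiating the quotient $G(u)/(\beta u^2 e^{-1/u}g(u))$ reduces the limit to $g(u)/(g(u)(1+o(1)))\to 1$, the dominant contribution to the denominator's logarithmic derivative coming from the term $e^{1/u}/(\beta u^2)$ in $(\log(\beta u^2 e^{-1/u}g(u)))'$. For the second, I would substitute $v=G^{-1}(u)$ and verify $v\log(|\log G(v)|)\to 1$, which follows because $\log G(v)\sim -e^{1/v}/\beta$ forces $\log(|\log G(v)|)\sim 1/v$.

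I would then substitute these into the expansion \eqref{E5}. Writing $w=(r^{\theta}/A)(1+o(1))$, one has $\log w=\theta\log r\,(1+o(1))$ since $\log r\to-\infty$, hence $|\log w|=\theta|\log r|(1+o(1))$ and, crucially, $\log(|\log w|)=\log(|\log r|)(1+o(1))$ because $\log(|\log r|)\to\infty$ absorbs the additive constant $\log\theta$. Therefore $u^*(r)=G^{-1}[w]\asymp 1/\log(|\log w|)\asymp 1/\log(|\log r|)$, which is the claimed leading order. I expect this step --- the robustness of the doubly logarithmic profile under the $(1+o(1))$ and power corrections in the argument of $G^{-1}$ --- to be the only genuinely delicate point, since a slowly varying function such as $1/\log(|\log\,\cdot\,|)$ must be shown insensitive to such perturbations; everything else is bookkeeping.

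Finally, for the intersection statement (ii) I would note that here $q=1$, so the hypothesis $q<q_c$ of Theorem~\ref{THA}~(iv) reduces to $q_c>1$. By the Remark following Theorem~\ref{THA}, part (i), the inequality $q_c>1$ is equivalent to $\alpha-\beta<4\theta$; substituting $\theta=1+(\gamma-\alpha)/\beta$ and clearing the denominator $\beta>0$ turns this into $4\gamma>(\alpha-\beta)(\beta+4)$, precisely the hypothesis of (ii). Under this condition Theorem~\ref{THA}~(iv) gives $\calZ_{(0,\rho)}[u(\,\cdot\,,a)-u^*(\,\cdot\,)]\to\infty$ as $a\to 0$ for every $\rho\in(0,r_0)$, completing the proof.
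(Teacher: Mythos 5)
Your proposal is correct and follows essentially the same route as the paper: verify (G1)--(G3) for $g=\exp\{-e^{1/u}/\beta\}$ with $q=1$ (invoking Lemma~\ref{S3L1}~(iii) for (G3)), obtain $G(u)\asymp\beta u^2e^{-1/u}g(u)$ and $G^{-1}(u)\asymp 1/\log(|\log u|)$ by L'Hospital's rule, substitute into \eqref{E5}, and apply Theorem~\ref{THA}. The only difference is one of explicitness: you spell out the insensitivity of the double logarithm to the factor $r^\theta/A$ and the $(1+o(1))$ correction, and the algebraic reduction of $q_c>1$ (via the Remark's criterion $\alpha-\beta<4\theta$) to $4\gamma>(\alpha-\beta)(\beta+4)$, both of which the paper leaves implicit in the phrase ``Using Theorem~\ref{THA} we find.''
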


\section{Existence}
\begin{lemma}\label{S3L1}
The following hold:
\begin{enumerate}
\item[(i)] The limit  $q$ defined in (G2) satisfies $q\ge 1$.
\item[(ii)] The limit $L$ defined in (\ref{S1E3}) exists and $L=q/(2q-1)$. In particular $L\in(\frac{1}{2},1]$.
\item[(iii)] If $\frac{g'(u)^2}{g(u)g''(u)}\ge 1$, then (G3) holds, that is, $\frac{g'(u)G(u)}{g(u)^2}\le 1$.
\end{enumerate}
\end{lemma}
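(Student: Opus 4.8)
The plan is to introduce the two auxiliary quantities $P(u):=g(u)/g'(u)$ and $Q(u):=G(u)/g(u)$, both positive on $(0,\bar u)$ by (G1), since all three parts become transparent once these are differentiated. A one-line computation gives
\[
P'=1-\frac{g g''}{(g')^2},\qquad Q'=1-\frac{g'G}{g^2},
\]
so that $\frac{(g')^2}{g g''}=\frac{1}{1-P'}$ and $\frac{g'G}{g^2}=1-Q'$; thus the whole lemma is a statement about the signs and limits of $P'$ and $Q'$. I also record one elementary consequence of convexity: since $g''>0$ and $g(0)=0$, the tangent-line inequality $g(0)\ge g(u)-u g'(u)$ yields $g(u)\le u g'(u)$, hence $0<P(u)\le u$ and in particular $P(u)\to 0^+$ as $u\to 0$.

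For (i), I note that (G2) guarantees that $\lim_{u\to 0}\frac{g g''}{(g')^2}=1/q$ exists, hence $\lim_{u\to 0}P'=1-1/q$ exists. I then argue by contradiction: if this limit were negative, $P$ would be strictly decreasing on some interval $(0,\delta)$, so for $0<u_1<u_2<\delta$ we would have $P(u_1)>P(u_2)$; letting $u_1\to 0^+$ and using $P\to 0^+$ forces $0\ge P(u_2)>0$, a contradiction. Therefore $\lim P'\ge 0$, i.e. $1/q\le 1$, which is exactly $q\ge 1$.

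For (ii), I compute $L$ by L'Hospital's rule applied to $\frac{g'G}{g^2}=\dfrac{G}{g^2/g'}$, whose numerator and denominator both tend to $0$ (the denominator equals $g\cdot P\to 0$). Setting $D:=g^2/g'$ one finds $D'=g\big(2-\frac{g g''}{(g')^2}\big)$, which is positive near $0$ because $\frac{g g''}{(g')^2}\to 1/q\le 1<2$; hence
\[
L=\lim_{u\to 0}\frac{G'}{D'}=\lim_{u\to 0}\frac{1}{\,2-\frac{g g''}{(g')^2}\,}=\frac{1}{2-1/q}=\frac{q}{2q-1}.
\]
The range $L\in(\tfrac12,1]$ then follows from part (i) together with the fact that $q\mapsto q/(2q-1)$ is strictly decreasing on $[1,\infty)$, equal to $1$ at $q=1$ and tending to $\tfrac12$ as $q\to\infty$.

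For (iii), the hypothesis $\frac{(g')^2}{g g''}\ge 1$ is exactly $P'\ge 0$, i.e. $P$ is nondecreasing. Writing $g=g'P$ and using $P(s)\le P(u)$ for $0\le s\le u$, I estimate
\[
G(u)=\int_0^u g'(s)P(s)\,ds\le P(u)\int_0^u g'(s)\,ds=P(u)g(u)=\frac{g(u)^2}{g'(u)},
\]
which rearranges to $\frac{g'(u)G(u)}{g(u)^2}\le 1$, i.e. (G3). The routine differentiations are immediate and part (iii) is a one-line integral estimate built on $P$; the genuinely delicate step is part (i), where one must first secure both $P\to 0^+$ (via convexity) and the existence of $\lim P'$ (via (G2)) before the monotonicity/sign argument can be run, and part (ii) is then essentially a single application of L'Hospital's rule using the same quantities.
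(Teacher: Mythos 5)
Your proof is correct, and parts of it take a genuinely different route from the paper's. Part (ii) is essentially the paper's argument: both proofs apply L'Hospital's rule to $G/(g^2/g')$ after verifying $g^2/g'\to 0$, and in both cases that verification rests on the convexity bound $g(u)\le ug'(u)$. Part (i) is where you diverge: the paper assumes $g'(u)^2/(g(u)g''(u))\le q_0<1$ near the origin, deduces that $u\mapsto g'(u)g(u)^{-1/q_0}$ is increasing, integrates this inequality over $[u,u_0]$, and reaches a contradiction because $g(u)^{1-1/q_0}$ diverges as $u\to 0$; you instead observe that $q<1$ would force $P=g/g'$ to be strictly decreasing on some interval $(0,\delta)$, which is incompatible with $0<P(u)\le u$. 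Your version is shorter and avoids the antiderivative computation, while the paper's yields quantitative information (an explicit rate at which $g$ would have to vanish under the false hypothesis). In part (iii) the two proofs are cousins: the paper integrates the differential inequality $g\le\big(g^2/g'\big)'$ and needs the limit $g^2/g'\to 0$ from part (ii) to evaluate the lower endpoint, whereas your estimate $P(s)\le P(u)$ inside $G(u)=\int_0^u g'(s)P(s)\,ds$ is self-contained and does not borrow anything from (ii). One small wording point in (i): if $q=0$, the limit of $P'$ is $-\infty$ rather than a finite negative number; your argument still runs verbatim, since all it uses is that $P'<0$ on some interval $(0,\delta)$, but it would be cleaner to phrase the contradiction hypothesis that way.
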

\begin{proof}
(i) Assume by contradiction that there exist $u_0\in (0, \bar u)$ and $q_0\in (0,1)$ such that
\[
\frac{g'(u)^2}{g(u)g''(u)}\le q_0<1\ \ \textrm{for all}\ \ 0<u\le u_0.
\]
Then $u\longmapsto g'(u)g(u)^{-1/q_0}$ is increasing over $(0, u_0]$. Hence,
$$
g'(u)g(u)^{-1/q_0}\leq g'(u_0)g(u_0)^{-1/q_0}\ \ \textrm{for all}\ \ 0<u\le u_0.
$$
Integrating both sides of this inequality over $[u,u_0]$ we have 
\[
\frac{q_0}{1-q_0}\left(g(u)^{1-\frac{1}{q_0}}-g(u_0)^{1-\frac{1}{q_0}}\right)
\le\frac{g'(u_0)}{g(u_0)^{1/q_0}}(u_0-u)
\ \ \textrm{for all}\ \ 0<u\le u_0.
\]
Letting $u\to 0$, we obtain a contradiction, since the left hand side diverges as $u\to\ 0$.
Thus if the limit $q$ exists, then $q\ge 1$.\\
(ii) We show that $\lim_{u\to 0}g(u)^2/g'(u)=0$.
Using (G1) we find that $g'$ is increasing on $(0,\bu)$ so $g(u)=\int_0^ug'(s)ds\le ug'(u)$ for all $0<u<\bu$.
Then,
\[
0\le\lim_{u\to\infty}\frac{g(u)^2}{g'(u)}\le\lim_{u\to 0}\frac{g(u)ug'(u)}{g'(u)}=0.
\]
Applying the L'Hospital's rule, we have
\[
L=\lim_{u\to 0}\frac{G[u]}{g(u)^2/g'(u)}=\lim_{u\to 0}\frac{g(u)}{2g(u)-g(u)^2g''(u)/g'(u)^2}=\frac{1}{2-1/q}.
\]
Hence, $L=q/(2q-1)$.
Since $q\ge 1$, we see that $1/2<L\le 1$.\\
(iii) By (G3) we have $g(u)\le 2g(u)-g(u)^2g''(u)/g'(u)^2$.
Integrating it over $[0,u]$, by (ii) we have
\[
G[u]\le\left[\frac{g(u)^2}{g'(u)}\right]_0^u=\frac{g(u)^2}{g'(u)}\ \ \textrm{for small}\ u>0.
\]
The conclusion follows.
\end{proof}

The next result provides basic properties of regular solutions $u(r,a)$.
\begin{lemma}\label{S3L2}
Let $u(r,a)$ be a solution of (\ref{S1E4}). Then
\begin{enumerate}
\item[(i)] $u(r,a)$ satisfies:
$$
u'(r,a)>0\ \ \textrm{and}\ \ r^{\alpha}u'(r,a)^{\beta}=\int_0^r\frac{s^{\gamma-1}}{f(u(s,a))}ds
\ \ \textrm{for $r>0$ small.}
$$
\item[(ii)] $u(r,a)$ is nondecreasing in $r$ and
$$
G(u(r,a))-G(a)\ge\frac{r^{\theta}}{\theta\gamma^{1/\beta}}\quad\textrm{and}\quad
u(r,a)\ge G^{-1}\left[\frac{r^{\theta}}{\theta\gamma^{1/\beta}}+G(a)\right].
$$
\end{enumerate}
\end{lemma}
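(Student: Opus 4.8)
The plan is to work with the ``flux'' function $P(r):=r^{\alpha}|u'(r)|^{\beta-1}u'(r)$, which by the equation in \eqref{S1E4} satisfies $P'(r)=r^{\gamma-1}/f(u(r))$. Since $u(r)\to a\in(0,\bu)$ as $r\to 0$ and $f>0$ on $(0,\bu)$, we have $f(u(r))>0$ and hence $P'(r)>0$ for all small $r>0$; thus $P$ is strictly increasing near the origin and the limit $P(0^+)$ exists in $[-\infty,\infty)$. The whole of part (i) reduces to showing that this limit vanishes, after which integrating $P'(s)=s^{\gamma-1}/f(u(s))$ from $0$ to $r$ yields
\[
r^{\alpha}|u'(r)|^{\beta-1}u'(r)=\int_0^r\frac{s^{\gamma-1}}{f(u(s))}\,ds,
\]
the right-hand side being a convergent integral because $u(s)\to a$ makes the integrand comparable to $s^{\gamma-1}/f(a)$ near $0$ and $\gamma>1$. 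Since the right-hand side is strictly positive for $r>0$, it forces $u'(r)>0$, which both completes (i) and shows $|u'|^{\beta-1}u'=(u')^{\beta}$.

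The step I expect to be the crux is the vanishing of $P(0^+)$, and this is exactly where the hypothesis $\alpha>\beta$ enters. From $P(r)=r^{\alpha}|u'(r)|^{\beta-1}u'(r)$ one has $|u'(r)|=(|P(r)|/r^{\alpha})^{1/\beta}$. If $P(0^+)=L\neq 0$ (including $L=-\infty$), then by monotonicity and continuity $|P(r)|$ stays bounded below by a positive constant $c$ near the origin, so $|u'(r)|\ge c^{1/\beta}r^{-\alpha/\beta}$ there; since $\alpha>\beta$ gives $\alpha/\beta>1$, we have $\int_0^{\delta} r^{-\alpha/\beta}\,dr=\infty$, contradicting the fact that $u\in C[0,r_0)$ (so $u$ is bounded, hence $u'$ is integrable near $0$). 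Therefore $L=0$.

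For part (ii), monotonicity of $u$ is immediate from $u'>0$. To obtain the quantitative bound I would use that $g=f^{1/\beta}$ is increasing by (G1), hence so is $f=g^{\beta}$; combined with $u$ increasing this gives $f(u(s))\le f(u(r))$ for $s\le r$, so
\[
r^{\alpha}(u'(r))^{\beta}=\int_0^r\frac{s^{\gamma-1}}{f(u(s))}\,ds\ge\frac{1}{f(u(r))}\int_0^r s^{\gamma-1}\,ds=\frac{r^{\gamma}}{\gamma\,f(u(r))}.
\]
Taking $\beta$-th roots and multiplying by $g(u(r))=f(u(r))^{1/\beta}$ turns the left-hand side into $\tfrac{d}{dr}G(u(r))=g(u(r))u'(r)$, yielding $\tfrac{d}{dr}G(u(r))\ge \gamma^{-1/\beta}r^{(\gamma-\alpha)/\beta}$. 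Integrating from $0$ to $r$ and recalling $\theta=1+(\gamma-\alpha)/\beta$ together with $G(u(0))=G(a)$ gives $G(u(r,a))-G(a)\ge r^{\theta}/(\theta\gamma^{1/\beta})$; applying the increasing function $G^{-1}$ then produces the stated lower bound on $u(r,a)$.
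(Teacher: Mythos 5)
Your proof is correct and takes essentially the same route as the paper: integrate the flux identity $\left(r^{\alpha}|u'|^{\beta-1}u'\right)'=r^{\gamma-1}/f(u)$ to get (i), then use monotonicity of $f(u(\cdot,a))$ to obtain $g(u)u'\ge \gamma^{-1/\beta}r^{\theta-1}$ and integrate for (ii). The one place you go beyond the paper's own (very terse) proof of this lemma is your careful justification that the flux $r^{\alpha}|u'|^{\beta-1}u'$ vanishes at the origin, using $\alpha/\beta>1$ and the boundedness of $u$; the paper silently discards this boundary term here, but runs exactly your contradiction argument in Lemma \ref{S4L1} for the singular solution, so this is a filled-in detail rather than a different approach.
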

\begin{proof}
From (\ref{S1E4}) we have
\begin{equation}\label{S3L2E1}
\left(r^{\alpha}|u'|^{\beta-1}u'\right)'=\frac{r^{\gamma-1}}{f(u)},\quad r>0.
\end{equation}
Since $u(0,a)=a>0$, $f(u(r,a))$ is bounded away from zero for $r>0$ small so from (\ref{S3L2E1}) one gets $u'(r,a)>0$ and
$$
r^{\alpha}u'(r,a)^{\beta}=\int_0^r\frac{s^{\gamma-1}}{f(u(s,a))}ds.
$$
This proves (i).\\
Since $s\mapsto f(u(s,a))$ is increasing, for $r>0$ small we have
$$
r^{\alpha}u'(r,a)^{\beta}\ge\frac{1}{f(u(r,a))}\int_0^rs^{\gamma-1}ds=\frac{r^{\gamma}}{\gamma g(u(r,a))^{\beta}}.
$$
so that
$$
u'(r,a)g(u(r,a))\ge\frac{r^{\frac{\gamma-\alpha}{\beta}}}{\gamma^{1/\beta}}
=\frac{r^{\theta-1}}{\gamma^{1/\beta}}\ \ \textrm{for}\ r>0.
$$
Integrating the above inequality we deduce (ii).
\end{proof}

\begin{lemma}\label{S3L3}
The following hold:
\begin{enumerate}
\item[(i)] $u(r,a)$ satisfies $0<u'(r,a)\le\kappa u(r,a)/r$ for $r>0$ small,
where $\kappa=\theta(\frac{\gamma}{\alpha-\beta})^{1/\beta}>0$.
\item[(ii)] Let $\delta>0$ small and:
\begin{equation}\label{S3L3E9}
\delta_0:=\frac{\delta}{2}\left(1+\frac{\beta\kappa}{\alpha-\beta}\right)^{-1},\qquad
r_{\delta}:=\left[\frac{\delta}{2}\theta(\gamma f(\delta_0))^{1/\beta}\right]^{1/\theta}.
\end{equation}
Then, if $0<a<\delta_0$ one has
$$
u(r,a)<\delta\ \ \textrm{for all}\ \ 0<r<r_{\delta}.
$$
\end{enumerate}
\end{lemma}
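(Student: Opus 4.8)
The plan is to prove the two parts in order, with Lemma~\ref{S3L2} as the sole input. For part (i), I would start from the identity $r^\alpha u'(r,a)^\beta=\int_0^r s^{\gamma-1}/f(u(s,a))\,ds$ of Lemma~\ref{S3L2}(i) and look for an \emph{upper} bound on the right-hand integral. The obstruction is that $1/f(u(s,a))$ is large near $s=0$, so a crude bound is useless; the device I would use is the elementary inequality $g(u)\ge G(u)/u$, valid because $g$ is increasing and hence $G(u)=\int_0^u g\le u\,g(u)$. Combining this with the growth estimate $G(u(s,a))\ge s^\theta/(\theta\gamma^{1/\beta})$ from Lemma~\ref{S3L2}(ii) together with the monotonicity $u(s,a)\le u(r,a)$ for $s\le r$, one obtains
\[
f(u(s,a))=g(u(s,a))^\beta\ge\frac{s^{\theta\beta}}{\theta^\beta\gamma\,u(r,a)^\beta},
\]
so the integrand is dominated by $\theta^\beta\gamma\,u(r,a)^\beta\,s^{\gamma-1-\theta\beta}$. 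Since $\theta\beta=\beta+\gamma-\alpha$ the remaining exponent is $\alpha-\beta-1>-1$, the integral $\int_0^r s^{\alpha-\beta-1}\,ds$ converges to $r^{\alpha-\beta}/(\alpha-\beta)$, and after dividing by $r^\alpha$ and taking $\beta$-th roots this is exactly $u'(r,a)\le\kappa\,u(r,a)/r$ with $\kappa=\theta(\gamma/(\alpha-\beta))^{1/\beta}$.

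For part (ii) I would track the first level crossings. Let $R_0$ be the first $r$ with $u(R_0,a)=\delta_0$; if $u$ never reaches $\delta_0$, or if $R_0\ge r_\delta$, then $u(r,a)<\delta_0<\delta$ on $(0,r_\delta)$ and there is nothing to prove, so assume $R_0<r_\delta$ and work on $r\in(R_0,r_\delta)$. Splitting the integral at $R_0$,
\[
r^\alpha u'(r,a)^\beta=\int_0^{R_0}\frac{s^{\gamma-1}}{f(u(s,a))}\,ds+\int_{R_0}^{r}\frac{s^{\gamma-1}}{f(u(s,a))}\,ds,
\]
I would bound the first integral by $R_0^\alpha u'(R_0,a)^\beta\le\kappa^\beta\delta_0^\beta R_0^{\alpha-\beta}$ using part (i) and Lemma~\ref{S3L2}(i) at $R_0$, and the second by $r^\gamma/(\gamma f(\delta_0))$ using $u(s,a)\ge\delta_0$ and the monotonicity of $f$ on $[R_0,r]$. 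Taking $\beta$-th roots and using subadditivity of $t\mapsto t^{1/\beta}$ (here $\beta\ge1$) gives
\[
u'(r,a)\le\frac{\kappa\delta_0\,R_0^{(\alpha-\beta)/\beta}}{r^{\alpha/\beta}}+\frac{r^{\theta-1}}{(\gamma f(\delta_0))^{1/\beta}}.
\]

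The decisive point, which I expect to be the main obstacle, is to integrate this from $R_0$ to $r$ without producing a logarithm. Here the hypothesis $\alpha>\beta$ is essential: because $\alpha/\beta>1$ the integral $\int_{R_0}^r s^{-\alpha/\beta}\,ds$ is controlled by its lower endpoint and is at most $\tfrac{\beta}{\alpha-\beta}R_0^{(\beta-\alpha)/\beta}$, so the powers of $R_0$ cancel and the first term contributes the \emph{constant} $\beta\kappa\delta_0/(\alpha-\beta)$; the second term integrates to at most $r^\theta/(\theta(\gamma f(\delta_0))^{1/\beta})$. Adding $u(R_0,a)=\delta_0$ yields
\[
u(r,a)\le\delta_0\Bigl(1+\frac{\beta\kappa}{\alpha-\beta}\Bigr)+\frac{r^\theta}{\theta(\gamma f(\delta_0))^{1/\beta}},
\]
and the definitions of $\delta_0$ and $r_\delta$ in \eqref{S3L3E9} are arranged precisely so that the first summand equals $\delta/2$ while the second is strictly below $\delta/2$ for $r<r_\delta$, giving $u(r,a)<\delta$. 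The remaining care is purely bookkeeping: handling whether $u$ meets the levels $\delta_0$ and $\delta$ at all, and checking that the exact cancellation of the $R_0$-dependence is what the specific form of $\delta_0$ is built to capture.
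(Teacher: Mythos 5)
Your proposal is correct, and it reaches the same final estimates as the paper, but by a leaner route in both parts. In part (i) the paper first proves that $r\mapsto G(u(r,a))/g(u(r,a))$ is nondecreasing --- this rests on the inequality $g'G/g^2\le 1$ of Lemma~\ref{S3L1}~(iii), i.e.\ on hypothesis (G3) --- then pulls that ratio out of the integral at $r$ and only afterwards uses $G(u)\le u\,g(u)$; you obtain the same pointwise bound $1/f(u(s,a))\le u(r,a)^{\beta}/G(u(s,a))^{\beta}$ from $G(u)\le u\,g(u)$ applied at $s$ together with the monotonicity of $u$ alone, so your part (i) does not invoke (G3) at all, which is a genuine (if minor) weakening of what this step needs. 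In part (ii) the paper argues by contradiction at the first crossing of level $\delta$ and introduces an auxiliary solution $v$ of the frozen equation $\left(r^{\alpha}(v')^{\beta}\right)'=r^{\gamma-1}/f(\delta_0)$ with matched Cauchy data at the $\delta_0$-crossing $r_1$, proving $v\ge u$ by an ODE comparison and then estimating $v$; your splitting of the integral representation at $R_0=r_1$ produces exactly the same inequality $r^{\alpha}u'(r,a)^{\beta}\le R_0^{\alpha}u'(R_0,a)^{\beta}+\int_{R_0}^{r}s^{\gamma-1}/f(\delta_0)\,ds$ directly, so the comparison function and the proof that $v\ge u$ become unnecessary; from there the two computations (subadditivity of $t\mapsto t^{1/\beta}$ for $\beta\ge 1$, the cancellation of the $R_0$-powers thanks to $\alpha>\beta$, and the arithmetic built into \eqref{S3L3E9}) coincide. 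One piece of bookkeeping you should make explicit: integrating directly up to $r<r_\delta$ presupposes that $u(\,\cdot\,,a)$ exists there and stays in the range where $f$ is defined and increasing ($u<\bar u$); the cleanest fix --- and in effect what the paper's first-crossing contradiction does automatically --- is to run your estimate on the maximal subinterval of $(0,r_\delta)$ on which $u<\delta$ and conclude from the resulting strict bound $u<\delta$ that this subinterval cannot terminate before $r_\delta$.
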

\begin{proof}
(i) We claim first that $r\mapsto\frac{G(u(r,a)}{g(u(r,a))}$ is increasing for $r>0$ small.
Indeed, by Lemma~\ref{S3L1} (iii) and $u'(r,a)\ge 0$ we have
$$
\frac{d}{dr}\left[\frac{G(u(r,a))}{g(u(r,a))}\right]=\left(1-\frac{G(u(r,a))g'(u(r,a))}{g(u(r,a))^2}\right) u'(r,a)\ge 0.
$$
Next, using Lemma~\ref{S3L2}~(ii) we further estimate
\begin{equation}\label{S3L3E10}
r^{\alpha}u'(r,a)^{\beta}=\int_0^r\left[\frac{G(u(s,a))}{g(u(s,a))}\right]^{\beta}\frac{s^{\gamma-1}}{G(u(r,a))^{\beta}}ds
\le \left[\frac{G(u(r,a))}{g(u(r,a))}\right]^{\beta}\int_0^r\frac{s^{\gamma-1}}{G(u(r,a))^{\beta}}ds.
\end{equation}
From Lemma~\ref{S3L2}~(ii) one has $G(u(r,a))\ge\frac{r^{\theta}}{\theta\gamma^{1/\beta}}$ which yields
\begin{equation}\label{S3L3E11}
G(u(s,a))^{\beta}\ge\frac{s^{\gamma-\alpha+\beta}}{\theta^{\beta}\gamma}\ \ \textrm{for $s>0$ small.}
\end{equation}
Also, since $g$ is increasing in a small neighborhood of the origin we have
\begin{equation}\label{S3L3E12}
G(u(r,a))\le g(u(r,a))u(r,a)\ \ \textrm{for $r>0$ small}.
\end{equation}
Using (\ref{S3L3E11}) and (\ref{S3L3E12}) in estimate (\ref{S3L3E10}) we further deduce
\begin{multline*}
r^{\alpha}u'(r,a)^{\beta}\le u(r,a)^{\beta}\int_0^r\frac{s^{\gamma-1}}{G(u(s,a))^{\beta}}ds
\le\theta^{\beta}\gamma u(r,a)^{\beta}\int_0^rs^{\alpha-\beta-1}ds\\
=\frac{\theta^{\beta}\gamma}{\alpha-\beta}r^{\alpha-\beta}u(r,a)^{\beta}\ \ \textrm{for $r>0$ small}.
\end{multline*}
This yields
$$
u'(r,a)\le\theta\left(\frac{\gamma}{\alpha-\beta}\right)^{1/\beta}\frac{u(r,a)}{r}
\ \ \textrm{for $r>0$ small}.
$$
(ii) Assume by contradiction that there exist $a\in (0,\delta_0)$ and $r_*\in (0,r_{\delta})$ so that
\begin{equation}\label{S3L3E13}
u(r,a)<\delta\ \ \textrm{on}\ \ (0,r_*)\ \ \textrm{and}\ \ u(r_*)=\delta.
\end{equation}
Since $u$ is increasing on $(0,r_*)$, there exists a unique $r_1\in (0,r_*)$ so that
\begin{equation}\label{S3L3E14}
u(r_1,a)=\delta_0\ \ \textrm{and}\ \ u(r,a)>\delta_0\ \ \textrm{for all}\ r\in (r_1,r_*].
\end{equation}
Consider the problem
\begin{equation}\label{S3L3E15}
\begin{cases}
\left(r^{\alpha}v'(r)^{\beta}\right)'=\frac{r^{\gamma-1}}{f(\delta_0)} & \textrm{on}\ (r_1,r_*),\\
v(r_1)=\delta_0=u(r_1,a),\\
v'(r_1)=u'(r_1,a).
\end{cases}
\end{equation}
From (\ref{S1E4}) and (\ref{S3L3E15}) one has
$$
r^{\alpha}\left(v'(r)^{\beta}-u'(r,a)^{\beta}\right)'
=r^{\gamma-1}\left(\frac{1}{f(\delta_0)}-\frac{1}{f(u(r,a))}\right)>0\ \ \textrm{on}\ \ (r_1,r_*),
$$
since, by (\ref{S3L3E14}) one has $u(\,\cdot\,,a)>\delta_0$ on $(r_1,r_*]$.
By integration in the above inequality it follows that $v'>u'(\,\cdot\,,a)$ on $(r_1,r_*]$.
Since $v(r_1)=u(r_1,a)=\delta_0$, by (\ref{S3L3E15}) one further obtains $v\ge u(\,\cdot\,,a)$ on $[r_1,r_*]$.

Let us now return to the main equation of (\ref{S3L3E15}) which by integration yields
$$
r^{\alpha}v'(r)^{\beta}-r_1^{\alpha}v'(r_1)^{\beta}
=\int_{r_1}^r\frac{s^{\gamma-1}}{f(\delta_0)}ds
<\frac{r^{\gamma}}{\gamma f(\delta_0)}\ \ \textrm{for all}\ \ r_1\le r\le r_*.
$$
Hence,
$$
v'(r)\le\left(\frac{r_1}{r}\right)^{\alpha/\beta}v'(r_1)+\frac{1}{\left(\gamma f(\delta_0)\right)^{1/\beta}}r^{\theta-1}\ \ \textrm{for all}\ \ r_1\le r\le r_*.
$$
Integrating the above inequality over $[r_1,r_*]$ implies
$$
v(r_*)-v(r_1)\le\frac{\beta}{\alpha-\beta}r_1^{\alpha/\beta}v'(r_1)
\left( r_1^{1-\frac{\alpha}{\beta}}-r_*^{1-\frac{\alpha}{\beta}}\right)
+\frac{1}{\theta\left(\gamma f(\delta_0)\right)^{1/\beta}}r_*^{\theta}.
$$
Thus,
\begin{equation}\label{S3L3E16}
v(r_*)<\frac{\beta}{\alpha-\beta}r_1v'(r_1)
+\frac{1}{\theta\left(\gamma f(\delta_0)\right)^{1/\beta}}r_*^{\theta}
+v(r_1).
\end{equation}
By (\ref{S3L3E14}), (\ref{S3L3E15}) and part (i) above one has
\begin{equation}\label{S3L3E17}
r_1v'(r_1)=r_1u'(r_1,a)\le\kappa u(r_1,a)=\kappa\delta_0.
\end{equation}
Also from (\ref{S3L3E9}) one has
\begin{equation}\label{S3L3E18}
\delta_0\left(1+\frac{\beta\kappa}{\alpha-\beta}\right)=\frac{\delta}{2}
\quad\textrm{and}\quad
r_*^{\theta}<r_{\delta}^{\theta}=\frac{\delta}{2}\theta(\gamma f(\delta_0))^{1/\beta}.
\end{equation}
We combine (\ref{S3L3E13}), (\ref{S3L3E16}), (\ref{S3L3E17}), (\ref{S3L3E18}) and $v\ge u(\,\cdot\,,a)$ to get
$$
\delta=u(r_*,a)\le v(r_*)<\delta_0\left(1+\frac{\beta\kappa}{\alpha-\beta}\right)+\frac{1}{\theta(\gamma f(\delta_0))^{1/\beta}}r_{\delta}^{\theta}=\delta,
$$
which is a contradiction.
This concludes the proof of part (ii).
\end{proof}

\begin{proof}[Proof of Theorem 1(Existence)]
Take $\{a_n\}\subset (0,\infty)$ a decreasing sequence of positive real numbers that converges to zero.
By Lemma~\ref{S3L3}~(ii) one may find $r_0>0$ such that $\{u(\,\cdot\,,a_n)\}$ is uniformly bounded on any compact interval $I\subset (0,r_0)$.
By the estimate in Lemma~\ref{S3L2}~(ii) one has that $\{u(\,\cdot\,,a_n)\}$ is bounded away from zero on any compact interval $I\subset (0,r_0)$.
Also, integrating in (\ref{S1E4}) one has
\begin{equation}\label{S3L3E19}
r^{\alpha}u'(r,a_n)^{\beta}=\rho u'(\rho,a_n)^{\beta}
+\int_{\rho}^r\frac{s^{\gamma-1}}{f(u(s,a_n))}ds
\ \ \textrm{for all}\ 0<\rho< r<r_0.
\end{equation}
This shows that $\{ u'(\,\cdot\,,a_n)\}$ is uniformly bounded on any compact interval $I\subset (0,r_0)$, since it follows from Lemma~\ref{S3L3}~(i) that $u'(\rho,a_n)$ is bounded uniformly in $n$.
Differentiating in (\ref{S3L3E19}) it follows that $\{u''(\,\cdot\,,a_n)\}$ and $\{u'''(\,\cdot\,,a_n)\}$ are uniformly bounded on any compact interval $I\subset (0,r_0)$.
Now, using a diagonal process and Arzel\`{a}-Ascoli theorem, there exists a subsequence of $\{a_n\}$, which is still denoted by $\{a_n\}$ in the following, and $u^*\in C^2(0,r_0)\cap C[0,r_0)$ such that
\begin{equation}\label{S3L3E20}
\lim_{n\to\infty}u(\,\cdot\,,a_n)=u^*\ \ \textrm{in}\ \ C^2_{loc}(0,r_0)\cap C_{loc}[0,r_0).
\end{equation}
By Lemma~\ref{S3L2}~(ii) it follows that $u^*>0$ in $(0,r_0)$ and by Lemma~\ref{S3L3}~(ii) one has $u^*(0)=\lim_{r\to 0}u^*(r)=0$.
Thus, $u^*$ is a solution of (\ref{S1E1}).
\end{proof}

\section{Uniqueness of degenerate solution and convergence}
\begin{lemma}\label{S4L1}
Let $u(r)$ be a solution of (\ref{S1E1}). The following hold:
\begin{enumerate}
\item[(i)] $u'>0$ in $(0,r_0)$ and
$$
r^{\alpha}u'(r)^{\beta}=\int_0^r\frac{s^{\gamma-1}}{f(u(s))}ds\ \ \textrm{for all}\ \ r\in(0,r_0).
$$
\item[(ii)] For $r>0$ small we have
$$
G(u(r))\ge\frac{r^{\theta}}{\theta\gamma^{1/\beta}}\ \ \textrm{and hence}\ \ 
u\ge G^{-1}\left[\frac{r^{\theta}}{\theta\gamma^{1/\beta}}\right].
$$
\end{enumerate}
\end{lemma}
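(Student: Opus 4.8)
The plan is to mirror the proof of Lemma~\ref{S3L2}, the only genuinely new difficulty being that here $u(0)=0$, so $f(u(s))\to 0$ as $s\to 0$ and the integral $\int_0^r s^{\gamma-1}/f(u(s))\,ds$ becomes improper at the origin. First I would record that, from the equation in \eqref{S1E1}, the quantity $\Phi(r):=r^{\alpha}|u'(r)|^{\beta-1}u'(r)$ satisfies $\Phi'(r)=r^{\gamma-1}/f(u(r))>0$ on every compact subinterval of $(0,r_0)$: there $u$ is continuous and strictly positive, hence bounded away from zero, so $f(u)$ is bounded away from zero. Thus $\Phi$ is strictly increasing on $(0,r_0)$.

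To prove $u'>0$ I would argue by contradiction. If $u'(r_1)\le 0$ at some $r_1$, then $\Phi(r_1)\le 0$, and monotonicity of $\Phi$ forces $\Phi(r)<0$, i.e. $u'(r)<0$, for all $r\in(0,r_1)$. Then $u$ is strictly decreasing on $(0,r_1)$, so $u(r)\ge u(r_1)>0$ there; letting $r\to 0$ and using $u(0)=0$ gives $0\ge u(r_1)>0$, a contradiction. Hence $\Phi(r)=r^{\alpha}u'(r)^{\beta}>0$ on $(0,r_0)$.

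The crux is then the vanishing of the boundary term. Since $\Phi$ is positive and increasing, $\ell:=\lim_{\rho\to 0^+}\Phi(\rho)\ge 0$ exists. If $\ell>0$, then $u'(r)\ge \ell^{1/\beta}r^{-\alpha/\beta}$ for small $r$; integrating from $\rho$ to a fixed $r$ and using $\alpha/\beta>1$ (valid because $\alpha>\beta$) makes the right-hand side blow up like $\rho^{1-\alpha/\beta}\to+\infty$ as $\rho\to 0$, which would force $u(\rho)\to-\infty$, contradicting $u(\rho)\to u(0)=0$. Hence $\ell=0$, and letting $\rho\to 0$ in $\Phi(r)-\Phi(\rho)=\int_\rho^r s^{\gamma-1}/f(u(s))\,ds$ yields the identity in (i), with the improper integral convergent. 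This is the most delicate point, and it is exactly where $\alpha>\beta$ is needed to kill the contribution at the origin.

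Finally, part (ii) follows as in Lemma~\ref{S3L2}(ii). Using that $g=f^{1/\beta}$ is increasing by (G1), the map $s\mapsto f(u(s))$ is nondecreasing, so from (i) I would bound $r^{\alpha}u'(r)^{\beta}\ge \frac{1}{f(u(r))}\int_0^r s^{\gamma-1}\,ds=\frac{r^{\gamma}}{\gamma g(u(r))^{\beta}}$, whence $u'(r)g(u(r))\ge r^{\theta-1}/\gamma^{1/\beta}$. Integrating over $(0,r)$ and using $G(u(0))=G(0)=0$ gives $G(u(r))\ge r^{\theta}/(\theta\gamma^{1/\beta})$, and applying the increasing function $G^{-1}$ produces the stated lower bound for $u$.
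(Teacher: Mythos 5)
Your proof is correct and follows essentially the same route as the paper: both rely on the monotonicity of $\Phi(r)=r^{\alpha}|u'|^{\beta-1}u'$, rule out negative sign of $u'$ near the origin via $u(0)=0$ and positivity, rule out a positive limit $\ell>0$ via the divergence of $\int \rho^{-\alpha/\beta}$ (using $\alpha>\beta$), and then integrate the same pointwise bound $u'g(u)\ge r^{\theta-1}/\gamma^{1/\beta}$ for part (ii). The only difference is cosmetic: you establish $u'>0$ directly before identifying $\ell=0$, whereas the paper first shows $\ell=0$ and reads off $u'>0$ from the resulting integral identity.
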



\begin{proof}
(i) From (\ref{S1E1}) it follows that $r\mapsto r^{\alpha}|u'(r)|^{\beta-1}u'(r)$ is increasing, and hence there exists $\ell:=\lim_{r\to 0}r^{\alpha}|u'(r)|^{\beta-1}u'(r)\in [-\infty,\infty)$.
If $\ell<0$ then $u'<0$ in a neighborhood of zero.
This further implies $u(r)<u(0)=0$ for $r>0$ small which is impossible.
If $\ell>0$ then one can find $\e>0$ and $\rho\in (0,a)$ so that 
$$
r^{\alpha}u'(r)^{\beta}\ge \e\ \ \textrm{for all}\ \ 0<r\le\rho.
$$
By integration one gets 
$$
u(\rho)\ge u(r)+\e^{1/\beta}\int_r^{\rho}s^{-\alpha/\beta}ds\to\infty\ \ \textrm{as}\ \ r\to 0,
$$
again contradiction.
Hence $\ell=\lim_{r\to 0}r^{\alpha}|u'(r)|^{\beta-1}u'(r)=0$.

Using this fact and integrating in (\ref{S1E1}) we find
$$
r^{\alpha}|u'(r)|^{\beta-1}u'(r)=\int_0^r\frac{s^{\gamma-1}}{f(u(s))}ds\ \ \textrm{for all}\ \ 0<r<r_0.
$$
This yields $u'>0$ and concludes the proof of part (i).\\
(ii) Since $u$ is increasing on $(0,r_0)$ it follows that $s\mapsto f(u(s))$ is increasing for small $s>0$.
Thus, from (i) above we find
$$
r^{\alpha}u'(r)^{\beta}\ge\frac{1}{f(u(r))}\int_0^rs^{\gamma-1}ds=\frac{r^{\gamma}}{\gamma f(u(r)))}
\ \ \textrm{for $r>0$ small}.
$$
Then
$$
u'(r)g(u(r))\ge\frac{r^{\theta-1}}{\gamma^{1/\beta}}\ \ \textrm{for $r>0$ small}.
$$
Integrate in the above inequality to further deduce (ii).
\end{proof}

\begin{lemma}\label{S4L3}
Any degenerate solution $u$ of (\ref{S1E1}) satisfies
\begin{equation}\label{S4L3E22}
\frac{1}{\theta\gamma^{1/\beta}}\le\liminf_{r\to 0}\frac{G(u(r))}{r^{\theta}}<\infty.
\end{equation}
\end{lemma}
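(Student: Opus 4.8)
\emph{Lower bound.} This is immediate: Lemma~\ref{S4L1}~(ii) gives $G(u(r))\ge r^\theta/(\theta\gamma^{1/\beta})$ for all small $r>0$, so $G(u(r))/r^\theta\ge 1/(\theta\gamma^{1/\beta})$ there and the same bound survives in the $\liminf$. The whole difficulty is the finiteness on the right, and the plan is to prove the slightly stronger assertion $\limsup_{r\to 0}G(u(r))/r^\theta<\infty$ by a one-shot comparison with the exact power profile, \emph{without} any Gr\"onwall/differential-inequality loop (such a loop only reproduces the bound $u'\le\kappa u/r$ of Lemma~\ref{S3L3} and hence yields lower, not upper, bounds on $u$).

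The mechanism is that a lower bound on $u$ forces, through the equation, an upper bound on $u'$ and hence on $u$ itself. Concretely, Lemma~\ref{S4L1}~(ii) already provides the universal lower bound $u(s)\ge G^{-1}(c s^\theta)$ with $c=1/(\theta\gamma^{1/\beta})$; since $g$ is increasing by (G1), inserting it into the integral identity of Lemma~\ref{S4L1}~(i) gives, for small $r>0$,
\[
r^\alpha u'(r)^\beta=\int_0^r\frac{s^{\gamma-1}}{g(u(s))^\beta}\,ds\le\int_0^r\frac{s^{\gamma-1}}{g\!\left(G^{-1}(c s^\theta)\right)^\beta}\,ds=:J(r).
\]
I would then take the $\beta$-th root, integrate $u'$ from $0$ to $r$ (using $u(0)=0$), and finally apply the increasing function $G$ to the resulting bound on $u(r)$; the target is an estimate of the form $G(u(r))\le C\,r^\theta$, which is exactly the desired finiteness. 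The point is that this argument is decoupled: unlike the $\psi=G/g$ computation behind Lemma~\ref{S3L3}, the right-hand side $J(r)$ depends on $s$ explicitly, so no circularity arises and the correct power is retained.

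It therefore remains to estimate $J(r)$ as $r\to 0$. The natural substitution $y=G^{-1}(c s^\theta)$, i.e. $c s^\theta=G(y)$, turns it into
\[
J(r)=\frac{c^{-\gamma/\theta}}{\theta}\int_0^{G^{-1}(c r^\theta)}G(y)^{\gamma/\theta-1}\,g(y)^{1-\beta}\,dy,
\]
reducing everything to the small-$y$ behaviour of $G(y)^{\gamma/\theta-1}g(y)^{1-\beta}$. This is where hypotheses (G1)--(G3) enter through Lemma~\ref{S3L1}: the limits $g'^2/(gg'')\to q$ and $g'G/g^2\to L=q/(2q-1)$ pin $g$, $G$ and $g\circ G^{-1}$ down to be regularly varying at the origin with indices governed by $q$ and $L$, the solvable model $g_q$, $G_q$ of \eqref{S1E11} supplying the leading profile (e.g. $g(G^{-1}(v))^\beta\asymp v^{p\beta/(p+1)}$ in the pure-power case $f(u)=u^m$, where $J(r)\asymp r^{(\gamma+p(\alpha-\beta))/(p+1)}$ and an explicit computation gives $u(r)\le C r^{\theta/(p+1)}$, hence $G(u(r))\le C r^\theta$). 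The main obstacle is carrying out this asymptotic analysis for general admissible $f$: in particular one must check the integrability of the integrand at $y=0$ and of $(J(r)/r^\alpha)^{1/\beta}$ at $r=0$, and separately handle the borderline case $q=1$ (so $L=1$), where the power scaling degenerates and the comparison must be run against $g_1=G_1=e^{\,\cdot\,}$ as flagged after \eqref{S1E11}.
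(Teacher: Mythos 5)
Your lower-bound step is correct and coincides with the paper's (it is just Lemma~\ref{S4L1}~(ii) restated). The genuine gap is in the upper bound, and it is not merely that the ``asymptotic analysis for general admissible $f$'' is deferred: that analysis is exactly where the plan breaks, and it breaks irreparably in the case $q=1$, which the lemma must cover (Examples 4 and 5 of the paper, e.g. $f(u)=e^{-1/u^m}$ in \eqref{f2}). The only lower bound available at this stage is $u(s)\ge G^{-1}(cs^{\theta})$ with the crude constant $c=1/(\theta\gamma^{1/\beta})$, whereas the sharp constant is $1/A$, which for $q=1$ (so $L=1$ in \eqref{E7}) equals $1/(\theta(\alpha-\beta)^{1/\beta})>c$. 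If you push your one-shot scheme through for \eqref{f2}, the integrations are asymptotically sharp and you land on
\[
u(r)\le K\,G^{-1}\!\left(c\,r^{\theta}\right)(1+o(1))\quad\textrm{as}\ r\to 0,
\qquad K=\left(\frac{\gamma}{\alpha-\beta}\right)^{1/\beta}>1,
\]
i.e.\ the multiplicative gap between $c$ and $1/A$ survives intact. For $q>1$ this is harmless, because (G2) forces $g$ to be regularly varying of index $p$, so $G(Ky)\le CK^{p+1}G(y)$ and your conclusion $G(u(r))\le Cr^{\theta}$ follows (your plan can indeed be completed by Karamata-type arguments in that case). But for $q=1$ the function $G$ is rapidly varying: for \eqref{f2} one has $G(y)\asymp\frac{\beta}{m}y^{m+1}e^{-1/(\beta y^{m})}$, hence $G(Ky)\approx G(y)^{1/K^{m}}$ up to slowly varying factors, and your bound only yields
\[
G(u(r))\lesssim |\log r|^{-\frac{m+1}{m}}\,r^{\theta/K^{m}},\qquad \frac{\theta}{K^{m}}<\theta,
\]
which gives no control whatsoever on $G(u(r))/r^{\theta}$. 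The constant loss is exponentiated by $G$, and no one-shot comparison starting from $c$ can recover the power $\theta$; upgrading $c$ to the sharp $1/A$ is precisely the content of the later asymptotic analysis (Proposition~\ref{S4P5} and Corollary~\ref{C1}), which is proved \emph{after}, and partly by means of, this lemma. For the same reason your suggestion that the solvable pair $g_{1}=G_{1}=e^{(\cdot)}$ supplies the ``leading profile'' cannot work: $G_{1}$ does not vanish at $0$ and is not multiplicatively comparable to $G$ near $0$.

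The paper sidesteps all of this by proving only what the lemma asserts --- finiteness of the \emph{liminf}, not of the limsup --- and by doing so through a contradiction argument that is insensitive to constants. Assuming $G(u(r))/r^{\theta}\to\infty$, it fixes an arbitrary $\e\in(0,\theta-1)$ and shows, using (G3) and the monotonicity of $G/g$ along the solution, that $u'g(u)/G(u)\le\e/r$, hence $G(u(r))\ge c\,r^{\e}$ for small $r$; then, via $G(u)\le u\,g(u)$, it deduces $u'/u\le Cr^{\theta-1-\e}$, whose integrability at $r=0$ forces $\log\bigl(u(R)/u(r)\bigr)$ to remain bounded as $r\to 0$, contradicting $u(0)=0$. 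This needs no asymptotics of $g\circ G^{-1}$, no regular variation, and treats $q=1$ and $q>1$ identically. So as written, your proposal establishes (modulo the Karamata details) a statement stronger than the lemma but only for $q>1$; to prove the lemma itself you still need a separate, robust argument for $q=1$, for which the paper's contradiction scheme is both shorter and more general.
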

\begin{proof}
Since $r\mapsto g(u(r))$ is increasing for $r>0$ small, we have
$$
r^{\alpha}u'(r)^{\beta}
=\int_0^r\frac{s^{\gamma-1}}{f(u(s))}ds
\ge\frac{1}{g(u(r))^{\beta}}\int_0^r s^{\gamma-1}ds
=\frac{1}{\gamma g(u(r))^{\beta}}r^{\gamma}
\ \ \textrm{for $r>0$ small.}
$$
This yields
$$
u'g(u)\ge\frac{1}{\gamma^{1/\beta}}r^{\theta-1}
\ \ \textrm{for $r>0$ small.}
$$
Integrate the above estimate to get
$$
G(u(r))\ge\frac{1}{\theta\gamma^{1/\beta}}r^{\theta}
\ \ \textrm{for $r>0$ small}
$$
and this proves the lower bound in (\ref{S4L3E22}).
To prove the upper bound in (\ref{S4L3E22}) we argue by contradiction and assume $\liminf_{r\to 0}\frac{G(u(r))}{r^{\theta}}=\infty$.
This means
\begin{equation}\label{S4L3E24}
\lim_{r\to 0}\frac{G(u(r))}{r^{\theta}}=\infty.
\end{equation}
To raise a contradiction, we proceed in $3$ steps.\\
\underline{Step 1}: $r\mapsto\frac{G(u(r))}{g(u(r))}$ is increasing for $r>0$ small.\\
This argument has already been encountered when we studied regular solutions.
Using (G3) and $u'>0$ we have
$$
\frac{d}{dr}\left(\frac{G(u(r))}{g(u(r))}\right)
=\left[1-\frac{g'(u(r))G(u(r))}{g(u(r))^2}\right]u'(r)\ge 0
\ \ \textrm{for $r>0$ small.}
$$
\underline{Step 2}: There exists $\e\in(0,1)$ and $c>0$ so that $G(u(r))\ge cr^{\e}$ for $r>0$ small.\\
Take any $\e\in (0,\theta-1)$.
Using (\ref{S4L3E24}) we can find $R\in(0,r_0)$ such that
$$
\frac{G(u(r))}{r^{\theta}}>\frac{1}{(\alpha-\beta)^{1/\beta}\e}
\ \ \textrm{for all}\ 0<r\le R.
$$
Using this last estimate and Step 1, from Lemma~\ref{S4L1}~(i) we have
\begin{multline*}
r^{\alpha}u'(r)^{\beta}=\int_0^r\left[\frac{G(u(s))}{g(u(s))}\right]^{\beta}\left[\frac{s^{\theta}}{G(u(s))}\right]^{\beta}s^{\gamma-\theta\beta-1}ds\\
\le\e^{\beta}(\alpha-\beta)\left[\frac{G(u(r))}{g(u(r))}\right]^{\beta}\int_0^rs^{\alpha-\beta-1}ds
=\e^{\beta}\left[\frac{G(u(r))}{g(u(r))}\right]^{\beta}r^{\alpha-\beta}\ \ \textrm{for $r>0$ small.}
\end{multline*}
This yields
$$
\frac{u'(r)g(u(r))}{G(u(r))}\le\frac{\e}{r}\ \ \textrm{for all}\ 0<r\le R,
$$
so that
$$
\frac{d}{dr}\left[\log\frac{G(u(r))}{r^{\e}}\right]\le 0\ \ \textrm{for all}\ 0<r\le R.
$$
Integrating over $[r,R]$ one obtains
\begin{equation}\label{S4L3E25}
\frac{G(u(r))}{r^{\e}}\ge c:=\frac{G(u(R))}{R^{\e}}>0\ \ \textrm{for all}\ 0<r\le R.
\end{equation}
\underline{Step 3}: Conclusion of the proof.\\
From (\ref{S4L3E25}) we find, using $G(u)\le ug(u)$, that
$$
\frac{1}{f(u(r))}=\frac{1}{g(u(r))^{\beta}}\le 
\left[\frac{u(r)}{cr^{\e}}\right]^{\beta}
\ \ \textrm{for}\ 0<r\le R.
$$
Then,
$$
r^{\alpha}u'(r)^{\beta}
=\int_0^r\frac{s^{\gamma-1}}{f(u(s))}ds
\le \frac{1}{c^{\beta}}	\int_0^r s^{\gamma-\beta\e-1}u(s)^{\beta}ds
\le \frac{r^{\gamma-\e\beta}u(r)^{\beta}}{c^{\beta}(\gamma-\e\beta)}\ \ \textrm{for}\ 0<r\le R.
$$
This implies
$$
\frac{u'(r)}{u(r)}\le C r^{\theta-1-\e}\ \ \textrm{for}\ 0<r\le R,
$$
for some $C>0$.
Integrate the above inequality over $[r, R]$ to deduce
$$
\log\frac{u(R)}{u(r)}
\le\frac{C}{\theta-\e}\left[R^{\theta-\e}-r^{\theta-\e}\right]
\le\frac{CR^{\theta-\e}}{\theta-\e}
\ \ \textrm{for all}\ 0<r\le R.
$$
Letting $r\to 0$ in the above estimate we raise a contradiction, since $u(0)=0$.
Hence (\ref{S4L3E22}) holds and this completes the proof of Lemma~\ref{S4L3}.
\end{proof}
Let $u$ be a solution of (\ref{S1E1}).
We next introduce the scaling
\begin{equation}\label{S4E26}
e^z=\frac{A}{r^{\theta}}G(u(r))\quad\textrm{where}\quad t=-\log r\in (-\log r_0,\infty)
\end{equation}
and $\theta=1+\frac{\gamma-\alpha}{\beta}\ge 2$ and $A=\theta\{\gamma-L(\gamma-\alpha+\beta)\}^{1/\beta}$ are given by equations (\ref{E6}) and (\ref{E7}).
\begin{lemma}
We have
\begin{equation}\label{S4L3+E27}
\theta-\theta\left(\frac{\gamma}{\alpha-\beta}\right)^{1/\beta}\le z_t
\le\theta\ \ \textrm{for large}\ t>0.
\end{equation}
\end{lemma}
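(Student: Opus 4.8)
The plan is to differentiate the defining relation \eqref{S4E26} and reduce both inequalities to a single pointwise estimate on the logarithmic derivative of $u$. Writing $z=\log A+\theta t+\log G(u(r))$ (using $t=-\log r$, hence $r=e^{-t}$ and $dr/dt=-r$), and recalling $G'=g$, a direct computation gives
\[
z_t=\theta+\frac{g(u(r))\,u'(r)}{G(u(r))}\cdot\frac{dr}{dt}=\theta-\frac{r\,g(u(r))\,u'(r)}{G(u(r))}.
\]
Since $u$ solves \eqref{S1E1}, Lemma~\ref{S4L1}~(i) gives $u'>0$, while $g(u)>0$ and $G(u)>0$ for $r>0$ small; therefore the subtracted term is nonnegative and the upper bound $z_t\le\theta$ follows immediately.

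For the lower bound it remains to show that $\dfrac{r\,g(u)\,u'}{G(u)}\le\kappa:=\theta\left(\dfrac{\gamma}{\alpha-\beta}\right)^{1/\beta}$ for small $r>0$. I would mimic the argument already carried out for regular solutions in Lemma~\ref{S3L3}~(i), now invoking the singular-solution versions of the estimates. First, by (G3) and $u'>0$ the map $r\mapsto G(u(r))/g(u(r))$ is nondecreasing for small $r$, exactly as in Step~1 of Lemma~\ref{S4L3}. Then, starting from the integral identity in Lemma~\ref{S4L1}~(i) and inserting $[G(u)/g(u)]^{\beta}$, monotonicity lets me pull the factor $[G(u(r))/g(u(r))]^{\beta}$ outside the integral, yielding
\[
r^{\alpha}u'(r)^{\beta}\le\left[\frac{G(u(r))}{g(u(r))}\right]^{\beta}\int_0^r\frac{s^{\gamma-1}}{G(u(s))^{\beta}}\,ds.
\]
Finally I would apply the lower bound $G(u(s))\ge s^{\theta}/(\theta\gamma^{1/\beta})$ from Lemma~\ref{S4L1}~(ii); since $\theta\beta=\beta+\gamma-\alpha$, the exponent in the integrand becomes $s^{\alpha-\beta-1}$, so the integral is bounded by $\frac{\theta^{\beta}\gamma}{\alpha-\beta}r^{\alpha-\beta}$. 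Taking $\beta$-th roots gives $u'(r)\le\kappa\,G(u(r))/(r\,g(u(r)))$, which rearranges to the desired inequality and hence to $z_t\ge\theta-\kappa$.

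The computation of $z_t$ is routine, and the upper bound is free; the substance of the proof is the inequality $r\,g(u)\,u'/G(u)\le\kappa$. Its only delicate ingredient is the monotonicity of $G(u)/g(u)$, which rests on hypothesis (G3) (equivalently $g'G/g^2\le 1$ for small $u$, cf.\ Lemma~\ref{S3L1}), together with the a priori lower bound on $G(u)$ from Lemma~\ref{S4L1}~(ii). I expect the main point to verify carefully is that both of these facts are available uniformly on a one-sided neighborhood of the origin for the singular solution $u^*$, so that the "for $r>0$ small" qualifier is genuinely justified; once that is in place the estimate is identical in form to the regular-solution case and the two-sided bound \eqref{S4L3+E27} follows.
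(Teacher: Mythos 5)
Your proposal is correct and follows essentially the same route as the paper: differentiate $z=\log A+\theta t+\log G(u(r))$ to get $z_t=\theta-\frac{rg(u)u'}{G(u)}$, obtain the upper bound from $u'>0$, and obtain the lower bound by combining the integral identity of Lemma~\ref{S4L1}~(i), the monotonicity of $r\mapsto G(u(r))/g(u(r))$ (from (G3) via Lemma~\ref{S3L1}), and the lower bound $G(u(s))\ge s^{\theta}/(\theta\gamma^{1/\beta})$ to show $\frac{rg(u)u'}{G(u)}\le\theta\left(\frac{\gamma}{\alpha-\beta}\right)^{1/\beta}$. The only cosmetic difference is that you pull $[G/g]^{\beta}$ out of the integral before inserting the bound on $G(u(s))$, whereas the paper bounds both factors inside the integrand simultaneously; the estimates and lemmas invoked are identical.
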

\begin{proof}
Note that from (\ref{S4E26}) we have
$$
z(t)=\log A+\theta t+\log G(u(r))\ \ \textrm{for large}\ t>-\log r_0.
$$
Using the fact that $\frac{d}{dt}=-r\frac{d}{dr}$ we differentiate in the above equality to deduce
\begin{equation}\label{S4L3+E28}
z_t(t)=\theta-\frac{rg(u(r))u'(r)}{G(u(r))}\le\theta
\ \ \textrm{for large}\ t>0.
\end{equation}
On the other hand, by (\ref{S4L3E22}) we have
$$
\frac{G(u(r))}{r^{\theta}}\ge\frac{1}{\theta\gamma^{1/\beta}}
\ \ \textrm{for $r>0$ large.}
$$
Using this estimate and the fact that $r\mapsto\frac{G(u(r))}{g(u(r))}$ is nondecreasing for small $r>0$, from Lemma~\ref{S4L1}~(i) we have
\begin{align*}
r^{\alpha}u'(r)^{\beta}
&=\int_0^r\left[\frac{G(u(s))}{g(u(s))}\right]^{\beta}\left[\frac{s^{\theta}}{G(u(s))}\right]^{\beta}s^{\gamma-\theta\beta-1}ds\\
&\le\left[\frac{G(u(r))}{g(u(r)}\right]^{\beta}\frac{\theta^{\beta}\gamma}{\alpha-\beta}r^{\alpha-\beta}
\ \ \textrm{for $r>0$ small.}
\end{align*}
Hence
$$
\frac{r u'(r)g(u(r))}{G(u(r))}\le\theta\left(\frac{\gamma}{\alpha-\beta}\right)^{1/\beta}
\ \ \textrm{for $r>0$ small.}
$$
Using this estimate in (\ref{S4L3+E28}) we deduce the lower bound in (\ref{S4L3+E27}).
\end{proof}

\begin{lemma}\label{S4L4}
$z$ satisfies
\begin{equation}\label{S4L4E29}
z_{tt}-az_t+b(1-e^{-\beta z})+(1-L)z_t^2+T(z)=0,
\end{equation}
where
\begin{equation}\label{S4L4E30}
a=2\theta(1-L)+\frac{\alpha}{\beta}-1>0,\qquad
b=\theta\left\{\theta(1-L)+\frac{\alpha}{\beta}-1\right\}>0,
\end{equation}
\begin{equation}\label{S4L4E32}
T(z)=\left[L-\frac{g'(u)G(u)}{g(u)^2}\right](z_t-\theta)^2
+b\left[ 1-\left(1-\frac{z_t}{\theta}\right)^{1-\beta}\right] e^{-\beta z}.
\end{equation}
\end{lemma}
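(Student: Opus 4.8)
The plan is to establish \eqref{S4L4E29} by differentiating the first-order relation \eqref{S4L3+E28} once more and then eliminating every derivative of $u$ through the equation in \eqref{S1E1} (recall $f=g^{\beta}$). Throughout I set $V:=\dfrac{rg(u)u'}{G(u)}$, so that \eqref{S4L3+E28} reads $z_t=\theta-V$ and hence $z_{tt}=-V_t$, and I abbreviate $\ell:=\dfrac{g'(u)G(u)}{g(u)^2}$, whose limit as $u\to 0$ is $L$. Since $t=-\log r$ gives $\tfrac{d}{dt}=-r\tfrac{d}{dr}$, logarithmic differentiation of $V$ yields
\begin{equation*}
\frac{V_t}{V}=-1-r\frac{g'(u)u'}{g(u)}-r\frac{u''}{u'}+V,
\end{equation*}
so the whole computation reduces to rewriting the two middle terms as functions of $V$ and $z$.

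For the first middle term, the relation $ru'=VG(u)/g(u)$ gives at once $r\,g'(u)u'/g(u)=V\ell$. For the second, I would use the equation in \eqref{S1E1}, i.e. $\alpha r^{\alpha-1}(u')^{\beta}+\beta r^{\alpha}(u')^{\beta-1}u''=r^{\gamma-1}/g(u)^{\beta}$, and divide by $r^{\alpha-1}(u')^{\beta}$ to get $\beta\,r\,u''/u'=r^{\gamma-\alpha}\bigl(g(u)u'\bigr)^{-\beta}-\alpha$. The crucial algebraic step is the identity
\begin{equation*}
\frac{r^{\gamma-\alpha}}{\bigl(g(u)u'\bigr)^{\beta}}=A^{\beta}V^{-\beta}e^{-\beta z},
\end{equation*}
which follows by inserting $g(u)u'=VG(u)/r$ together with $G(u)=r^{\theta}e^{z}/A$ from \eqref{S4E26} and using $\theta\beta=\gamma-\alpha+\beta$ from \eqref{E6}. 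Feeding these back gives $z_{tt}=-V_t$ explicitly as
\begin{equation*}
z_{tt}=-\Bigl(\tfrac{\alpha}{\beta}-1\Bigr)V-(1-\ell)V^{2}+\tfrac{A^{\beta}}{\beta}\,V^{1-\beta}e^{-\beta z}.
\end{equation*}

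It then remains to substitute $z_t=\theta-V$ — so that $(z_t-\theta)^2=V^2$ and $1-z_t/\theta=V/\theta$, whence the second summand of \eqref{S4L4E32} becomes $b\bigl[1-\theta^{\beta-1}V^{1-\beta}\bigr]e^{-\beta z}$ — into the left-hand side of \eqref{S4L4E29} and to collect terms by powers of $V$ and by $e^{-\beta z}$. Here I expect two clean cancellations to drive the verification: the pure $V^2$ terms vanish through $-(1-\ell)+(1-L)+(L-\ell)=0$, and the pure $e^{-\beta z}$ terms cancel against the constant $b$ coming from $b(1-e^{-\beta z})$. The surviving coefficients then match \emph{by definition}: the constant terms vanish because $b=a\theta-(1-L)\theta^2$, the terms linear in $V$ vanish because $a=\frac{\alpha}{\beta}-1+2\theta(1-L)$, and the $V^{1-\beta}e^{-\beta z}$ terms vanish because $A^{\beta}/\beta=b\theta^{\beta-1}$, which a short reduction using $\theta\beta=\gamma-\alpha+\beta$ identifies with the formula $A^{\beta}=\theta^{\beta}\{\gamma-L(\gamma-\alpha+\beta)\}$ in \eqref{E7}; the first two are precisely \eqref{S4L4E30}. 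The leftover is exactly the two remainder terms assembled in $T(z)$ in \eqref{S4L4E32}: the gap $(L-\ell)V^2$ between $\ell$ and its limit $L$, and the gap $b\bigl[1-\theta^{\beta-1}V^{1-\beta}\bigr]e^{-\beta z}$ between the genuine nonlinearity and its autonomous model. The main obstacle is conceptual rather than computational — recognizing that $T(z)$ must be designed to absorb exactly these two discrepancies so that the remaining equation is the autonomous model \eqref{S4L4E29}; once the identity for $r^{\gamma-\alpha}(g(u)u')^{-\beta}$ is in hand, the rest is careful but routine algebra.
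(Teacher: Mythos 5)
Your proposal is correct and takes essentially the same route as the paper: both differentiate the change of variables \eqref{S4E26} twice, eliminate $u''$ through the ODE (your identity $\beta r u''/u' = r^{\gamma-\alpha}\bigl(g(u)u'\bigr)^{-\beta}-\alpha$ is exactly the paper's \eqref{S4L4E36} after dividing by $r^{\alpha-1}(u')^{\beta}$), and close the computation with the defining identities of $a$, $b$ and $A^{\beta}=\beta\, b\,\theta^{\beta-1}$. Your bookkeeping via logarithmic differentiation of $V=\theta-z_t$ merely avoids carrying the exponential factors $e^{z+(k-\theta)t}$ that the paper tracks explicitly; all the cancellations you identify (the $V^2$ terms via $-(1-\ell)+(1-L)+(L-\ell)=0$, the constant, linear and $V^{1-\beta}e^{-\beta z}$ terms via the definitions of $a$, $b$, $A$) are precisely the ones that make the paper's derivation work.
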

\begin{proof}
Since $r=e^{-t}$ we may write (\ref{S4E26}) as $G(u(r))=\frac{1}{A}e^{z-\theta t}$.
Since $\frac{d}{dr}=-e^t\frac{d}{dt}$, we differentiate in the above equality to deduce $g(u)u'=-\frac{e^t}{A}\frac{d}{dt}\left( e^{z-\theta t}\right)$, that is,
\begin{equation}\label{S4L4E32+}
g(u)u'=-\frac{e^{z+(1-\theta)t}}{A}(z_t-\theta).
\end{equation}
We differentiate in the last equality and obtain
\begin{equation}\label{S4L4E33}
g'(u)u'^2+g(u)u''
=\frac{e^{z+(2-\theta)t}}{A}\left\{z_{tt}+(z_t-\theta)(z_t-\theta+1)\right\}.
\end{equation}
From (\ref{S4L4E32+}) we find
\begin{equation}\label{S4L4E34}
g'(u)u'^2=\frac{g'(u)}{A^2g(u)^2}e^{2z+2(1-\theta)t}(z_t-\theta)^2
=\frac{(z_t-\theta)^2}{A}\frac{g'(u)G(u)}{g(u)^2}e^{z+(2-\theta)t}.
\end{equation}
Use (\ref{S4L4E34}) in (\ref{S4L4E33}) to deduce
\begin{equation}\label{S4L4E35}
g(u)u''=\frac{e^{z+(2-\theta)t}}{A}\left\{z_{tt}+(z_t-\theta)(z_t-\theta+1)
-(z_t-\theta)^2\frac{g'(u)G(u)}{g(u)^2}\right\}.
\end{equation}
Since $f(u)\left( r^{\alpha}u'(r)^{\beta}\right)'=r^{\gamma-1}$, we find
\begin{equation}\label{S4L4E36}
(u'g)^{\beta-1}\left(\beta gu''+\frac{\alpha}{r} u'g\right)-r^{\gamma-\alpha-1}=0.
\end{equation}
Using now the expressions of $g(u)u'$ and $g(u)u''$ given by (\ref{S4L4E32+}) and respectively (\ref{S4L4E35}), from (\ref{S4L4E36}) we derive (\ref{S4L4E29}).
\end{proof}
Crucial to our approach is the following result which establishes the behaviour at infinity for $z$ and $z_t$.
\begin{proposition}\label{S4P5}
Let $z$ be defined by (\ref{S4E26}) which also satisfies (\ref{S4L4E29}).
Then
$$
\lim_{t\to\infty}z(t)=\lim_{t\to\infty}z_t(t)=0.
$$
\end{proposition}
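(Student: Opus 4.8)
The plan is to reduce everything to the single claim that $z(t)$ has a finite limit as $t\to\infty$. Suppose this is granted, say $z(t)\to z_\infty\in\R$ (so in particular $z$ is bounded). Using the two-sided bound (\ref{S4L3+E27}) on $z_t$ together with the fact that $z_t$ stays bounded away from $\theta$ for large $t$ (otherwise the factor $(1-z_t/\theta)^{1-\beta}$ in (\ref{S4L4E32}) would blow up and, through (\ref{S4L4E29}), drive $z_{tt}\to-\infty$, pushing $z_t$ back down), the right-hand side of (\ref{S4L4E29}) shows that $z_{tt}$ is bounded; hence $z_t$ is uniformly continuous and Barbalat's lemma, applied to the convergent $z$, yields $z_t\to0$. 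Passing to the limit in (\ref{S4L4E29}) then kills every term except $b(1-e^{-\beta z_\infty})$: indeed $z_t\to0$ kills $-az_t$ and $(1-L)z_t^2$, while in $T(z)$ of (\ref{S4L4E32}) the first summand tends to $0$ because $g'(u)G(u)/g(u)^2\to L$ (the definition of $L$, see (\ref{S1E3})) and $(z_t-\theta)^2\to\theta^2$, and the second summand tends to $0$ because $(1-z_t/\theta)^{1-\beta}\to1$. Consequently $z_{tt}\to-b(1-e^{-\beta z_\infty})$; since $z_t\to0$ this limit must be $0$, and as $b>0$ we get $z_\infty=0$. This proves the Proposition once the convergence of $z$ is established.

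The substantive content is therefore the convergence of $z$, and the first step is to show $(z,z_t)$ is bounded. From Lemma~\ref{S4L1}~(ii) one has $G(u(r))\ge r^\theta/(\theta\gamma^{1/\beta})$, i.e. $e^{z}$ is bounded below, so $z$ is bounded below; the $\liminf$ estimate (\ref{S4L3E22}) of Lemma~\ref{S4L3} gives $\liminf_{t\to\infty}z<\infty$; and (\ref{S4L3+E27}) bounds $z_t$. To obtain an upper bound for $z$ it is convenient to combine the two $b$-terms in (\ref{S4L4E29})--(\ref{S4L4E32}) and rewrite the equation as
\begin{equation}\label{reform}
z_{tt}-az_t+(1-L)z_t^2+\Big[L-\frac{g'(u)G(u)}{g(u)^2}\Big](z_t-\theta)^2+b\Big[1-\big(1-\tfrac{z_t}{\theta}\big)^{1-\beta}e^{-\beta z}\Big]=0 .
\end{equation}
At a critical point of $z$ we have $z_t=0$, so (\ref{reform}) reduces to $z_{tt}=-b(1-e^{-\beta z})-[L-g'(u)G(u)/g(u)^2]\theta^2$; since the bracket tends to $0$ as $t\to\infty$, for large $t$ the sign of $z_{tt}$ at a critical point is opposite to that of $z$, so every local maximum of $z$ occurs at a value $z>0$ and every local minimum at a value $z<0$. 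Reading this together with the finite $\liminf$ and the energy identity for $E=\tfrac12 z_t^2+\Psi(z)$, $\Psi'=b(1-e^{-\beta z})$, one excludes escape of $z$ to $+\infty$: were $z$ unbounded, the boundedness of $z_t$ would force $\Psi(z)$, hence $E$, to be large and essentially nondecreasing along the ascent, which is incompatible with $z$ returning to bounded values infinitely often. This yields an upper bound for $z$, so $(z,z_t)$ is bounded.

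The hard part is to upgrade boundedness to convergence, and here the structure of the coefficients is decisive and, at first sight, adverse: because $a>0$ and $b>0$, the linearisation of (\ref{reform}) at $(z,z_t)=(0,0)$ has trace $a+b(\beta-1)/\theta>0$ and determinant $b\beta>0$, so both characteristic roots have positive real part and the equilibrium is repelling for the frozen (autonomous) equation. Thus no Lyapunov functional decreases toward the origin and the energy $E$ above is not monotone. The way around this is that, precisely because the linearisation has no nonzero bounded solution — after the substitution $z=e^{(a/2)t}\eta$ the leading linear part becomes $\eta_{tt}+(b-a^2/4)\eta$, and every nonzero solution of the linearisation makes $z$ grow like $e^{(a/2)t}$ — a bounded solution is pinned by the variation-of-constants formula in which the unstable directions are integrated from $+\infty$; feeding in that the nonlinear remainder is of higher order and that the genuinely nonautonomous coefficient $L-g'(u)G(u)/g(u)^2$ tends to $0$, a Gronwall estimate forces $(z,z_t)\to(0,0)$. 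An equivalent, more hands-on route is a contradiction via $\limsup$ and $\liminf$: the extremum dichotomy of the previous step makes any non-decaying solution oscillate across $z=0$, and the dominant term $a z_t^2$ in $\dot E$ then makes the oscillation amplitude increase from one half-cycle to the next, contradicting the boundedness just proved. The main obstacle in either approach is the same: one must quantify that the vanishing error term $[L-g'(u)G(u)/g(u)^2](z_t-\theta)^2$, whose sign near the origin is governed by (G2)--(G3) and Lemma~\ref{S3L1}, is too weak to sustain the anti-damping, so that the a priori bounds (the finite $\liminf$ and the two-sided control of $z_t$) can be leveraged to rule out both blow-up and persistent oscillation and thereby trap the trajectory at the origin.
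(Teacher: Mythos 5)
Your first two paragraphs are essentially sound and run parallel to the paper: the reduction (if $z$ converges, then $z_t\to 0$ and the limit must be $0$ because $b>0$) corresponds to Lemmas~\ref{S4L6}--\ref{S4L7}, and your boundedness argument via the critical-point dichotomy and an energy functional is Lemma~\ref{S4L8} in sketch form. (Two secondary flaws: your claim that $z_t$ near $\theta$ drives $z_{tt}\to-\infty$ has the sign backwards — the term $b\bigl[1-(1-z_t/\theta)^{1-\beta}\bigr]e^{-\beta z}$ tends to $-\infty$ when $\beta>1$, and it enters \eqref{S4L4E29} so that $z_{tt}\to+\infty$; consequently the uniform continuity of $z_t$ needed for Barbalat is not justified as stated, though the paper's device of passing to the limit along extremum points of $z_t$, where $z_{tt}=0$, avoids this entirely.)

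The genuine gap is your third paragraph, i.e.\ precisely the step you yourself flag as ``the main obstacle'': upgrading boundedness to convergence when $z_t$ oscillates. Neither of your two sketches closes it. The variation-of-constants/Gronwall route pins a bounded solution to a repelling equilibrium only if the trajectory is already known to stay in a small neighborhood of the origin, where the nonlinear remainder is subordinate to the linearization; here you only know $(z,z_t)$ is bounded, and a bounded trajectory could a priori hover at unit distance from the origin, for instance near a recurrent set of the limit system, where no such expansion applies. The energy route fails for a parallel reason: ``amplitude increases from one half-cycle to the next'' does not contradict boundedness, since the amplitudes could increase to a finite limit — exactly what would happen if the trajectory approached a nontrivial periodic orbit of the limit equation. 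Both sketches therefore secretly require the statement that the autonomous limit system \eqref{E46} has \emph{no nontrivial periodic orbits}, and this is what the paper actually proves (Lemma~\ref{S4L9}): it exhibits functionals monotone along the autonomous flow — for $q=1$ the quantity $\frac{1}{2}\tz_t^2+b\bigl(\tz+\frac{1}{\beta}e^{-\beta\tz}\bigr)$, and for $q>1$ a corresponding one after the substitution $e^{\tz}=\zeta^{p+1}/(p+1)$ — which rules out limit cycles; combined with the repelling linearization and the Poincar\'{e}--Bendixson theorem, every nontrivial solution of the limit system must leave any prescribed compact set in finite time. The paper then transfers this to $z$ itself (Lemma~\ref{S4L10}) by continuous dependence on a finite time interval $[0,T]$, using that the nonautonomous error $\bigl[L-g'(u)G(u)/g(u)^2\bigr](z_t-\theta)^2$ tends to $0$: if $(z(t_n),z_t(t_n))\to(z_0,w_0)\neq(0,0)$, then $(z(t_n+\cdot),z_t(t_n+\cdot))$ shadows the escaping limit trajectory and exits the compact set of Lemma~\ref{S4L8}, a contradiction. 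Without this limit-system/Poincar\'{e}--Bendixson/shadowing argument, or a genuine substitute for it, your proof does not go through.
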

We shall distinguish the cases where $z_t$ does not change sign or oscillates in a neighborhood of infinity.
\begin{lemma}\label{S4L6}
Assume $z_t$ does not change sign for $t>0$ large.
Then $\lim_{t\to\infty}z(t)=0$.
\end{lemma}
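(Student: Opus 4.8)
The plan is to use the hypothesis that $z_t$ has constant sign, which makes $z$ eventually monotone, and then to pin down its limit by passing to the limit in the ODE \eqref{S4L4E29}. First I would record that $z$ is bounded. The lower bound on $G(u(r))$ from Lemma~\ref{S4L1}~(ii) gives $e^z=A\,G(u(r))/r^{\theta}\ge A/(\theta\gamma^{1/\beta})>0$ for $r>0$ small, so $z$ is bounded below, while the finiteness of $\liminf_{r\to 0}G(u(r))/r^{\theta}$ in \eqref{S4L3E22} bounds $\liminf_{t\to\infty}e^{z}$ and hence $z$ from above along a sequence. Combined with the assumed monotonicity of $z$ (since $z_t$ does not change sign), this forces $z$ to converge to a finite limit $z_\infty\in\R$: in the nondecreasing case the finite $\liminf$ coincides with the limit, and in the nonincreasing case the pointwise lower bound does the job.

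Next I would show $z_t\to 0$. By \eqref{S4L3+E27} the derivative $z_t$ is bounded, and $z$ is bounded, so each term on the right-hand side of \eqref{S4L4E29}, namely $az_t$, $b(1-e^{-\beta z})$, $(1-L)z_t^2$ and $T(z)$ from \eqref{S4L4E32}, is bounded; hence $z_{tt}$ is bounded and $z_t$ is uniformly continuous. Since $z_t$ has constant sign and $z$ converges, $\int_T^{\infty}|z_t|\,dt=|z_\infty-z(T)|<\infty$, so Barbalat's lemma yields $z_t(t)\to 0$ as $t\to\infty$. I expect this to be the main obstacle: turning mere monotonicity and convergence of $z$ into the quantitative statement $z_t\to 0$ genuinely requires the boundedness of $z_{tt}$ (equivalently, uniform continuity of $z_t$), and Barbalat's lemma is the clean device that supplies it, since a convergent monotone $z$ could a priori have $z_t$ failing to vanish.

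Finally I would pass to the limit in \eqref{S4L4E29}. As $t\to\infty$ we have $r\to 0$, and since $u$ is increasing with $u(0)=0$ (Lemma~\ref{S4L1}~(i)) we get $u(r)\to 0$, so $g'(u)G(u)/g(u)^2\to L$ by \eqref{S1E3}. Because $(z_t-\theta)^2$ is bounded and $z_t\to 0$, both terms of $T(z)$ in \eqref{S4L4E32} vanish in the limit, giving $T(z)\to 0$. Using $z_t\to 0$ and $z\to z_\infty$ in \eqref{S4L4E29} then shows $z_{tt}\to -b(1-e^{-\beta z_\infty})$; were this limit nonzero, $z_t$ would grow without bound, contradicting \eqref{S4L3+E27}, so $b(1-e^{-\beta z_\infty})=0$. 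As $b>0$ by \eqref{S4L4E30}, this forces $e^{-\beta z_\infty}=1$, that is $z_\infty=0$, which is the assertion of the lemma.
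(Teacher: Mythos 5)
Your overall strategy is sound and matches the paper's endgame: both proofs first establish $z_t\to 0$ and then pass to the limit in \eqref{S4L4E29} to force $b(1-e^{-\beta z_\infty})=0$, hence $z_\infty=0$ (otherwise $z_{tt}$ would tend to a nonzero constant and $z_t$ would be unbounded, contradicting \eqref{S4L3+E27}). Where you genuinely differ is the mechanism for $z_t\to 0$: the paper argues by contradiction, assuming $z_\infty\neq 0$, and evaluates the ODE along a sequence of local extremum points of $z_t$ (where $z_{tt}=0$ and $z_t\to 0$ can be arranged); that trick needs no control of $z_{tt}$ at all. You instead invoke Barbalat's lemma, which requires $z_{tt}$ to be bounded, and this is exactly where your argument has a gap.

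You assert that $T(z)$ from \eqref{S4L4E32} is bounded because $z$ and $z_t$ are bounded. This inference fails when $\beta>1$: the factor $\left(1-\frac{z_t}{\theta}\right)^{1-\beta}$ blows up as $z_t\uparrow\theta$, and \eqref{S4L3+E27} only gives $z_t\le\theta$; nothing you have cited keeps $z_t$ away from $\theta$. So $T$ is not a bounded function of the bounded quantities $(z,z_t)$, and the boundedness of $z_{tt}$ is unjustified as written (for $\beta=1$ there is no issue, but the paper's setting includes the $p$-Laplace and $k$-Hessian cases with $\beta>1$). The gap is fixable with material already in the paper: by \eqref{S4L3+E28}, $1-\frac{z_t}{\theta}=\frac{rg(u)u'}{\theta G(u)}$, and the estimate $u'(r)g(u(r))\ge r^{\theta-1}/\gamma^{1/\beta}$ from the proof of Lemma~\ref{S4L1}~(ii), combined with \eqref{S4E26}, yields $1-\frac{z_t}{\theta}\ge\frac{A}{\theta\gamma^{1/\beta}}e^{-z}$. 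Since you have already shown $z$ converges (hence is bounded above), this bounds $1-\frac{z_t}{\theta}$ away from zero, so $T(z)$ and therefore $z_{tt}$ are bounded, and Barbalat's lemma applies. With that one repair your proof is complete, and it is a legitimately different, arguably cleaner, route than the paper's extremum-point argument.
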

\begin{proof}
From Lemma~\ref{S4L3} one has
$$
\frac{1}{\theta\gamma^{1/\beta}}\le\liminf_{r\to 0}\frac{G(u(r))}{r^{\theta}}<\infty.
$$
Using (\ref{S4E26}) this implies $\liminf_{t\to\infty}z(t)\in\R$.
Since $z_t$ does not change sign for $t>0$ large it follows that there  exists $c:=\lim_{t\to\infty}z(t)$ and by the above argument we have $c\in\R$.

We claim that $c=0$.
Suppose to the contrary that this is not true.
Thus, $c\neq 0$.
Then, we prove that
\begin{equation}\label{S4L6E36}
\lim_{t\to\infty}z_t(t)=0.
\end{equation}
Since $t$ is bounded, clearly one has
\begin{equation}\label{S4L6E37}
\liminf_{t\to\infty}|z_t(t)|=0.
\end{equation}
In order to prove (\ref{S4L6E36}) it is enough to show that
\begin{equation}\label{S4L6E38}
\limsup_{t\to\infty}|z_t|=0.
\end{equation}

Suppose by contradiction that (\ref{S4L6E36}) and thus (\ref{S4L6E38}) do not hold.
Using (\ref{S4L6E37}) we may find a sequence of local extremum points $\{t_n\}\subset (0,\infty)$ of $z_t$ so that $t_n\to\infty$ and $z(t_n)\to c$, $z_t(t_n)\to 0$ and $z_{tt}(t_n)=0$.
Now from (\ref{S4L4E29}) it follows that
$$
-az_t(t_n)+b(1-e^{-\beta z(t_n)})+(1-L)z_t(t_n)^2+T(z(t_n))=0.
$$
Passing to the limit in the above equality with $n\to\infty$ (note that $T(z(t_n))\to0 $ by (G2)) one has $b(1-e^{\beta c})=0$ which is impossible since $b>0$ and $c\neq 0$.
This establishes (\ref{S4L6E36}).
Using this fact we may pass to the limit in (\ref{S4L4E29}) this time with $t\to\infty$.
It follows that
$$
\lim_{z\to\infty}z_{tt}(t)=b(e^{-\beta c}-1)\neq 0,
$$
and hence $\lim_{t\to\infty}|z_t(t)|=\infty$ which contradicts the fact that $z_t$ is bounded by (\ref{S4L6E38}).
\end{proof}
\begin{lemma}\label{S4L7}
Assume $z_t$ does not change sign for large $t$.
Then $\lim_{t\to\infty}z_t(t)=0$.
\end{lemma}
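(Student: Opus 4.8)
The plan is to leverage Lemma~\ref{S4L6}, which under the present hypothesis already yields $\lim_{t\to\infty}z(t)=0$, and to combine it with a Barbalat-type argument. Once I know that $z_{tt}$ stays bounded for large $t$ --- so that $z_t$ is uniformly continuous --- the convergence $z_t\to 0$ follows easily: since $z_t$ has a fixed sign for large $t$, the function $z$ is eventually monotone and, by Lemma~\ref{S4L6}, tends to $0$, so $\int^{\infty}|z_t|\,dt<\infty$. If $z_t\not\to 0$ there would be $\delta>0$ and $t_n\to\infty$ with $|z_t(t_n)|\ge\delta$, and uniform continuity would force $|z_t|\ge\delta/2$ on intervals of a fixed length about each $t_n$, making $z$ oscillate by a fixed amount and contradicting its convergence. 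Thus the whole proof reduces to bounding $z_{tt}$ near infinity.

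Rewriting (\ref{S4L4E29}) as $z_{tt}=az_t-b(1-e^{-\beta z})-(1-L)z_t^2-T(z)$, it suffices to bound each term. The bound (\ref{S4L3+E27}) shows $z_t$ is bounded, and $z\to 0$ keeps $e^{-\beta z}$ bounded; the only delicate term is $T(z)$ from (\ref{S4L4E32}), and in particular the factor $(1-z_t/\theta)^{1-\beta}$, which for $\beta>1$ could a priori blow up should $z_t$ approach its extreme value $\theta$. The key preliminary step is therefore to show that $z_t$ stays bounded away from $\theta$. Using (\ref{S4L3+E28}), namely $\theta-z_t=rg(u)u'/G(u)$, together with the inequality $g(u)u'\ge r^{\theta-1}/\gamma^{1/\beta}$ established in the proof of Lemma~\ref{S4L1}~(ii) and the identity $G(u)=A^{-1}r^{\theta}e^{z}$ coming from (\ref{S4E26}), I obtain
\[
\theta-z_t=\frac{rg(u)u'}{G(u)}\ge\frac{A}{\gamma^{1/\beta}e^{z}}.
\]
Since $z\to 0$, the right-hand side is bounded below by a positive constant for large $t$, so $z_t\le\theta-c$ for some $c>0$; combined with the lower bound in (\ref{S4L3+E27}) this confines $1-z_t/\theta$ to a compact subinterval of $(0,\infty)$, whence $(1-z_t/\theta)^{1-\beta}$ is bounded.

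With this in hand $T(z)$ is bounded: its first term $[L-g'(u)G(u)/g(u)^2](z_t-\theta)^2$ is bounded because $(z_t-\theta)^2$ is bounded and $g'(u)G(u)/g(u)^2\to L$ by (\ref{S1E3}) (indeed this term tends to $0$), while the second term is bounded by the previous step and the boundedness of $e^{-\beta z}$. Consequently $z_{tt}$ is bounded for large $t$, so $z_t$ is uniformly continuous, and the Barbalat argument of the first paragraph delivers $z_t\to 0$.

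The main obstacle is precisely the control of $(1-z_t/\theta)^{1-\beta}$ when $\beta>1$: everything hinges on the a priori lower bound $\theta-z_t\ge A/(\gamma^{1/\beta}e^{z})$, which keeps $z_t$ away from $\theta$. Once $z_{tt}$ is known to be bounded, the rest is the standard observation that a uniformly continuous derivative of a convergent function must vanish at infinity.
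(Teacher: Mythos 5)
Your proof is correct, but it follows a genuinely different route from the paper's. The paper argues by contradiction directly on the ODE (\ref{S4L4E29}): since $z$ is bounded, $\liminf|z_t|=0$, so if $z_t\not\to 0$ one can pick local extremum points $t_n$ of $z_t$ (where $z_{tt}(t_n)=0$) with $z_t(t_n)\to\ell\neq 0$, and passing to the limit yields the algebraic equation (\ref{S4L7E42}), $\varphi(\ell)=0$ with $\varphi(s)=-as+(1-L)s^2+b\{1-(1-s/\theta)^{1-\beta}\}$; this is then excluded by sign considerations when $\ell<0$ and by showing $\varphi'\le 1-\alpha/\beta<0$ on $(0,\theta]$ when $0<\ell\le\theta$. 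You instead bound $z_{tt}$ and invoke a Barbalat-type argument, and the genuinely new ingredient you supply is the quantitative bound $\theta-z_t=rg(u)u'/G(u)\ge A\gamma^{-1/\beta}e^{-z}$, obtained by combining (\ref{S4L3+E28}), the inequality $u'g(u)\ge r^{\theta-1}/\gamma^{1/\beta}$ from the proof of Lemma~\ref{S4L1}~(ii), and the definition (\ref{S4E26}); this refines the paper's one-sided bound $z_t\le\theta$ in (\ref{S4L3+E27}) to $z_t\le\theta-c$ and tames the factor $(1-z_t/\theta)^{1-\beta}$ when $\beta>1$. Each approach has its merits: the paper's is shorter and exploits the monotonicity structure of the limiting nonlinearity $\varphi$, but its limit passage in Case 2 is slightly delicate precisely when $\ell=\theta$ and $\beta>1$ (there $T$ diverges rather than converges, though the contradiction survives); your argument is softer and more robust, needing only boundedness of the right-hand side of (\ref{S4L4E29}) rather than any structural property of it, at the cost of establishing the extra a priori bound keeping $z_t$ away from $\theta$. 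One cosmetic remark: since $z_t$ has fixed sign, $z$ cannot ``oscillate''; the correct phrasing of your final contradiction is that the monotone function $z$ would undergo infinitely many disjoint changes of size at least $\delta h/2$ in the same direction, contradicting its convergence.
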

\begin{proof}
From the previous lemma we know that $\lim_{t\to\infty}z(t)=0$ and since $z$ is bounded, one has
\begin{equation}\label{S4L7E39}
\liminf_{t\to\infty}|z_t(t)|=0.
\end{equation}
Assume by contradiction that
\begin{equation}\label{S4L7E40}
\limsup_{t\to\infty}|z_t(t)|>0.
\end{equation}
Then, from (\ref{S4L7E39}), (\ref{S4L7E40}) we can find a sequence of local extremum points $\{t_n\}$ of $z_t$ such that, as $n\to\infty$, one has
\begin{equation}\label{S4L7E41}
z(t_n)\to 0,\quad z_{tt}(t_n)=0,\ \ \textrm{and}\ \ z_t(t_n)\to\ell\neq 0.
\end{equation}
\underline{Case 1}: $z_t\le 0$ for large $t$.\\
Then $\ell<0$.
Passing to the limit in (\ref{S4L4E29}) and using (\ref{S4L7E41}) we find:
\begin{equation}\label{S4L7E42}
-a\ell+(1-L)\ell^2+b\left[1-\left(1-\frac{\ell}{\theta}\right)^{1-\beta}\right]=0,
\end{equation}
however this is a contradiction since $a,b>0$, $\ell<0$ and $\beta\ge 1$.\\
\underline{Case 2}: $z_t\ge 0$ for large $t$.\\
Then $\ell>0$.
Passing to the limit in (\ref{S4L4E29}) we again derive (\ref{S4L7E42}).
Observe that by (\ref{S4L3+E27}) one has $z_t\le\theta$.
Therefore, $\ell\le\theta$.
Also, $\varphi(s)=-as+(1-L)s^2+b\left\{1-(1-s/\theta)^{1-\beta}\right\}$ is decreasing on $(0,\theta]$ since
\begin{align*}
\varphi'(s)=&-a+2(1-L)s+\frac{b(1-\beta)}{\theta}\left(1-\frac{s}{\theta}\right)^{-\beta}\\
\le& -a+2(1-L)s\le -a+2(1-L)\theta=1-\frac{\alpha}{\beta}<0.
\end{align*}
Hnece $\varphi(\ell)<\varphi(0)=0$ which shows that (\ref{S4L7E42}) cannot hold.
This proves that (\ref{S4L7E40}) cannot hold either and completes our argument.
\end{proof}
We next discuss the case where $z_t$ oscillates in a neighborhood of infinity.
\begin{lemma}\label{S4L8}
Assume that $z_t$ changes sign infinitely many times at $t\to\infty$.
Then, there exists a compact set $K$ in the $zz_t$-plane and $t\in\R$ such that
$$
\{(z,z_t);\ t>t_0\}\subset K.
$$
\end{lemma}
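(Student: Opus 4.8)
The plan is to show the orbit $t\mapsto(z(t),z_t(t))$ is trapped in a bounded region of the phase plane. Two of the three required bounds are already in hand. By \eqref{S4L3+E27} the velocity $z_t$ is confined to the fixed interval $\bigl[\theta-\theta(\gamma/(\alpha-\beta))^{1/\beta},\theta\bigr]$ for large $t$; and by the lower estimate $G(u(r))\ge r^{\theta}/(\theta\gamma^{1/\beta})$ of Lemma~\ref{S4L1}(ii), together with the change of variables \eqref{S4E26}, we obtain the genuine lower bound $z(t)\ge\log\bigl(A/(\theta\gamma^{1/\beta})\bigr)$. Thus the orbit lies in a lower-bounded horizontal strip, and the whole content of the lemma reduces to an \emph{upper} bound $z(t)\le C$ for large $t$. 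This is where the oscillation hypothesis must enter.

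The mechanism I would exploit is a downward drift of $z_t$ in the region where $z$ is large. Regrouping the restoring term of \eqref{S4L4E29} with the second term of $T$ in \eqref{S4L4E32} yields the identity
$$
b(1-e^{-\beta z})+b\Bigl[1-\bigl(1-\tfrac{z_t}{\theta}\bigr)^{1-\beta}\Bigr]e^{-\beta z}=b-b\bigl(1-\tfrac{z_t}{\theta}\bigr)^{1-\beta}e^{-\beta z},
$$
so that \eqref{S4L4E29} rewrites as
$$
z_{tt}=-(1-L)(z_t-\theta)(z_t-w_+)+b\bigl(1-\tfrac{z_t}{\theta}\bigr)^{1-\beta}e^{-\beta z}-\Bigl[L-\tfrac{g'(u)G(u)}{g(u)^2}\Bigr](z_t-\theta)^2,
$$
where $w_+:=b/\{(1-L)\theta\}=\theta(P+Q)/P>\theta$, with $P=\theta(1-L)\ge0$ and $Q=\alpha/\beta-1>0$, is the second root of $(1-L)s^2-as+b$. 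Since $z_t<\theta$ strictly (because $u'>0$ by Lemma~\ref{S4L1}(i) and \eqref{S4L3+E28}) and $z_t$ exceeds its lower bound, the product $(z_t-\theta)(z_t-w_+)$ is nonnegative, so the leading term is $\le0$; moreover $g'(u)G(u)/g(u)^2\to L$ as $t\to\infty$ by (G2), so the last term is $o(1)$. Hence on $\{z\ge R\}$ the factor $e^{-\beta z}$ is as small as we like, the equation is dominated by the negative drift $-(1-L)(z_t-\theta)(z_t-w_+)$, and $z_t$ is essentially nonincreasing there.

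With this I would argue by contradiction. Suppose $\limsup_{t\to\infty}z(t)=+\infty$. Because $z_t$ changes sign infinitely often, $z$ has infinitely many local maxima, and one may select a sequence of them with values tending to $+\infty$. For each such maximum $t_n$, let $\sigma_n<t_n$ be the last time with $z(\sigma_n)=R$, so $z\ge R$ on $[\sigma_n,t_n]$, $z_t(\sigma_n)>0$ and $z_t(t_n)=0$; by the drift $z_t$ decreases monotonically from $z_t(\sigma_n)\le\theta$ to $0$ on this interval. Passing to $w=z_t$ as integration variable, the height gained is
$$
z(t_n)-R=\int_{\sigma_n}^{t_n}z_t\,dt=\int_0^{z_t(\sigma_n)}\frac{w\,dw}{-z_{tt}}.
$$
On $\{z\ge R\}$ the positive correction $b(1-w/\theta)^{1-\beta}e^{-\beta z}$ and the $o(1)$ term are dominated by the drift, so $-z_{tt}\ge\tfrac12(1-L)(\theta-w)(w_+-w)$, whence this integral is at most $\int_0^{z_t(\sigma_n)}\tfrac{2w\,dw}{(1-L)(\theta-w)(w_+-w)}$. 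Provided the entry velocities $z_t(\sigma_n)$ stay bounded away from $\theta$, the latter is bounded uniformly in $n$, forcing $z(t_n)\le R+C$ and contradicting $z(t_n)\to\infty$. This produces the upper bound and, with the first paragraph, places the orbit in a compact set $K$.

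The main obstacle is precisely the behaviour near $z_t=\theta$. There the drift factor $(\theta-z_t)(w_+-z_t)$ degenerates to $0$, while for $\beta>1$ the correction $b(1-z_t/\theta)^{1-\beta}e^{-\beta z}$ blows up, so a priori $-z_{tt}$ need not control the drift, the excursion integral $\int_0^{z_t(\sigma_n)}\tfrac{2w\,dw}{(1-L)(\theta-w)(w_+-w)}$ can diverge, and $z_t$ could linger near $\theta$ while $z$ grows. Closing the argument therefore requires a quantitative estimate showing that during each high excursion $z_t$ stays uniformly below $\theta$, equivalently that the boundary-layer term $(1-z_t/\theta)^{1-\beta}e^{-\beta z}$ remains small. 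I expect this to follow from the strict inequality $z_t<\theta$ of Lemma~\ref{S4L1}(i) weighed against the fast decay of $e^{-\beta z}$ on $\{z\ge R\}$: since $z$ is large there, $e^{-\beta z}$ overwhelms the algebraic singularity $(1-z_t/\theta)^{1-\beta}$, keeping the correction negligible and the drift genuinely dissipative. For $\beta=1$ the correction term is bounded and this step is immediate, so only the quasilinear case $\beta>1$ needs the delicate boundary-layer control.
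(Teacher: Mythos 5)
Your reduction of the lemma to an upper bound for $z$, the bounds on $z_t$ from \eqref{S4L3+E27}, the lower bound $z\ge\log\left(A/(\theta\gamma^{1/\beta})\right)$, and the regrouping of \eqref{S4L4E29} as $z_{tt}=-(1-L)(z_t-\theta)(z_t-w_+)+b\left(1-z_t/\theta\right)^{1-\beta}e^{-\beta z}-\left[L-g'(u)G(u)/g(u)^2\right](z_t-\theta)^2$ are all correct (modulo the degenerate case $q=1$, where $L=1$ and $w_+$ is undefined). But the gap you flag at the end is not a technicality that "I expect to follow": it is fatal to the argument as designed, and your heuristic for closing it is false. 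The only a priori relation between $z_t$ and $z$ available at this stage is the one coming from Lemma~\ref{S4L1}(i) (exactly as in the proof of \eqref{S4L3+E27}): $r u'g(u)/G(u)\ge \gamma^{-1/\beta}r^{\theta}/G(u)=c\,e^{-z}$ with $c=A\gamma^{-1/\beta}$, i.e.\ $\theta-z_t\ge c\,e^{-z}$. This bounds $\theta-z_t$ from \emph{below}, which is the wrong direction for you: it does not prevent the orbit from sitting near the edge of the layer. Worse, at a phase point with $\theta-z_t=c\,e^{-z}$ (compatible with every a priori bound), both competing terms are of order $e^{-z}$, and a short computation using $(1-L)(w_+-\theta)=\alpha/\beta-1$, $b=\theta\{\gamma-L(\gamma-\alpha+\beta)\}/\beta$ and $c^{\beta}=A^{\beta}/\gamma$ gives
\begin{equation*}
\frac{b\left(1-z_t/\theta\right)^{1-\beta}e^{-\beta z}}{(1-L)(\theta-z_t)(w_+-z_t)}\;\longrightarrow\;\frac{\gamma}{\alpha-\beta}>1 ,
\end{equation*}
so the "correction" actually exceeds the drift there, no matter how large $R$ is; the pointwise inequality $-z_{tt}\ge\tfrac{1}{2}(1-L)(\theta-w)(w_+-w)$ on $\{z\ge R\}$ cannot be derived, and with it fall the monotonicity of $z_t$ on the excursion and the change of variables. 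Your claim that $\beta=1$ is immediate is also wrong: even with a bounded correction, your excursion integral diverges logarithmically as $z_t(\sigma_n)\to\theta$, so you would still need the entry velocities uniformly bounded away from $\theta$, which is precisely what is not proved.

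It is worth seeing how the paper avoids this trap: it never estimates the ascending part of an excursion (where $z_t\ge 0$ and the layer $z_t\approx\theta$ is dangerous), but instead works on the \emph{descending} interval $[t_n,\tilde t_n]$ after each high local maximum, where $z_t\le 0$. There every problematic term is harmless by sign alone: $1-z_t/\theta\ge 1$ forces $(1-z_t/\theta)^{1-\beta}\le 1$, and the quantity $\Psi=\tfrac{1}{2}z_t^2+\theta(\tfrac{\alpha}{\beta}-1)z+\tfrac{b}{\beta}e^{-\beta z}$ is nondecreasing along the flow on such intervals. Comparing $\Psi(t_n)$ with $\Psi(\tilde t_n)$ at the next critical time (which exists by the oscillation hypothesis), and using $z(\tilde t_n)<z(t_n)\to\infty$ together with the monotonicity of $s\mapsto\theta(\tfrac{\alpha}{\beta}-1)s+\tfrac{b}{\beta}e^{-\beta s}$ near $+\infty$, forces $z(\tilde t_n)\to-\infty$, contradicting the lower bound on $z$. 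If you want to keep your dissipation picture, the repair is to run it on the descending segments rather than the ascending ones — which is, in essence, the paper's proof.
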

\begin{proof}
Note that by (\ref{S4L3+E27}), $z_t$ is bounded.
Also, by Lemma~\ref{S4L3} and (\ref{S4E26}) we have
$$
-\infty<\liminf_{t\to\infty}z(t)<\infty.
$$
To prove our result, it is enough to show that
\begin{equation}\label{S4L8E43}
\limsup_{t\to\infty}z(t)<\infty.
\end{equation}
Assume to the contrary that (\ref{S4L8E43}) does not hold.
Thus, one can find two sequences $\{t_n\}$, $\{\ttn\}$ such that 
\begin{itemize}
\item $t_n$ is a local maximum point for $z$, $z_t(t_n)=0$ and $z(t_n)\to\infty$ as $n\to\infty$.
\item $\ttn>t_n$, $z_t(\ttn)=0$ and $z_t<0$ on $(t_n,\ttn)$.
\end{itemize}
In particular one has that $z(\ttn)<z(t_n)$.
For large $t>0$ set
$$
\Psi(t)=\frac{1}{2}z_t^2+\theta\left(\frac{\alpha}{\beta}-1\right)z+\frac{b}{\beta}e^{-\beta z}.
$$
Then, using (\ref{S4L4E29}) we compute
\begin{align*}
\frac{d\Psi}{dt}
&=z_tz_{tt}+\theta\left(\frac{\alpha}{\beta}-1\right)z_t-be^{-\beta z(t)}z_t\\
&=\left[ a+2\theta\left(L-\frac{g'(u(r))G(u(r))}{g(u(r))^2}\right)\right]z_t^2
+\left(1-\frac{g'G}{g^2}\right)(-z_t)^3\\
&\quad +\theta^2\left(1-\frac{g'G}{g^2}\right)(-z_t)
+b\left[1-\left(1-\frac{z_t}{\theta}\right)^{1-\beta}\right]e^{-\beta z}(-z_t).
\end{align*}
Therefore $\frac{d\Psi}{dt}\ge 0$ on $[t_n,\ttn]$ and $\Psi(t_n)\ge \Psi(\ttn)$.
This yields
\begin{equation}\label{S4L8E44}
\theta\left(\frac{\alpha}{\beta}-1\right)z(\ttn)+\frac{b}{\beta}e^{-\beta z(\ttn)}
\ge \left(\frac{\alpha}{\beta}-1\right)z(t_n)+\frac{b}{\beta}e^{-\beta z(t_n)}
\to\infty\ \ \textrm{as}\ \ t\to\infty.
\end{equation}
Since the function $s\mapsto \theta\left(\frac{\alpha}{\beta}-1\right)s+\frac{b}{\beta}e^{-\beta z(t_n)}$ is increasing in a neighborhood of infinity and $z(\ttn)<z(t_n)$.
It follows from (\ref{S4L8E44}) that $z(\ttn)\to -\infty$ which contradicts $\liminf z_t\in\R$.
\end{proof}

Let us consider the limit problem that corresponds to (\ref{S4L4E29}) that is
\begin{equation}\label{E46}
\begin{cases}
\tz_{tt}-a\tz_t+b(1-e^{-\beta\tz})+(1-L)\tz_t^2+b\left[1-\left(1-\frac{\tz_t}{\theta}\right)^{1-\beta}\right] e^{-\beta\tz}=0,\\
\tz(0)=z_0\in\R
\end{cases}
\end{equation}
which we can write as:
\begin{equation}\label{E47}
\begin{cases}
\tz_t=\tw,\\
\tw_t=a\tw-b(1-e^{-\beta\tz})-(1-L)\tw^2-b\left[1-\left(1-\frac{\tw}{\theta}\right)^{1-\beta}\right] e^{-\beta\tz},\\
(\tz(0),\tw(0))=(\tz_0,\tw_0)\in\R^2.
\end{cases}
\end{equation}

\begin{lemma}\label{S4L9}
Let $(\tz,\tw)$ be the solution of (\ref{E47}) and let $K$ be a compact set in the $\tz\tw$-plane.
If $(\tz_0,\tw_0)\neq (0,0)$ then there exists $T>0$ such that $(\tz(T),\tw(T))\not\in K$.
\end{lemma}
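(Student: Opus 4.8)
The plan is to argue by contradiction: I assume that the nontrivial orbit of (\ref{E47}) stays in the compact set $K$ for all $t>t_0$, and show that this forces $(\tz_0,\tw_0)=(0,0)$. First I record that the origin is the only equilibrium of (\ref{E47}): setting $\tw=0$ in the second equation leaves $-b(1-e^{-\beta\tz})$, which vanishes only at $\tz=0$. The linearization there has trace $a+b(\beta-1)/\theta>0$ and determinant $b\beta>0$, so the origin is a source; this already yields the local version of the claim, and the work is to globalize it.

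The tool is the energy
$$
E(\tz,\tw)=\tfrac12\tw^2+V(\tz),\qquad V(\tz)=b\int_0^{\tz}(1-e^{-\beta s})\,ds=b\tz-\tfrac{b}{\beta}(1-e^{-\beta\tz}).
$$
Since $V'(\tz)=b(1-e^{-\beta\tz})$ has the sign of $\tz$ and $V(0)=0$, the potential $V$ is strictly convex with a unique global minimum $0$ at the origin; hence $E\ge 0$ and $E(\tz,\tw)=0$ if and only if $(\tz,\tw)=(0,0)$. Differentiating along (\ref{E47}) and simplifying gives
$$
\dot E=a\tw^2-(1-L)\tw^3-b\tw\Big[1-\big(1-\tfrac{\tw}{\theta}\big)^{1-\beta}\Big]e^{-\beta\tz}.
$$
I would then check the sign of each piece on the relevant region $\{\tw<\theta\}$, which for $\beta>1$ is the whole domain of the field and which contains the range of $z_t$ by the a priori bound \eqref{S4L3+E27}. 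For $\beta\ge 1$ one has $\tw\big[1-(1-\tw/\theta)^{1-\beta}\big]\le 0$ when $\tw<\theta$, so the last term is $\ge 0$; and $a\tw^2-(1-L)\tw^3=\tw^2\big(a-(1-L)\tw\big)\ge 0$, because $a-(1-L)\theta=\theta(1-L)+\alpha/\beta-1>0$ forces $a-(1-L)\tw>0$ on $\tw\le\theta$. Thus $\dot E\ge 0$, with equality exactly on the line $\{\tw=0\}$.

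With $E$ nondecreasing along the bounded orbit, $E(t)$ increases to a finite limit, and LaSalle's invariance principle applies: the $\omega$-limit set lies in the largest invariant subset of $\{\dot E=0\}=\{\tw=0\}$. On $\{\tw=0\}$ one has $\tz_t=0$ and $\tw_t=-b(1-e^{-\beta\tz})$, so invariance forces $\tz=0$, and the only invariant point is the origin. Hence $(\tz(t),\tw(t))\to(0,0)$ and $E(t)\to0$; but $E$ is nondecreasing and $E(\tz_0,\tw_0)>0$ for a nontrivial orbit, a contradiction. Therefore the orbit must leave $K$, which is the assertion.

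The main obstacle is the behaviour near the singular line $\tw=\theta$, where $(1-\tw/\theta)^{1-\beta}$ blows up for $\beta>1$ and the field is undefined. Here I would exploit that the negation of the lemma forces the orbit to remain in $K$ for \emph{all} $t>t_0$, so in particular the solution is global in forward time. Since $\tw_t\to+\infty$ as $\tw\to\theta^-$, any solution entering a left neighbourhood of $\theta$ reaches $\theta$ in finite time; globality therefore precludes $\limsup_t\tw(t)=\theta$, so $\tw$ stays bounded away from $\theta$ and the orbit is confined to a compact subset of $\{\tw<\theta\}$ on which the field is $C^1$ — exactly the setting in which the LaSalle step is rigorous. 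For $\beta=1$ the bracketed term vanishes identically, the field is smooth on all of $\R^2$, and $\dot E\ge 0$ holds directly on $\{\tw\le\theta\}$, so the same conclusion follows. A cleaner alternative to LaSalle is Poincar\'e--Bendixson together with the strict monotonicity of $E$, which at once rules out periodic orbits, homoclinic loops, and convergence to the repelling origin.
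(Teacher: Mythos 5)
Your proof follows the same skeleton as the paper's: the origin is the unique equilibrium and is a repeller (your linearized matrix, with trace $a+b(\beta-1)/\theta$ and determinant $b\beta$, is in fact the correct computation; the paper's entry $a+(\beta-1)b$ drops a factor $1/\theta$, harmlessly), all other asymptotic behaviour is excluded by a monotone energy, and one concludes by LaSalle/Poincar\'e--Bendixson. Where you differ is attractive: the paper splits into $q=1$ and $q>1$ and, for $q>1$, introduces the substitution $e^{\tz}=\zeta^{p+1}/(p+1)$, $p=q/(q-1)$, precisely to kill the quadratic term $(1-L)\tz_t^2$ before building an energy; you instead keep the single energy $E=\frac12\tw^2+V(\tz)$ for all $q$ and absorb the cubic term $-(1-L)\tw^3$ using the inequality $a-(1-L)\tw\ge a-(1-L)\theta>0$ valid on $\{\tw\le\theta\}$. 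For $\beta>1$ this is a genuine streamlining, since $\{\tw<\theta\}$ is then the whole domain of the vector field, and your explicit treatment of the singular line $\tw=\theta$ is more careful than the paper, which applies Poincar\'e--Bendixson without comment (this definedness issue afflicts the paper's own proof as well).

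However, there is a genuine gap in the case $\beta=1$ with $q>1$ (i.e.\ $L<1$) --- exactly the classical MEMS case of the Laplacian with a power nonlinearity, so it cannot be dismissed. For $\beta=1$ the bracketed term vanishes identically, the field is smooth on all of $\R^2$, and \emph{nothing confines an orbit of \eqref{E47} to $\{\tw\le\theta\}$}: the a priori bound \eqref{S4L3+E27} concerns $z_t$ of the original equation, not solutions of the limit system, and the compact set $K$ in the lemma is arbitrary, so it may contain points with $\tw$ large. Wherever $\tw>a/(1-L)$ one has $\dot E=\tw^2\bigl(a-(1-L)\tw\bigr)<0$, so $E$ need not be monotone along the orbit; consequently neither your LaSalle step nor the exclusion of periodic orbits goes through, and the sentence ``$\dot E\ge 0$ holds directly on $\{\tw\le\theta\}$, so the same conclusion follows'' does not establish the claim. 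This is precisely the difficulty the paper's $\zeta$-substitution removes: since $(1-L)(p+1)=1$, the quadratic derivative term cancels in the $\zeta$-equation and the resulting energy $\Phi$ satisfies $\dot\Phi=a\zeta_t^2+(\text{a nonnegative term})\ge 0$ along \emph{every} orbit, with no restriction on $\tw$. To repair your argument you must either adopt that transformation when $q>1$, or settle for the weaker statement in which $K\subset\{\tw\le\theta\}$ (which does suffice for Lemma \ref{S4L10}, because the original orbit satisfies $z_t\le\theta$, so the compact set of Lemma \ref{S4L8} may be intersected with this closed half-plane); as written, the lemma for arbitrary compact $K$ is not proved.
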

\begin{proof}
The system (\ref{E47}) has the unique equilibrium point $(0,0)$ and the matrix of the linearized system at $(0,0)$ is
$$
\left(
\begin{array}{cc}
0 & 1\\
-b\beta & a+(\beta-1)b
\end{array}
\right).
$$
It is straightforward to compute the two eigenvalues
$$
\lambda_{\pm}=\frac{1}{2}
\left[ a+(\beta-1)b\pm\sqrt{\{a+(\beta-1)b\}^2-4b\beta}\right].
$$
Since $b,\beta>0$, one has ${\rm Re}(\lambda_{\pm})>0$, and hence $(0,0)$ is either an unstable node or a spiral-out.
Thus, the orbit $\{(\tz(t),\tw(t))\}$ does not converge to $(0,0)$.
If $q=1$ then $L=1$ and $\tz$ satisfies
$$
\frac{d}{dt}\left\{\frac{1}{2}\tz_t^2+b\left(\tz+\frac{1}{\beta}e^{-\beta\tz}\right)\right\}
=a\tz_t^2-b\left[ 1-\left(1-\frac{\tz_t}{\theta}\right)^{1-\beta}\right] e^{-\beta\tz} \tz_t\ge 0.
$$
Here, we see that by taking the cases $\tz_t\ge 0$ and $z_t<0$ that
$$
-b\left[1-\left(1-\frac{\tz_t}{\theta}\right)^{1-\beta}\right] e^{-\beta\tz}\tz_t\ge 0.
$$
The last estimate indicates that system (\ref{E47}) has no nontrivial periodic orbit.
If $q>1$, let $\zeta$ be defined by $e^{\tz}=\frac{\zeta^{p+1}}{p+1}$, where $p=q/(q-1)>1$.
Then, from (\ref{E46}) we deduce that $\zeta$ satisfies
$$
\zeta_{tt}-a\zeta_t+\frac{b}{p+1}\left\{\zeta-(p+1)^{\beta}\zeta^{1-\beta(p+1)}\right\}
+b(p+1)^{\beta-1}\zeta^{1-\beta(p+1)}
\left[1-\left(1-\frac{(p+1)\zeta_t}{\theta\zeta}\right)^{1-\beta}\right]=0.
$$
Note that $\beta\ge 1$ and $p>1$ implies $\beta(p+1)>2$.
Consider next
$$
\Phi(t)=\frac{1}{2}\zeta_t^2+\frac{b}{p+1}\left\{\frac{\zeta^2}{2}+\frac{(p+1)^{\beta}}{\beta(p+1)-2}\zeta^{2-\beta(p+1)}\right\}.
$$
Then
$$
\frac{d}{dt}\Phi
=a\zeta_t^2-b(p+1)^{\beta-1}\zeta^{1-\beta(p+1)}\left[1-\left(1-\frac{(p+1)\zeta_t}{\theta\zeta}\right)^{1-\beta}\right]\zeta_t\ge 0
$$
which shows that (\ref{E47}) has no nontrivial periodic orbit.
Thus, (\ref{E47}) has no limit cycle in both cases $q=1$ and $q>1$.

If $\{(\tz(t),\tw(t))\}\in K$ for all $t\ge 0$, then, from the Poincar\'{e}-Bendixon theorem it follows that $(\tz(t),\tw(t))$ either converges to an equilibrium point or approaches a limit cycle which is not possible in light of the above arguments.
Thus, there exists $T>0$ such that $(\tz(T),\tw(T))\not\in K$.
\end{proof}

\begin{lemma}\label{S4L10}
Let $z$ be defined by (\ref{S4E26}).
If $z_t$ changes sign infinitely many times as $t\to\infty$, then
$$
\lim_{t\to\infty}z(t)=\lim_{t\to\infty}z_t(t)=0.
$$
\end{lemma}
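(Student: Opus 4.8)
The plan is to read the non-autonomous second order equation (\ref{S4L4E29}) as an \emph{asymptotically autonomous} perturbation of the limit system (\ref{E47}), and then to identify the $\omega$-limit set of the bounded trajectory using Lemmas~\ref{S4L8} and~\ref{S4L9}.

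First I would rewrite (\ref{S4L4E29}) as the first order system $z_t=w$, $w_t=aw-b(1-e^{-\beta z})-(1-L)w^2-b[1-(1-w/\theta)^{1-\beta}]e^{-\beta z}-E(t)$, where, comparing with the expression (\ref{S4L4E32}) for $T(z)$, the only non-autonomous term is
$$
E(t)=\left[L-\frac{g'(u)G(u)}{g(u)^2}\right](w-\theta)^2.
$$
Since $r=e^{-t}\to 0$ as $t\to\infty$ forces $u(r)\to u(0)=0$, the defining limit (\ref{S1E3}) gives $g'(u)G(u)/g(u)^2\to L$; together with the boundedness of $w=z_t$ coming from (\ref{S4L3+E27}) this yields $E(t)\to 0$. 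Thus the governing field differs from that of (\ref{E47}) by a term that vanishes as $t\to\infty$.

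Next I would invoke Lemma~\ref{S4L8}: in the oscillatory regime the trajectory $\{(z(t),z_t(t)):t>t_0\}$ is contained in a compact set $K$, so its $\omega$-limit set $\Omega$ is nonempty, compact, and satisfies $\Omega\subset K$. The core of the argument is to show that $\Omega$ is forward invariant under the limit flow (\ref{E47}). Given $(\tz_0,\tw_0)\in\Omega$, choose $t_n\to\infty$ with $(z(t_n),z_t(t_n))\to(\tz_0,\tw_0)$, fix $s>0$, and compare the shifted solutions $\tau\mapsto(z(t_n+\tau),z_t(t_n+\tau))$ on $[0,s]$ with the solution of (\ref{E47}) issuing from $(\tz_0,\tw_0)$. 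The shifted fields differ from the limit field only through $E(t_n+\tau)$, which tends to $0$ uniformly on $[0,s]$, and all the orbits involved remain in $K$, where the field is smooth; continuous dependence on data and on the field then gives $(z(t_n+s),z_t(t_n+s))\to(\tz(s),\tw(s))$. Since $t_n+s\to\infty$ the left-hand limit belongs to $\Omega$, so $(\tz(s),\tw(s))\in\Omega$, which is the desired forward invariance.

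Finally I would argue by contradiction: if $(z,z_t)$ failed to converge to $(0,0)$, then $\Omega$ would contain a point $(\tz_0,\tw_0)\neq(0,0)$, and by forward invariance the whole forward orbit of (\ref{E47}) through this point would stay in $\Omega\subset K$, contradicting Lemma~\ref{S4L9}. Hence $\Omega=\{(0,0)\}$, and boundedness of the trajectory forces $\lim_{t\to\infty}z(t)=\lim_{t\to\infty}z_t(t)=0$. I expect the invariance step to be the main obstacle, since the comparison must be carried out in a region where the nonlinearity $(1-w/\theta)^{1-\beta}$ is regular; the key preliminary observation is that $z_t<\theta$ strictly---indeed (\ref{S4L4E32+}) together with the lower bound $u'(r)g(u(r))\ge r^{\theta-1}/\gamma^{1/\beta}$ from Lemma~\ref{S4L1}(ii) gives $\theta-z_t\ge A\gamma^{-1/\beta}e^{-z}$, so on $K$ (where $z$ is bounded above) the orbit stays uniformly away from the singular line $w=\theta$.
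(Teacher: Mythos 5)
Your proposal is correct and follows essentially the same route as the paper: the paper likewise combines the trapping region from Lemma~\ref{S4L8} with the escape property of Lemma~\ref{S4L9} and continuous dependence with respect to the vanishing perturbation $\left[L-\frac{g'(u)G(u)}{g(u)^2}\right](z_t-\theta)^2$, the only difference being that the paper argues directly with a single convergent subsequence $(z(t_n),z_t(t_n))\to(z_0,w_0)\neq(0,0)$ and a finite-time exit of the limit orbit from $K$, rather than through forward invariance of the $\omega$-limit set (your invariance step is exactly the paper's comparison argument in disguise). Your supplementary bound $\theta-z_t\ge A\gamma^{-1/\beta}e^{-z}$, keeping the trajectory uniformly away from the singular line $z_t=\theta$ on $K$, is a legitimate refinement that the paper leaves implicit.
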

\begin{proof}
Suppose by contradiction that
\begin{equation}\label{E48}
\lim_{t\to\infty}(z(t),z_t(t))\neq (0,0).
\end{equation}
By Lemma~\ref{S4L8} there exists a compact set $K\subset\R^2$ and $t_0\ge 0$ so that
\begin{equation}\label{E49}
(z(t),z_t(t))\in K\ \ \textrm{for all}\ t\ge t_0.
\end{equation}
Using (\ref{E48}) and the boundedness of $z$ and $z_t$, we can find a sequence $\{t_n\}\subset\R$ so that $t_n\to\infty$ and
$$
\lim_{n\to\infty}(z(t_n),z_t(t_n))=(z_0,w_0)\neq (0,0).
$$
Let now $(\tz,\tw)$ be the solution of (\ref{E47}) with the initial condition $(z_0,w_0)$.
According to Lemma~\ref{S4L9} there exists $T>0$ such that
\begin{equation}\label{E50}
(\tz(T),\tw(T))\not\in K.
\end{equation}
On the other hand, using (\ref{S1E3}), for large $t>0$ the quantity $\left(L-\frac{g'(u)G(u)}{g(u)^2}\right)(z_t-\theta)^2$ is arbitrarily small since $z_t$ is bounded.
Hence, by the continuous dependence on data for ODEs, for large $n$ we have that $(z(t_n+t),z_t(t_n+t))$ is close to the solution $(\tz(t),\tw(t))$ in the finite interval $[0,T]$.
Using (\ref{E50}) we now deduce that for large $n$, $(z(t_n+t),z_t(t_n+t))\not\in K$ which contradicts (\ref{E49}).
Hence, (\ref{E48}) cannot hold which finishes our proof.
\end{proof}
Now the proof of Proposition~\ref{S4P5} follows from Lemmas~\ref{S4L6}, \ref{S4L7} and \ref{S4L10}.
\begin{corollary}\label{C1}
Let $u$ be a solution of (\ref{S1E1}) and $z$ be defined by (\ref{S4E26}).
Then
$$
\lim_{t\to\infty}z(t)=\lim_{t\to\infty}z_t(t)=0.
$$
In particular, $u(r)=G^{-1}\left[\frac{r^{\theta}}{A}(1+o(1))\right]$ as $r\to 0$.
\end{corollary}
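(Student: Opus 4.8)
The plan is to read off both conclusions directly from the machinery already assembled in this section, so that the corollary reduces to unwinding the change of variables (\ref{S4E26}). First I would check that for an arbitrary solution $u$ of (\ref{S1E1}) the quantity $z$ given by (\ref{S4E26}) is genuinely well defined for large $t$: since $u(r)>0$ on $(0,r_0)$ and $g>0$ on $(0,\bar u)$, we have $G(u(r))=\int_0^{u(r)}g(s)\,ds>0$, hence $\frac{A}{r^{\theta}}G(u(r))>0$ and $z=\log\left(\frac{A}{r^{\theta}}G(u(r))\right)$ is meaningful, with $t=-\log r\to\infty$ as $r\to 0$.

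Next I would invoke Lemma~\ref{S4L4}, which establishes that this $z$ satisfies the second order equation (\ref{S4L4E29}). Consequently $z$ meets precisely the hypotheses of Proposition~\ref{S4P5}, and that proposition yields $\lim_{t\to\infty}z(t)=\lim_{t\to\infty}z_t(t)=0$, which is the first assertion of the corollary. In other words, the first part is obtained by verifying that the $z$ attached to any solution of (\ref{S1E1}) is covered by Proposition~\ref{S4P5}.

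For the concluding asymptotic claim I would simply invert the transformation. From (\ref{S4E26}) we have $G(u(r))=\frac{r^{\theta}}{A}e^{z(t)}$. Since $z(t)\to 0$ as $t\to\infty$, that is, as $r\to 0$, the factor $e^{z(t)}=1+o(1)$, whence $G(u(r))=\frac{r^{\theta}}{A}(1+o(1))$. Because $g>0$ on $(0,\bar u)$, the map $G$ is strictly increasing and therefore admits a continuous inverse near the origin; applying $G^{-1}$ to both sides gives $u(r)=G^{-1}\left[\frac{r^{\theta}}{A}(1+o(1))\right]$ as $r\to 0$, as required.

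The analytic content has already been discharged by Proposition~\ref{S4P5}, and behind it by Lemmas~\ref{S4L6}, \ref{S4L7} and \ref{S4L10}, so I do not expect a genuine obstacle here. The only points demanding a moment's care are confirming that $z$ satisfies the hypotheses of Proposition~\ref{S4P5} for an \emph{arbitrary} solution, which is exactly what Lemma~\ref{S4L4} supplies, and justifying the passage $e^{z(t)}=1+o(1)$ through $G^{-1}$, which is legitimate by the strict monotonicity and hence continuity of $G^{-1}$ on a neighbourhood of the origin.
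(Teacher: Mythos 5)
Your proposal is correct and matches the paper's (essentially implicit) argument: the corollary is exactly the combination of Lemma~\ref{S4L4}, which puts $z$ under the hypotheses of Proposition~\ref{S4P5}, followed by inverting the transformation (\ref{S4E26}). Note only that the final step needs even less than you supply, since $u(r)=G^{-1}\left[\frac{r^{\theta}}{A}e^{z(t)}\right]$ holds identically by (\ref{S4E26}), so the claim $u(r)=G^{-1}\left[\frac{r^{\theta}}{A}(1+o(1))\right]$ is just the statement $e^{z(t)}=1+o(1)$, with no continuity of $G^{-1}$ required.
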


One important tool in the proof of the uniqueness is the following result from \cite{LLD00}.

\begin{proposition} {\rm (see \cite[Lemma~4.2]{LLD00})}\label{P1}
Suppose that $C(t)$ and $D(t)$ are continuous functions satisfying $\lim_{t\to\infty}C(t)=C>0$ and $\lim_{t\to\infty}D(t)=D>0$.
Let $z(t)$ be a solution of 
$$
z_{tt}-C(t)z_t+D(t)z=0\quad \mbox{for large } t.
$$
If $z(t)$ is bounded as $t\to\infty$, then $z(t)\equiv 0$.
\end{proposition}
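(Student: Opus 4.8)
The plan is to treat the equation as a linear system with asymptotically constant coefficients and to exploit the fact that the frozen limit equation $z_{tt}-Cz_t+Dz=0$ is exponentially unstable at the origin. Its characteristic roots are $\lambda_\pm=\tfrac12\big(C\pm\sqrt{C^2-4D}\big)$; since $C>0$ and $D>0$, their sum $C$ and product $D$ are positive, so $\mathrm{Re}(\lambda_\pm)=C/2>0$ in the complex case while both roots are positive in the real case. Hence every nonzero solution of the limit equation grows exponentially, and one expects the same for the perturbed equation once $t$ is large. Writing $\mathbf z=(z,z_t)$, the goal becomes: a \emph{bounded} trajectory of the perturbed system must be the trivial one.

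First I would upgrade the hypothesis to boundedness of the full vector, i.e. show that if $|z|\le M$ for large $t$ then $z_t$ is bounded as well. Indeed $p:=z_t$ solves the first-order equation $p_t=C(t)p-D(t)z$, and for large $t$ one has $C(t)\ge C/2$ and $|D(t)z|\le D_1M$. If $p$ ever exceeds the threshold $K:=4D_1M/C$ at some large time, then $p_t\ge\tfrac{C}{4}p>0$ as long as $p\ge K$, so $p$ increases without bound and $z=\int p$ leaves every bounded set, contradicting $|z|\le M$; the symmetric argument rules out $p<-K$. Hence $|z_t|\le K$ for large $t$ and $\mathbf z$ is bounded.

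Next I would build an explicit Lyapunov function adapted to the instability. Because $C(t),D(t)$ are only continuous, I avoid differentiating them by \emph{freezing} the quadratic form: set $V=\tfrac12\big(z_t^2+Dz^2\big)-\varepsilon z z_t$, which is positive definite for $\varepsilon$ small. Using $z_{tt}=C(t)z_t-D(t)z$, a direct computation gives $V_t=(C(t)-\varepsilon)z_t^2+\varepsilon D(t)z^2+\big(D-D(t)-\varepsilon C(t)\big)z z_t$. As $t\to\infty$ the coefficients tend to those of the form $(C-\varepsilon)z_t^2-\varepsilon C z z_t+\varepsilon D z^2$, whose determinant $\varepsilon\big(DC-\varepsilon D-\varepsilon C^2/4\big)$ is positive for small $\varepsilon$, so this limiting form is strictly positive definite. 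Consequently there are $c,c'>0$ with $V_t\ge c|\mathbf z|^2\ge c'V$ for large $t$, so $V$ grows at least like $e^{c't}$ whenever it is positive, forcing $|\mathbf z|\to\infty$ and contradicting the boundedness established above—unless $V\equiv0$, i.e. $\mathbf z\equiv0$ for large $t$. Backward uniqueness for the linear equation then yields $z\equiv0$ throughout its interval of validity.

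The main obstacle is the interplay of the two weak hypotheses: the coefficients are merely continuous (so no energy identity is available and the time dependence must be carried as a vanishing perturbation inside a frozen Lyapunov form), and only $z$—not $z_t$—is assumed bounded, which is why the preliminary boundedness upgrade for $z_t$ is essential before the instability argument can bite. Everything else is a routine positive-definiteness check together with a Gr\"onwall-type integration.
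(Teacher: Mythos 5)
Your proof is correct, but note that there is nothing in the paper to compare it against: the paper does not prove Proposition~\ref{P1} at all, it simply imports it from \cite[Lemma~4.2]{LLD00}. So what you have produced is a self-contained substitute for an external citation. Checking the details: your preliminary step is sound --- with $p=z_t$, $C(t)\ge C/2$, $D(t)\le D_1$ and $|z|\le M$ for $t\ge T_0$, the region $\{p\ge K\}$ with $K=4D_1M/C$ is forward invariant and there $p_t\ge \tfrac{C}{4}p>0$, so $z(t)\ge z(t_1)+K(t-t_1)$ leaves every bounded set, and symmetrically for $p\le -K$; hence $(z,z_t)$ is bounded. Your computation of $V_t$ for $V=\tfrac12\bigl(z_t^2+Dz^2\bigr)-\varepsilon z z_t$ is exactly right, the sign choice of the cross term is the one that produces the good term $+\varepsilon D(t)z^2$, and the limiting quadratic form $(C-\varepsilon)z_t^2-\varepsilon C z z_t+\varepsilon D z^2$ is indeed positive definite for small $\varepsilon$ (determinant $\varepsilon\bigl(DC-\varepsilon D-\varepsilon C^2/4\bigr)>0$), so eventually $V_t\ge c\,|(z,z_t)|^2\ge c'V$; Gr\"onwall plus boundedness of $V$ forces $V\le 0$, positive definiteness of $V$ (valid for $\varepsilon^2<D$) then gives $V\equiv 0$ for large $t$, and backward uniqueness for linear systems with continuous coefficients finishes the argument. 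Two features of your route are worth highlighting: it never differentiates $C(t)$ or $D(t)$ (they are only assumed continuous, so freezing the quadratic form rather than using a time-dependent energy is the right move), and it treats the nodal case $C^2\ge 4D$ and the spiral case $C^2<4D$ of the limit equation uniformly, whereas a naive phase-plane or characteristic-root analysis would have to split cases.
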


\begin{theorem}\label{S4T1}
The equation (\ref{S1E1}) has at most one degenerate solution.
\end{theorem}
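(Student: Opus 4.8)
The plan is to use the change of variables \eqref{S4E26} to reduce the uniqueness question to a statement about a single homogeneous linear second order ODE, and then to apply Proposition~\ref{P1}. Suppose $u_1$ and $u_2$ are two degenerate solutions of \eqref{S1E1} and let $z_1,z_2$ be the associated functions given by \eqref{S4E26}, i.e. $e^{z_i}=A\,r^{-\theta}G(u_i(r))$ with $t=-\log r$. Set $w:=z_1-z_2$. By Corollary~\ref{C1} each $z_i$ satisfies $z_i(t)\to 0$ and $z_{i,t}(t)\to 0$ as $t\to\infty$; in particular $w$ is bounded and $w(t)\to 0$. The goal is to show $w\equiv 0$, for then $G(u_1)\equiv G(u_2)$ and the strict monotonicity of $G$ forces $u_1\equiv u_2$.

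The key observation is that $z_1$ and $z_2$ solve \emph{the same} non-autonomous equation. Indeed, for a given pair $(z,t)$ the relation $G(u)=A^{-1}e^{z-\theta t}$ determines $u$ uniquely, so the coefficient $g'(u)G(u)/g(u)^2$ entering $T(z)$ in \eqref{S4L4E32} may be regarded as a fixed function $\Phi(z,t)$. With this reading, \eqref{S4L4E29} takes the form $F(z_{tt},z_t,z,t)=0$ for a single $F$, satisfied by both $z_1$ and $z_2$. Subtracting the two identities and applying the mean value theorem in the first three arguments (at fixed $t$), one obtains a homogeneous equation with no source term,
\[
w_{tt}-C(t)\,w_t+D(t)\,w=0 \qquad \textrm{for large } t,
\]
where $C(t)=-\partial_{z_t}F$ and $D(t)=\partial_z F$ are evaluated at intermediate points between $(z_{1,tt},z_{1,t},z_1)$ and $(z_{2,tt},z_{2,t},z_2)$.

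It remains to identify the limits of $C(t)$ and $D(t)$ and to verify they are positive. Since $z_i,z_{i,t}\to 0$ and $\Phi(z,t)\to L$ as $t\to\infty$, the limiting equation is exactly the linearization of the limit problem \eqref{E46} at its equilibrium $(0,0)$, whose coefficient matrix is computed in the proof of Lemma~\ref{S4L9}. Hence $C(t)$ tends to the trace and $D(t)$ to the determinant $b\beta$ of that matrix, both positive because $\mathrm{Re}(\lambda_\pm)>0$. The one delicate point is that $\partial_z F$ also carries the term $-\Phi_z(z,t)(z_t-\theta)^2$ coming from differentiating the coefficient $\Phi$; writing $\mu(u)=g'(u)G(u)/g(u)^2$ one finds $du/dz=G(u)/g(u)$ and
\[
\Phi_z=\mu'(u)\,\frac{G(u)}{g(u)}\longrightarrow \frac{L^2}{q}+L-2L^2=0,
\]
where the limit is evaluated using (G2), Lemma~\ref{S3L1}(ii) and the identity $L=q/(2q-1)$. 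Thus this term does not affect the limit and $D(t)\to b\beta>0$. With $C(t)\to C>0$, $D(t)\to D>0$ and $w$ bounded, Proposition~\ref{P1} yields $w\equiv 0$, whence $u_1\equiv u_2$.

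I expect the main obstacle to be precisely the verification that the $u$-dependent coefficient in $T(z)$ does not spoil the limiting linear equation, i.e. the cancellation $\Phi_z\to \tfrac{L^2}{q}+L-2L^2=0$. This is what guarantees that $D(t)$ converges to the \emph{positive} determinant $b\beta$ rather than to some value that could invalidate the application of Proposition~\ref{P1}; the remaining manipulations (the ``same equation'' reformulation and the mean value theorem bookkeeping) are routine once the coefficient limits are under control.
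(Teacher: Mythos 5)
Your proposal is correct and takes essentially the same route as the paper's own proof: subtract the two transformed equations, use the mean value theorem to produce a homogeneous linear equation $z_{tt}-C(t)z_t+D(t)z=0$, observe that the $u$-dependent coefficient is harmless because of the cancellation $\frac{L^2}{q}+L-2L^2=0$ (the paper writes this as $1+(\frac{1}{q}-2)L\to 0$), and conclude with Proposition~\ref{P1}. The only difference is presentational — the paper does the bookkeeping term by term via the $V$ and $W$ differences rather than through a single function $F(z_{tt},z_t,z,t)$, and its stated limit for $C(t)$ differs from yours by a constant, which is immaterial since positivity of the limits is all that Proposition~\ref{P1} requires.
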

\begin{proof}
Let $u_j(r)$, $j=1,2$, be two degenerate solutions of (\ref{S1E1}).
Let $z_j(t)$, $j=1,2$, be defined by the transformation \eqref{S4E26}, that is,
$$
e^{z_j(t)}=\frac{r^\theta}{A}G[u_j(r)] \ \ \textrm{and}\ \ t=-\log r.
$$
By \eqref{S4L4E29} we see that $z_j$ satisfies
\[
z_{jtt}-az_{jt}+b(1-e^{-\beta z_j})+(1-L)z_j^2+T(z_j)=0,
\]
where $a$ and $b$ are the positive constants defined in \eqref{S4L4E30} and $T$ is defined in \eqref{S4L4E32} of Lemma \ref{S4L4}. Here and in the subsequent arguments, $z_{jt}$ and $z_{jtt}$, $j=1,2$,  stand for $\frac{dz_j}{dt}$ and $\frac{d^2z_j}{dt^2}$.   
Let $z(t):=z_2(t)-z_1(t)$.
By Corollary~\ref{C1} we see that for $j=1,2$ one has
\begin{equation}\label{T1E1}
z_j(t)\to 0\,, \;\; z_{jt}(t)\to 0 \quad\mbox{ as }\quad t\to\infty.
\end{equation}
Then, $z$ satisfies
\begin{equation}\label{eq1z}
z_{tt}-az_{t}-b\frac{e^{-\beta z_2}-e^{-\beta z_1}}{z_2-z_1}z+(1-L)(z_{2t}+z_{1t}-2\theta)z_t+V(z_2)-V(z_1)+W(z_2)-W(z_1)=0,
\end{equation}
where for $j=1,2$ we defined
\begin{align*}
V(z_j)&=b\Big[1-\Big(1-\frac{z_{jt}}{\theta}\Big)^{1-\beta}\Big]e^{-\beta z_j} \; , \\
W(z_j)&=\Big[L-\frac{g'(u_j)G(u_j)}{g(u_j)^2}\Big](z_{jt}-\theta)^2\; , 
\end{align*}
and  we extended the quotient $(e^{z_2}-e^{z_1})/(z_2-z_1)$ by
\begin{equation}\label{T1E1aa}
\frac{e^{-\beta z_2}-e^{-\beta z_1}}{z_2-z_1}=-\beta e^{-\beta z_2}\quad\mbox{ if } \;\; z_1=z_2.
\end{equation}
Observe first that
\begin{equation}\label{T1E1a}
\begin{aligned}
V(z_2)-V(z_1)=& b\Big[\Big(1-\frac{z_{1t}}{\theta}\Big)^{1-\beta}-\Big(1-\frac{z_{2t}}{\theta}\Big)^{1-\beta}      \Big]e^{-\beta z_1}\\
&+b\left[1-\Big(1-\frac{z_{2t}}{\theta}\Big)^{1-\beta}\right]\cdot \frac{e^{-\beta z_2}-e^{-\beta z_1}}{z_2-z_1}z.
\end{aligned}
\end{equation}
Using the mean value theorem, there exists $\bar z$ between $z_{1t}$ and $z_{2t}$ such that 
\[
b\Big[\Big(1-\frac{z_{1t}}{\theta}\Big)^{1-\beta}-\Big(1-\frac{z_{2t}}{\theta}\Big)^{1-\beta}      \Big]e^{-\beta z_1}=-\frac{b(\beta-1)}{\theta}e^{-\beta z_1}\Big(1-\frac{\bar z}{\theta}\Big)^{-\beta}z_t.
\]
From \eqref{T1E1} we have $\bar z\to 0$ as $t\to \infty$.

Next, we estimate the difference $W(z_2)-W(z_1)$ and we write
\begin{equation}\label{T1E1c}
W(z_2)-W(z_1)=\Big[L-\frac{g'(u_2)G(u_2)}{g(u_2)^2}\Big](z_{1t}+z_{2t}-2\theta)z_t-
\Big[\frac{g'(u_2)G(u_2)}{g(u_2)^2}-\frac{g'(u_1)G(u_2)}{g(u_2)^2}\Big](z_{1t}-\theta)^2.
\end{equation}
To estimate the last term in \eqref{T1E1c}, let us denote $w_j=G[u_j]$, $j=1,2$. 
Since
\[
\frac{d}{dw}\left[\frac{wg'(G^{-1}[w])}{g(G^{-1}[w])^2}\right]=
\left\{1+\left[\frac{g(G^{-1}[w])g''(G^{-1}[w])}{g'(G^{-1}[w])^2}-2\right]
\frac{wg'(G^{-1}[w])}{g(G^{-1}[w])^2}\right\}
\frac{wg'(G^{-1}[w])}{g(G^{-1}[w])^2}\frac{1}{w},
\]
it follows from the mean value theorem that there exists $\bar{w}$ between $w_2$ and $w_1$ such that
\begin{multline*}
\frac{w_2g'(G^{-1}[w_2])}{g(G^{-1}[w_2])^2}-
\frac{w_1g'(G^{-1}[w_1])}{g(G^{-1}[w_1])^2}\\
=\left\{1+\left[\frac{g(G^{-1}[\bar{w}])g''(G^{-1}[\bar{w}])}{g'(G^{-1}[\bar{w}])^2}-2\right]
\frac{\bar{w}g'(G^{-1}[\bar{w}])}{g(G^{-1}[\bar{w}])^2}\right\}
\frac{\bar{w}g'(G^{-1}[\bar{w}])}{g(G^{-1}[\bar{w}])^2}\frac{1}{\bar{w}}(w_2-w_1).
\end{multline*}
Let $w_j=G[u_j]$, $j=1,2$, in the above equality.
Since $G$ is continuous and increasing there exists $\bu$ between $u_2$ and $u_1$ so that $\bar{w}=G[\bu]$ and
\begin{multline}\label{T1E1d}
\frac{g'(u_2)G[u_2]}{g(u_2)^2}-\frac{g'(u_1)G[u_1]}{g(u_1)^2}\\
=\left\{1+\left[\frac{g(\bar{u})g''(\bar{u})}{g'(\bar{u})^2}-2\right]
\frac{g'(\bar{u})G[\bar{u}]}{g(\bar{u})^2}\right\}
\frac{g'(\bar{u})G[\bar{u}]}{g(\bar{u})^2}
\frac{Ae^{-\theta t}}{G[\bar{u}]}
\frac{e^{-\beta z_2}-e^{-\beta z_1}}{z_2-z_1}z,
\end{multline}
where we extended $(e^{-\beta z_2}-e^{-\beta z_1})/(z_2-z_1)$ by \eqref{T1E1aa} if $z_1=z_2$. 
Therefore, from \eqref{eq1z} and \eqref{T1E1a}-\eqref{T1E1d}, $z$ satisfies
\[
z_{tt}-C(t)z_{t}+D(t)z=0,
\]
where
\begin{align*}
C(t)=& \; a-(1-L)(z_{2t}+z_{1t}-2\theta)+\frac{b(\beta-1)}{\theta}e^{-\beta z_1}\Big(1-\frac{\bar z}{\theta}\Big)^{-\beta}\\
&\; -\left[L-\frac{g'(u_2)G[u_2]}{g(u_2)^2}\right](z_{2t}+z_{1t}-2\theta),\\
D(t)=&-b\Big(1-\frac{z_{2t}}{\theta}\Big)^{1-\beta} \cdot \frac{e^{-\beta z_2}-e^{-\beta z_1}}{z_2-z_1}\\
& -\left\{1+\left[\frac{g(\bar{u})g''(\bar{u})}{g'(\bar{u})^2}-2\right]
\frac{g'(\bar{u})G[\bar{u}]}{g(\bar{u})^2}\right\}
\frac{g'(\bar{u})G[\bar{u}]}{g(\bar{u})^2}
\frac{Ae^{-\theta t}}{G[\bar{u}]}
\frac{e^{-\beta z_2}-e^{-\beta z_1}}{z_2-z_1}(z_{1t}-\theta)^2.
\end{align*}
Since $g'(u_2)G[u_2]/g(u_2)^2\to L$, by \eqref{T1E1} we see that
\begin{equation}\label{T1E2}
C(t)\to a+2\theta(1-L)+\frac{b(\beta-1)}{\theta}\ \ \textrm{as}\ \ t\to\infty.
\end{equation}
Since $\bar{u}$ is between $u_2$ and $u_1$, we see that $\bar{u}(r)\to 0$ as $r\to 0$, and hence
\[
1+\left(\frac{g(\bar{u})g''(\bar{u})}{g'(\bar{u})^2}-2\right)
\frac{g'(\bar{u})G[\bar{u}]}{g(\bar{u})^2}
\to 1+\left(\frac{1}{q}-2\right)L =0
\ \ \textrm{as}\ \ r\to 0.
\]
By Corollary~\ref{C1} we see that $G[u_j(r)]=\frac{e^{-\theta t}}{A}(1+o(1))$ as $t\to\infty$.
Since $G$ is monotone, we see that 
$$
G[\bar{u}]=\frac{e^{-\theta t}}{A}(1+o(1))\quad\mbox{ as }\quad t\to\infty.
$$
Then, we find
\begin{equation}\label{T1E3}
D(t)\to b\beta\ \ \textrm{as}\ \ t\to\infty.
\end{equation}
Because of (\ref{T1E1}), (\ref{T1E2}) and (\ref{T1E3}), by Proposition~\ref{P1} we see that $z(t)\equiv 0$, and hence the conclusion holds.
\end{proof}

\begin{proof}[Proof of Theorem~\ref{THA} (i)-(iii).]

(i) The existence of a solution was already proved in Section~3.
The uniqueness of a solution to (\ref{S1E1}) follows from Theorem~\ref{S4T1}.

(ii)
By Corollary \ref{C1} we have
$$
u(r)=G^{-1}\left[\frac{r^{\theta}}{A}\psi(r)\right],
$$
where $0<\psi\in C[0,r_0)$ and $\psi(r)\to 1$ as $r\to 0$. 
Let $s=G^{-1}[A^{-1}r^{\theta}\psi(r)]$, so that $s\to 0$ as $r\to 0$ and
$r=\left(\frac{A}{\psi(r)}\right)^{1/\theta}G(s)^{1/\theta}$.
Then
\begin{multline}\label{S4Lim1}
u'(0)=\lim_{r\to 0}\frac{u(r)}{r}
=\lim_{r\to 0}\frac{G^{-1}[A^{-1}r^{\theta}(\psi(r))]}{r}
=\lim_{r\to 0}\frac{\psi(r)^{1/\theta}s}{A^{1/\theta}G(s)^{1/\theta}}
=\left[\frac{1}{A} \lim_{s\to 0}\frac{ s^{\theta}}{G(s)}\right]^{1/\theta}
\end{multline}
which proves \eqref{S1E8}.

Assume next that \eqref{S1E9} holds and take $\e>0$ so that $\frac{\gamma-\alpha}{\beta}<\frac{q+\e}{q+\e-1}$.
By condition $(G2)$ it follows  
$$
\frac{g'(s)^2}{g(s)g''(s)} \le q+\e \quad\mbox{ for all }s>0 \mbox{ small}.
$$ 
From here we see that 
$$
\left( \frac{g}{g'} \right)'(s)\leq \frac{q+\e-1}{q+\e}\quad\mbox{ for all }s>0 \mbox{ small}.
$$
Integrating in the above estimate we find that $s\longmapsto g(s)s^{-\frac{q+\e}{q+\e-1}}$ is increasing in a small neighborhood of the origin.  Thus, there exists $c>0$ such that 
$1/g(s)\ge cs^{-\frac{q+\e}{q+\e-1}}$  for  all $s>0$  small.
This further yields
$$
\frac{s^{\theta-1}}{g(s)}\ge c s^{\theta-1-\frac{q+\e}{q+\e-1}}\to\infty
\ \ \textrm{as}\ s\to 0,
$$
since, by our choice of $\e>0$ we have  $\theta-1-\frac{q+\e}{q+\e-1}<0$. By L'Hospital's rule it follows that $s^{\theta}/G(s)\to \infty$ as $s\to 0$ and then, by \eqref{S4Lim1} we have $u'(0)=\infty$.

(iii) The convergence part follows from the uniqueness of a solution to \eqref{S1E1} and \eqref{S3L3E20}.
\end{proof}

\section{Proof of Theorem~\ref{THA}~(iv)}
Let $u(r,a)$ be a solution of (\ref{S1E4}).
We define
\begin{equation}\label{S5E0}
\tu(s)=G_q^{-1}\left[\lambda^{-\theta}G\left[u(r,a)\right]\right],\quad s=\frac{r}{\lambda}
\ \ \textrm{and}\ \ \lambda=\left(\frac{G[a]}{G_q[1]}\right)^{1/\theta}.
\end{equation}
Then $\tu(s)$ satisfies
\begin{equation}\label{S5E1}
\begin{cases}
s^{-(\gamma-1)}(s^{\alpha}\tu'(s)^{\beta})'-\frac{1}{g_q(\tu(s))^{\beta}}
+s^{-\gamma+\alpha-1}\frac{g_q(\tu(s))}{G_q(\tu(s))}\left(L-\frac{g'(u(r))G(u(r))}{g(u(r))^2}\right)\tu'(s)^{\beta+1}=0, & s>0,\\
\tu(0)=1,\\
\tu(0)=0.
\end{cases}
\end{equation}

\begin{lemma}\label{S5L1}
Let $v(s,1)$ be a solution of (\ref{S1E10}) with $b=1$.
Then,
\[
\tu(s)\to v(s,1)\ \ \textrm{in}\ \ C_{\rm{loc}}[0,\infty)\ \ \textrm{as}\ \ a\to 0.
\]
\end{lemma}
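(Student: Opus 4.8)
The plan is to read the transformed equation \eqref{S5E1} as a regular perturbation of the limiting problem \eqref{S1E10} with $b=1$, the perturbation being governed by the coefficient
\[
\eta_a(s):=L-\frac{g'(u(r))G(u(r))}{g(u(r))^2},\qquad r=\lambda s,
\]
which, by \eqref{S1E3}, tends to $0$ as its argument tends to $0$. Since $G[u(\lambda s,a)]=\lambda^{\theta}G_q[\tu(s)]$ and $\lambda\to 0$ as $a\to 0$ (because $G[a]\to 0$ while $G_q[1]$ is a fixed positive constant), the argument $u(\lambda s,a)=G^{-1}[\lambda^{\theta}G_q[\tu(s)]]$ is forced to $0$ on any range of $s$ where $\tu$ stays bounded, so that $\eta_a\to 0$ locally uniformly there. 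The whole argument then reduces to the standard scheme: uniform a priori bounds, extraction of a limit by compactness, vanishing of the perturbation term in the limit, and uniqueness of the limit to upgrade subsequential to full convergence.

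First I would establish uniform bounds on compact intervals. From \eqref{S5E1} one has the integral identity
\[
s^{\alpha}\tu'(s)^{\beta}=\int_0^s\frac{\sigma^{\gamma-1}}{g_q(\tu)^{\beta}}\,d\sigma-\int_0^s\sigma^{\alpha-1}\frac{g_q(\tu)}{G_q(\tu)}\eta_a(\sigma)\tu'(\sigma)^{\beta+1}\,d\sigma.
\]
Since $\tu$ is increasing with $\tu(0)=1$ and $g_q$ is increasing, the first integrand is bounded by $g_q(1)^{-\beta}$, and $g_q(\tu)/G_q(\tu)$ is bounded on $\{\tu\ge 1\}$. Fixing $S>0$ and a constant $M$ larger than $2\sup_{[0,S]}v(\,\cdot\,,1)$, I would run a continuity (bootstrap) argument on the maximal subinterval of $[0,S]$ on which $\tu\le M$ and $\tu'\le M$. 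On that subinterval $u(\lambda s,a)=O(\lambda^{\theta})$, so $\eta_a$ is bounded by some $\e_a\to 0$; the second integral is then $O(\e_a)$ and the identity yields $\tu'(s)\le \gamma^{-1/\beta}s^{\theta-1}+o(1)$, hence $\tu(s)\le 1+\gamma^{-1/\beta}S^{\theta}/\theta+o(1)$. For $a$ small this is strictly below $M$, so the subinterval exhausts $[0,S]$. Differentiating the identity then bounds $\tu''$ uniformly in $a$ as well.

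With these bounds in hand, a diagonal Arzel\`a--Ascoli argument over an exhaustion $S_n\to\infty$ shows that every sequence $a_n\to 0$ has a subsequence along which $\tu(\,\cdot\,,a_n)\to w$ in $C^1_{\rm loc}[0,\infty)$. Passing to the limit in the integral identity, the perturbation integral vanishes because $\eta_a\to 0$ locally uniformly while $\tu'$ stays bounded, so $w$ satisfies
\[
s^{\alpha}w'(s)^{\beta}=\int_0^s\frac{\sigma^{\gamma-1}}{g_q(w)^{\beta}}\,d\sigma,\qquad w(0)=1,\ w'(0)=0,
\]
that is, $w$ solves \eqref{S1E10} with $b=1$. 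By uniqueness of the regular solution of this singular initial value problem, $w=v(\,\cdot\,,1)$ independently of the chosen subsequence, whence the full family converges, $\tu(\,\cdot\,,a)\to v(\,\cdot\,,1)$ in $C_{\rm loc}[0,\infty)$.

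The main obstacle is the a priori bound rather than the identification of the limit: the perturbation term is superlinear in $\tu'$ (it carries the factor $\tu'^{\beta+1}$), so it cannot be discarded outright but must be controlled self-consistently. The bootstrap is what makes this work, and it rests on the two facts that on a region where $\tu$ is bounded the underlying variable $u(\lambda s,a)$ is $O(\lambda^{\theta})$ (so $\eta_a$ is uniformly small), and that the unperturbed part already furnishes an $s$-integrable, $a$-independent bound on $\tu'$.
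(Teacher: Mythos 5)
Your overall scheme---read \eqref{S5E1} as a perturbation of \eqref{S1E10}, prove uniform bounds, extract a limit by Arzel\`a--Ascoli, and identify it by uniqueness---is reasonable, and the compactness-plus-uniqueness step is a legitimate expansion of what the paper compresses into ``continuity of $\tu$ with respect to the nonlinearity.'' The genuine gap is exactly in the step you yourself flag as the main obstacle: the a priori bound. (Two smaller slips first: multiplying the third term of \eqref{S5E1} by $\sigma^{\gamma-1}$ gives $\sigma^{\alpha-2}$, not $\sigma^{\alpha-1}$, in your perturbation integral; and $u(\lambda s,a)=G^{-1}\bigl[O(\lambda^{\theta})\bigr]$, which is not $O(\lambda^{\theta})$ since $G^{-1}$ is not Lipschitz at $0$---harmless, because all you use is uniform smallness of $u$.) On your bootstrap region the estimate actually obtained is
\begin{equation*}
s^{\alpha}\tu'(s)^{\beta}\le\frac{s^{\gamma}}{\gamma\, g_q(1)^{\beta}}+C\e_a M^{\beta+1}s^{\alpha-1},
\qquad\text{hence}\qquad
\tu'(s)\le \gamma^{-1/\beta}g_q(1)^{-1}s^{\theta-1}+\bigl(C\e_a M^{\beta+1}\bigr)^{1/\beta}s^{-1/\beta},
\end{equation*}
and the second term is \emph{not} $o(1)$ uniformly on $(0,S]$: it blows up as $s\to 0$, and at $s\sim\e_a$ it is of size $C^{1/\beta}M^{1+1/\beta}>M$. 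So you cannot conclude $\tu'<M$ near the origin, and the maximal-interval argument does not close: nothing prevents it from terminating at some $s^*\to 0$ with $\tu'(s^*)=M$. Worse, when $\beta=1$ (the Laplace case, the most important instance of the theorem) even your bound on $\tu$ fails, since integrating the term $C\e_a M^{2}\sigma^{-1}$ diverges logarithmically at the origin.

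The paper avoids this difficulty entirely: Lemma~\ref{S3L3}~(ii) is an \emph{unconditional} a priori bound, giving for each small $\delta>0$ constants $\delta_0,r_{\delta}>0$ independent of $a$ such that $u(r,a)<\delta$ on $(0,r_{\delta})$ whenever $0<a<\delta_0$. Since $\lambda\to 0$, the whole range $r=\lambda s$, $s\in[0,s_0]$, lies in $(0,r_{\delta})$ for small $a$; thus $L-g'(u)G(u)/g(u)^2\to 0$ uniformly on $[0,s_0]$ with no bootstrap and no boundedness hypothesis on $\tu$, decoupling the smallness of the coefficient from the unknown, after which continuous dependence (or your compactness scheme) finishes the proof. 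If you want to keep your self-contained route, you must repair the bootstrap near $s=0$, for instance as follows: on $\{\tu\le M\}$ one has $u\ge a$, hence $f(u(r,a))\ge f(a)$ and, as in Lemma~\ref{S3L2}~(i), $u'(r,a)\le(\gamma f(a))^{-1/\beta}r^{\theta-1}$; moreover $G(u)/G(a)=G_q(\tu)/G_q(1)\le G_q(M)/G_q(1)$, and integrating $g'/g\le(L+\e)g/G$ (available from \eqref{S1E3} for small arguments) gives $g(u)/g(a)\le\bigl(G_q(M)/G_q(1)\bigr)^{L+\e}=:C_M$. Combining these with $g_q(\tu)\tu'=\lambda^{1-\theta}g(u)u'$ yields $\tu'\le \gamma^{-1/\beta}C_M\, g_q(1)^{-1}s^{\theta-1}$ on the bootstrap region, uniformly in $a$ and vanishing at $s=0$, which is the bound your argument actually needs.
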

\begin{proof}
Let $s_0>0$ be fixed. We claim that
\begin{equation}\label{S5L1E1}
u(\lambda s,a)\to 0\ \ \textrm{uniformly in}\ \ s\in[0,s_0]\ \ \textrm{as}\ \ a\to 0.
\end{equation}
Indeed, let $\delta>0$ be small.
By Lemma~\ref{S3L3}~(ii) there exist $r_{\delta}>0$ and $\delta_0>0$ such that
\begin{equation}\label{S5L1E2}
\textrm{if}\ 0<a<\delta_0,\ \textrm{then}\ 0\le u(r,a)<\delta\ \textrm{for all}\ 0<r<r_{\delta}.
\end{equation}
If $a>0$ is small, then $s_0<r_{\delta}/\lambda$, because $\lim_{a\to 0}\lambda=\lim_{a\to 0}\left(G[a]/G_q[1]\right)^{1/\theta}=0$.
By (\ref{S5L1E2}) we see that if $a>0$ is small, then $0\le u(\lambda s,a)<\delta$ for $0\le s\le s_0$.
Since $\delta>0$ can be chosen arbitrarily small, we see that (\ref{S5L1E1}) follows.

By (\ref{S5L1E1}) we have
\[
\frac{g'(u(\lambda s,a))G[u(\lambda s,a)]}{g(u(\lambda s,a))^2}\to L
\ \ \textrm{uniformly in}\ \ s\in[0,s_0]\ \ \textrm{as}\ \ a\to 0.
\]
Clearly $G_q[\tu]\ge G_q[1]>0$ and the denominator $G_q[\tu]$ in (\ref{S5E1}) is uniformly bounded away from $0$.
Because of the continuity of $\tu(s)\in C[0,s_0)$ with respect to the nonlinearity in (\ref{S5E1}), we see that $\tu(s)\to v(s,1)$ in $C[0,s_0)$ as $a\to 0$.
Since $s_0>0$ can be chosen arbitrarily large, the conclusion follows.
\end{proof}

\begin{lemma}\label{S5L2}
Let $u^*(r)$ be the degenerate solution given by Theorem~\ref{THA}, and let $s$ and $\lambda$ be defined by (\ref{S5E0}).
Let $\tu^*(s):=G^{-1}_q\left[\lambda^{-\theta}G[u^*(r)]\right]$. Then
\[
\tu^*(s)\to v^*(s)\ \ \textrm{in}\ \ C_{\rm{loc}}(0,\infty)\ \ \textrm{as}\ \ a\to 0,
\]
where $v^*(s)=G_q^{-1}[A^{-1}s^{\theta}]$ which is defined by (\ref{S1E11}).
\end{lemma}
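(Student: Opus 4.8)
The plan is to insert the sharp asymptotic expansion of $u^*$ near the origin, established in Corollary~\ref{C1}, into the definition of $\tu^*$ and then pass to the limit through the continuous map $G_q^{-1}$. The whole point is that once the $r^\theta$-scaling is divided out, the only surviving $a$-dependence is a harmless multiplicative factor tending to $1$.

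First I would recall that by Corollary~\ref{C1} (see also the proof of Theorem~\ref{THA}~(ii)) one may write $G[u^*(r)]=\frac{r^\theta}{A}\psi(r)$ with $\psi\in C[0,r_0)$, $\psi>0$ and $\psi(r)\to1$ as $r\to0$. Substituting $r=\lambda s$ into the definition of $\tu^*$ and cancelling the factor $\lambda^{-\theta}(\lambda s)^\theta$ gives
$$
\tu^*(s)=G_q^{-1}\!\left[\lambda^{-\theta}G[u^*(\lambda s)]\right]=G_q^{-1}\!\left[\frac{s^\theta}{A}\,\psi(\lambda s)\right].
$$
The entire dependence on $a$ is now confined to the single factor $\psi(\lambda s)$.

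Next I would use that $\lambda=(G[a]/G_q[1])^{1/\theta}\to0$ as $a\to0$, already observed in the proof of Lemma~\ref{S5L1}. Fixing a compact interval $[s_1,s_2]\subset(0,\infty)$, the bound $0<\lambda s\le\lambda s_2\to0$ together with $\psi(r)\to1$ yields $\sup_{s\in[s_1,s_2]}|\psi(\lambda s)-1|\le\sup_{0<r\le\lambda s_2}|\psi(r)-1|\to0$, so $\frac{s^\theta}{A}\psi(\lambda s)\to\frac{s^\theta}{A}$ uniformly on $[s_1,s_2]$, and for $a$ small these arguments remain inside a fixed compact subset of $(0,\infty)$. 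Applying the (uniformly continuous) inverse $G_q^{-1}$ on that compact set then gives $\tu^*(s)\to G_q^{-1}[A^{-1}s^\theta]=v^*(s)$ uniformly on $[s_1,s_2]$, which is exactly the asserted convergence in $C_{\rm loc}(0,\infty)$.

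I do not expect a genuine analytic obstacle here; the only subtle point is the topology. The convergence can be claimed only on compact subsets of $(0,\infty)$, not of $[0,\infty)$ as in Lemma~\ref{S5L1}, because when $q=1$ we have $G_q=e^{(\cdot)}$ and $v^*(s)=\log(A^{-1}s^\theta)\to-\infty$ as $s\to0$; thus $v^*$ is not continuous up to the origin and no uniform control is available near $s=0$. Keeping the interval away from $s=0$ is precisely what keeps the arguments of $G_q^{-1}$ bounded away from the boundary of its domain, making $G_q^{-1}$ uniformly continuous on the relevant range and closing the argument.
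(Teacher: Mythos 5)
Your proof is correct and follows essentially the same route as the paper: write $G[u^*(r)]=A^{-1}r^\theta(1+o(1))$ via Corollary~\ref{C1}, substitute $r=\lambda s$ so the scaling factors cancel and $\tu^*(s)=G_q^{-1}\left[A^{-1}s^\theta\psi(\lambda s)\right]$, then use $\lambda\to 0$ to get uniform convergence of the argument on compact subsets of $(0,\infty)$ and conclude through the continuity of $G_q^{-1}$. Your added remarks on the uniform continuity of $G_q^{-1}$ on the relevant compact range and on why the topology must be $C_{\rm loc}(0,\infty)$ rather than $C_{\rm loc}[0,\infty)$ (the case $q=1$) are correct refinements of details the paper leaves implicit.
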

\begin{proof}
There exists a continuous function $\rho (r)$ such that $u^*(r)=G^{-1}[A^{-1} r^{\theta}(1+\rho (r))]$ and $\rho(r)\to 0$ as $r\to 0$.
We have
\[
\tu^*(s)=G^{-1}_q\left[\lambda^{-\theta}G\left[G^{-1}[A^{-1} r^{\theta}(1+\rho(\lambda s))]\right]\right]
=G_q^{-1}\left[A^{-1} s^{\theta}(1+\rho(\lambda s))\right].
\]
Let $0<s_0<s_1$ be fixed.
Since $\rho(r)\to 0$ as $r\to 0$, we see that $\rho(\lambda s)\to 0$ uniformly in $s\in[s_0,s_1]$ as $a\to 0$.
Thus, $\tu^*(s)\to G_q^{-1}[A^{-1} s^{\theta}]$ uniformly in $s\in [s_0,s_1]$ as $a\to 0$.
Since $s_0$ and $s_1$ can be chosen arbitrary, the conclusion follows.
\end{proof}

In the proof of Theorem~\ref{THA}~(iv) we use the following:

\begin{proposition}\label{S1P1}
Let $q_c$ be defined by (\ref{qc}).
Assume that $b>0$ (resp. $b\in\R$) if $q>1$ (resp. if $q=1$).
Let $v(s,b)$ be a solution of (\ref{S1E10}), and let $v^*(s)$ be the 
solution given by (\ref{S1E11}).
If $q<q_c$, then $\calZ_{(0,\infty)}[v(\,\cdot\,,b)-v^*(\,\cdot\,)]=\infty$.
\end{proposition}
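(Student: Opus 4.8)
The plan is to carry the model problem \eqref{S1E10} over to the autonomous equation \eqref{E46} by the Emden-type substitution of Section~4, and then to read off the infinitely many intersections from the spiral character of the equilibrium $(0,0)$ when $q<q_c$.

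First I would set, for the regular solution $v=v(\,\cdot\,,b)$,
\[
e^{w(t)}=\frac{A}{s^{\theta}}\,G_q[v(s)],\qquad t=-\log s ,
\]
so that $w\equiv 0$ corresponds to $v\equiv v^*$ and, since $G_q$ is strictly increasing, the zeros of $w$ are exactly the points where $v(\,\cdot\,,b)$ meets $v^*$. The decisive feature is that $g_q$ satisfies $g_q'G_q/g_q^2\equiv L$ identically (this is where $L=q/(2q-1)$ comes from in the model), so the non-autonomous remainder appearing in \eqref{S4L4E32} vanishes and $w$ solves the autonomous equation \eqref{E46}, equivalently the planar system \eqref{E47}. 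Since \eqref{S1E10} is invariant under a one-parameter scaling group which fixes $v^*$ and realises every admissible value of $b$, the intersection number does not depend on $b$, and I may normalise $b$ conveniently.

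The analysis then takes place as $s\to\infty$, that is $t\to-\infty$; near $s=0$ one has $w\to+\infty$ (because $v(0)=b$ while $G_q[v^*]\to 0$), so no intersections accumulate there. By the integral estimates already used for \eqref{S1E1} in Lemmas~\ref{S4L1}--\ref{S4L3} and in \eqref{S4L3+E27}, applied verbatim to $g_q$, the quantity $w_t$ is bounded and $G_q[v(s)]\ge c\,s^{\theta}$, so $w$ is bounded below. For the upper bound and the limit I would invoke the energy functions $\Phi$ (for $q>1$) and $\Psi$ (for $q=1$) constructed in the proof of Lemma~\ref{S4L9}: each is coercive in $(w,w_t)$ and monotone along the flow, so its boundedness from above on $(-\infty,t_0]$ confines the orbit $(w,w_t)$ to a compact set as $t\to-\infty$. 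A LaSalle-type argument — the largest invariant subset of $\{\,d\Phi/dt=0\,\}=\{w_t=0\}$ reduces to the unique equilibrium $(0,0)$ — then forces $w(t)\to 0$ and $w_t(t)\to 0$ as $t\to-\infty$. This reuses precisely the computations that rule out periodic orbits in Lemma~\ref{S4L9}.

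Finally I would linearise \eqref{E46} at $(0,0)$, obtaining $w_{tt}-C_0w_t+b\beta\,w=0$ with $C_0=a+(\beta-1)b/\theta>0$, whose characteristic roots are the $\lambda_\pm$ of Lemma~\ref{S4L9}. One has $\cRe\lambda_\pm>0$, so $(0,0)$ is a repeller, and a direct substitution of the values \eqref{S4L4E30} together with $L=q/(2q-1)$ shows that the hypothesis $q<q_c$ forces the discriminant $C_0^2-4b\beta$ to be negative, so that $(0,0)$ is a genuine focus. A hyperbolic focus is preserved by the nonlinearity, so the orbit, converging to $(0,0)$ as $t\to-\infty$, spirals into it; the component $w$ changes sign infinitely often, and hence $\calZ_{(0,\infty)}[v(\,\cdot\,,b)-v^*]=\infty$. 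I expect the main obstacle to be the third paragraph — securing the boundedness and the precise limit of the orbit as $s\to\infty$ from the monotone energy functions, in backward time — rather than the routine linear oscillation analysis or the algebraic identification of the focus threshold with $q_c$.
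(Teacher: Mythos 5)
You should know at the outset that this is the one statement in the paper for which no proof is supplied: Proposition \ref{S1P1} is stated and then immediately invoked in the proof of Theorem \ref{THA}~(iv), so there is no author argument to compare yours against. Your outline is, however, correct, and it is surely the intended proof, since it runs entirely on the paper's own machinery: under $e^{w}=A s^{-\theta}G_q[v(s)]$, $t=-\log s$, the identity $g_q'G_q/g_q^2\equiv L=q/(2q-1)$ kills the first term of \eqref{S4L4E32}, so $w$ solves the autonomous equation \eqref{E46} exactly, and zeros of $w$ are precisely the intersections with $v^*$. I have checked the two computations you assert without carrying out, and both are sound. (a) The linearization of \eqref{E46} at $(0,0)$ is $w_{tt}-C_0w_t+b\beta w=0$ with $C_0=a+(\beta-1)b/\theta$, as you state; note this corrects the matrix displayed in Lemma \ref{S4L9}, whose entry $a+(\beta-1)b$ is evidently a typo (harmless there, where only $\cRe\,\lambda_\pm>0$ is used, but fatal here). (b) Writing $x=\theta(1-L)$ and $m=\alpha-\beta$, the constants \eqref{S4L4E30} give $C_0=(\beta+1)x+m$ and $4b\beta=4\theta(\beta x+m)$; since $\tau=\beta\gamma+\alpha-\beta=\beta^2\theta+(\beta+1)m$, the focus condition $C_0^2<4b\beta$ is equivalent to $x<\theta-(\sqrt{\tau}-\sqrt{\theta})^2/(\beta+1)^2$, which under $x=\theta(q-1)/(2q-1)$ is exactly $q<q_c$ with $q_c$ as in \eqref{qc}. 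Your scaling-invariance remark is true but not actually needed, since the rest of the argument is uniform in $b$.

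There is one point you must make explicit inside the third paragraph, the part you yourself flag as the crux. For $\beta>1$ the vector field of \eqref{E47} is singular on the line $\tw=\theta$ (the factor $(1-\tw/\theta)^{1-\beta}$ blows up), and along your orbit one only has $w_t<\theta$ pointwise, with $w_t\to\theta$ as $t\to+\infty$; hence coercivity and monotonicity of $\Phi$ (or $\Psi$) alone do not guarantee that the closure of the backward orbit, and with it the $\alpha$-limit set, stays inside the region where the flow is defined and $C^1$ — which the LaSalle/invariance argument requires. This is easily repaired with estimates you already have: the energy bound gives $\sup_{t\le t_0}w<\infty$, and the integral identity $s^{\alpha}(v')^{\beta}=\int_0^s\sigma^{\gamma-1}g_q(v(\sigma))^{-\beta}\,d\sigma$ together with the monotonicity of $g_q(v)$ yields
\begin{equation*}
\theta-w_t=\frac{s\,g_q(v)v'}{G_q[v]}\ \ge\ \frac{s^{\theta}}{\gamma^{1/\beta}G_q[v]}\ =\ \frac{A}{\gamma^{1/\beta}}\,e^{-w}\ \ge\ \e_0>0\quad\textrm{on}\ (-\infty,t_0],
\end{equation*}
so the backward orbit lies in a compact subset of $\{\tw\le\theta-\e_0\}$. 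With that inserted, your chain — backward compactness from the monotone coercive energy, LaSalle reduction of the $\alpha$-limit set to the unique equilibrium $(0,0)$ (the largest invariant subset of $\{w_t=0\}$), and the classical winding of orbits converging to an equilibrium whose linearization is a focus — is complete and gives $\calZ_{(0,\infty)}[v(\,\cdot\,,b)-v^*(\,\cdot\,)]=\infty$.
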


\begin{proof}[Proof of Theorem~\ref{THA}~(iv)]
Let
\[
\tu(s):=G^{-1}_q[\lambda^{-\theta}G[u(r,a)]],\ \tu^*(s):=G_q^{-1}[\lambda^{-\theta}G[u^*(r)]],\ s:=\frac{r}{\lambda}
\ \textrm{and}\ \lambda:=\left(\frac{G[a]}{G_q[1]}\right)^{1/\theta}.
\]
By Lemmas~\ref{S5L1} and \ref{S5L2} we see that
\begin{equation}\label{PA2E1}
\tu(s)\to v(s,1)\ \ \textrm{in}\ \ C_{\rm{loc}}(0,\infty)\ \ \textrm{as}\ \ a\to 0,
\end{equation}
\begin{equation}\label{PA2E2}
\tu^*(s)\to v^*(s)\ \ \textrm{in}\ \ C_{\rm{loc}}(0,\infty)\ \ \textrm{as}\ \ a\to 0.
\end{equation}
By Proposition~\ref{S1P1} we have
\begin{equation}\label{PA2E3}
\calZ_{(0,\infty)}\left[v(\,\cdot\,,1)-v^*(\,\cdot\,)\right]=\infty.
\end{equation}
Let $r_0>0$ be fixed.
Since the same transformation is applied to both $\tu(s)$ and $\tu^*(s)$, we have
$\calZ_{(0,r_0)}[u(\,\cdot\,,a)-u^*(\,\cdot\,)]=\calZ_{(0,r_0/\lambda)}[\tu(\,\cdot\,)-\tu^*(\,\cdot\,)]$.
For each $M>0$, there are $s_M>0$ and $a_M>0$ such that $\calZ_{(0,s_M)}[\tu(\,\cdot\,)-\tu^*(\,\cdot\,)]\ge M$ for $0<a<a_M$, because of (\ref{PA2E1}), (\ref{PA2E2}) and (\ref{PA2E3}).
If $a>0$ is small, then $(0,s_M)\subset (0,r_0/\lambda)$, and hence
\[
\calZ_{(0,r_0)}[u(\,\cdot\,,a)-u^*(\,\cdot\,)]=\calZ_{(0,r_0/\lambda)}[\tu(\,\cdot\,)-\tu^*(\,\cdot\,)]\ge
\calZ_{(0,s_M)}[\tu(\,\cdot\,)-\tu^*(\,\cdot\,)]\ge M.
\]
Since $M$ can be arbitrarily large, we see that $\calZ_{(0,r_0)}[u(\,\cdot\,,a)-u^*(\,\cdot\,)]\to\infty$ as $a\to 0$.
\end{proof}

\section{Bifurcation diagram}
Let $(\lambda,v(s))$ be a regular solution of (\ref{v}), and let $\Lambda:=\lambda^{1/(\gamma-\alpha+\beta)}$.
Set $u(r):=1-v(s)$ and $r:=\Lambda s$.
Then $u$ satisfies
\[
\begin{cases}
r^{-(\gamma-1)}(r^{\alpha}|u'|^{\beta-1}u')'=\frac{1}{f(u)} & \textrm{for}\ 0<r<\Lambda,\\
0\le u(r)<1 & \textrm{for}\ 0\le r<\Lambda,\\
u(\Lambda)=1.
\end{cases}
\]

\begin{proof}[Proof of Corollary~\ref{C0}~(i)]
By Theorem~\ref{THA}~(i) we have that (\ref{S1E1}) has a unique degenerate solution $u^*(r)$. Also, by Lemma~\ref{S4L1} (i)-(ii) we have
\begin{equation}\label{C0E00}
{u^*}'(r)>0\quad\mbox{ and }\quad u^*(r)\ge G^{-1}\left[\frac{r^{\theta}}{\theta\gamma^{1/\beta}}\right].
\end{equation} 
Using the assumption (G4) (which yields $G(\infty)=\infty$),  there exists a unique $r^*_0>0$ such that $u^*(r^*_0)=1$.
Then $(\bar{\lambda},\bar{v}(s)):=((r^*_0)^{\gamma-\alpha+\beta},1-u^*(r^*_0s))$ is a degenerate solution of (\ref{v}).
Let us prove the uniqueness.
Suppose that there are two degenerate solutions $(\lambda_j,v_j(s))$, $j=1,2$, of (\ref{v}).
Then the two functions $u_j(r):=1-v_j(\Lambda_j^{-1}r)$, $j=1,2$, are  solutions of (\ref{S1E1}), where $\Lambda_j:=\lambda_j^{1/(\gamma-\alpha+\beta)}$.
Because of the uniqueness of the solution $u^*$ of (\ref{S1E1}), we see that $u^*(r)=1-v_j(\Lambda_j^{-1}r)$.
Since $v_j(s)$ satisfies the Dirichlet boundary condition, we see that $0=v_j(1)=1-u^*({\Lambda_j})$.
By the uniqueness of $r^*_0$ we have that ${\Lambda_1}={\Lambda_2}=r^*_0$.
Since $u^*(r)=1-v_j(\Lambda_j^{-1}r)$, we see that $v_1(s)=v_2(s)$ which shows the uniqueness of a degenerate solution to \eqref{v}.

We next establish (\ref{C0E0}).
Let $\delta>0$ be fixed. By \eqref{C0E00} and assumption (G4) there exists $r_{\delta}^*>0$ such that $u^*(r_{\delta}^*)=1+\delta$. For $\tau\in (0,1)$, let $u(\cdot, 1-\tau)$ be the solution of (\ref{S1E4}) with $a=1-\tau$. 
Let $r_0(\tau)>0$ be the first positive zero of the function $1-u(\,\cdot\,,1-\tau)$;
note that $r_0(\tau)$ exists due to the convergence (\ref{S1Econv}).
Then 
$$
(\lambda(\tau),v(s,\tau)):=\big(r_0(\tau)^{\gamma-\alpha+\beta},1-u(r_0(\tau)s,1-\tau)\big)
$$ 
is a solution of (\ref{v}).
If $\tau$ is close to $1$, then the solution $u(\,\cdot\,,1-\tau)$ exists in $[0,r_{\delta}^*]$ and
\begin{equation}\label{C0E1}
u(r,1-\tau)\to u^*(r)\ \ \textrm{in}\ \ C[0,r_{\delta}^*]\ \ \textrm{and}\ \ \tau\to 1.
\end{equation}
Since $1-u^*(r_{\delta}^*)<0$, we see that $r_0(\tau)<r_{\delta}^*$ provided that $\tau$ is close to $1$.
By (\ref{C0E1}) we have $r_0(\tau)\to r_0^*$ as $\tau\to 1$.
Since $\bar{\lambda}=(r_0^*)^{\gamma-\alpha+\beta}$ and $\lambda(\tau)=r_0(\tau)^{\gamma-\alpha+\beta}$, we see that $\lambda(\tau)\to\bar{\lambda}$ as $\tau\to 1$ and thus
\begin{equation}\label{C0E2}
|u^*(\bar{\lambda}^{1/(\gamma-\alpha+\beta)}s)-u^*(\lambda(\tau)^{1/(\gamma-\alpha+\beta)}s)|\to 0\ \ \textrm{in}\ \ C[0,1]\ \ \textrm{as}\ \ \tau\to 1.
\end{equation}
By Theorem~\ref{THA}~(i) we have
\begin{equation}\label{C0E3}
|u(\lambda(\tau)^{1/(\gamma-\alpha+\beta)}s,1-\tau)-u^*(\lambda(\tau)^{1/(\gamma-\alpha+\beta)}s)|\to 0\ \ \textrm{in}\ \ C[0,1]\ \ \textrm{as}\ \ \tau\to 1.
\end{equation}
By (\ref{C0E2}) and (\ref{C0E3}) we have
\begin{align*}
|v(s,\tau)-\bar{v}(s)|
&\le|u^*(\bar{\lambda}^{1/(\gamma-\alpha+\beta)}s)-u^*(\lambda(\tau)^{1/(\gamma-\alpha+\beta)}s)|\\
&\quad +|u^*(\lambda(\tau)^{1/(\gamma-\alpha+\beta)}s)-u(\lambda(\tau)^{1/(\gamma-\alpha+\beta)}s,1-\tau)|\\
&\to 0\ \ \textrm{in}\ \ C[0,1]\ \ \textrm{as}\ \ \tau\to 1.
\end{align*}
This finishes our proof. 
\end{proof}

\begin{proof}[Proof of Corollary~\ref{C0}~(ii)]
The proof is inspired from \cite[Lemma~8.1]{M18a}.
However, the situation is diffrent here, since  solutions to (\ref{v}) may be degenerate.

Let $U(r,a):=v(s,a)$ and $r:=\lambda^{1/(\gamma-\alpha+\beta)}s$.
Then, $U$ satisfies
\begin{equation}\label{C0PE1}
\begin{cases}
r^{-(\gamma-1)}(r^{\alpha}|U'|^{\beta-1}U')'+\frac{1}{f(1-U)}=0 & 0<r<\lambda^{1/(\gamma-\alpha+\beta)},\\
U(0,a)=1-a,\\
U(r_0(a),a)=0.
\end{cases}
\end{equation}
Let $U^*(r)$ denote the unique degenerate solution of the equation in (\ref{C0PE1}) such that $U^*(0)=1$ and $(U^*)'(r)<0$ for $r>0$.
The existence of $U^*$ is guaranteed by Theorem~\ref{THA}~(i).
Let $r_0(a)$ and $r_0^*$ denote the first zero of $U(\,\cdot\,,a)$ and $U^*(\,\cdot\,)$, respectively. 
Then, $\lambda(a)$ and $\bar{\lambda}$ are given by
\[
\lambda(a):=r_0(a)^{\gamma-\alpha+\beta}\quad\textrm{and}\quad\bar{\lambda}:=(r_0^*)^{\gamma-\alpha+\beta}.
\]

Let $I:=[0,\min\{r_0(a),r_0^*\}]$ and set
$z(a):=\calZ_I[U(\,\cdot\,,a)-U^*(\,\cdot\,)]$.
For each fixed $a\in (0,1)$, there is a neighborhood of $r=0$ such that $U(\,\cdot\,,a)$ and $U^*(\,\cdot\,)$ has no intersection in the heighborhood, since $U(0,a)<1=U^*(0)$.
The equation in (\ref{C0PE1}) is equivalent to 
$$
\beta U''+\frac{\alpha}{r}U'+\frac{{r^{\gamma-\alpha-1}}}{f(1-U)|U'|^{\beta-1}}=0.
$$
Since $(U^*)'<0$ for $r>0$ by Lemma~\ref{S4L1} and $U'<0$ for $r>0$ by Lemma~\ref{S3L2}, the equation (\ref{C0PE1}) is nondegenerate outside the neighborhood of $r=0$.
Therefore, for each fixed $a\in (0,1)$, the zero set $\{U(\,\cdot\,,a)-U^*(\,\cdot\,)=0\}$ does not have an accumulation point, and hence $z(a)<\infty$.
It is clear that $U(r,a)-U^*(r)$ is a $C^1$ function of $(r,a)$.
We see that each zero of $U(\,\cdot\,,a)-U^*(\,\cdot\,)$ is simple, because of the uniqueness of a solution of the ODE of second order
\begin{multline*}
\beta(U-U^*)''+\frac{\alpha}{r}(U-U^*)'
-\frac{r^{\gamma-\alpha-1}}{|U'|^{\beta-1}}\frac{1}{f(1-U)f(1-U^*)}\frac{f(1-U)-f(1-U^*)}{U-U^*}(U-U^*)\\
-\frac{r^{\gamma-\alpha-1}}{f(1-U^*)|U'|^{\beta-1}|(U^*)'|^{\beta-1}}\frac{|U'|^{\beta-1}-|(U^*)'|^{\beta-1}}{U'-(U^*)'}(U'-(U^*)')=0.
\end{multline*}
It follows from the implicit function theorem that each zero of $U(\,\cdot\,,a)-U^*(\,\cdot\,)$ continuously depends on $a$.
Because $z(a)$ does not change in a neighborhood of each fixed $a$, $z(a)$ is conserved if another zero does not enter $I$ from $\partial I$ or goes out of $I$.
Here, the boundedness of $z(a)$ is used to guarantee the local conservation of $z(a)$.
We prove the conclusion by contradiction.
Suppose that there is $0<a_0<1$ such that $\lambda(a)<\bar{\lambda}$ for all $a\in(a_0,1)$.
Let $\tilde{r}:=\min\{r_0(a),r_0^*\}$.
Since $U(0,a)-U^*(0)<0$, it follows that $U(\tilde{r},a)-U^*({\tilde{r}})<0$.
Thus, another zero cannot enter or go out.
Hence $z(a)$ is bounded for $a\in (a_0,1)$ which
contradicts Theorem~\ref{THA}~(iv).
Similarly, we obtain a contradiction in the case where $\lambda(a)>\bar{\lambda}$ for $a\in (a_0,1)$.
Thus, $\lambda(a)$ has to oscillate infinitely many times around $\bar{\lambda}$ as $a\to 1$.
\end{proof}


\end{document}